\DeclareMathOperator*{\E}{\mathbb{E}}
\DeclareMathOperator*{\bP}{\mathbb{P}}
\newtheorem{theorem}{Theorem}[section]
\newtheorem{lemma}[theorem]{Lemma}
\newtheorem{proposition}[theorem]{Proposition}
\newtheorem{corollary}[theorem]{Corollary}
\newtheorem{example}[theorem]{Example}
\title{Uniform Error Estimates for the Lanczos Method}
\keywords{symmetric eigenvalue problem, Krylov subspace method, Lanczos method}
\subjclass{15A18, 65F15, 65F50}
\author{John C. Urschel} 
\address{Department of Mathematics, Massachusetts Institute of Technology, Cambridge, MA, USA.}
\email{urschel@mit.edu}
\begin{document}

\begin{abstract}
 The Lanczos method is one of the most powerful and fundamental techniques for solving an extremal symmetric eigenvalue problem. Convergence-based error estimates depend heavily on the eigenvalue gap. In practice, this gap is often relatively small, resulting in significant overestimates of error. One way to avoid this issue is through the use of uniform error estimates, namely, bounds that depend only on the dimension of the matrix and the number of iterations.  In this work, we prove explicit upper and lower uniform error estimates for the Lanczos method. These lower bounds, paired with numerical results, imply that the maximum error of $m$ iterations of the Lanczos method over all $n \times n$ symmetric matrices does indeed depend on the dimension $n$ in practice. The improved bounds for extremal eigenvalues translate immediately to error estimates for the condition number of a symmetric positive definite matrix. In addition, we prove more specific results for matrices that possess some level of eigenvalue regularity or whose eigenvalues converge to some limiting empirical spectral distribution. Through numerical experiments, we show that the theoretical estimates of this paper do apply to practical computations for reasonably sized matrices.
\end{abstract}

\maketitle

\section{Introduction}

The computation of extremal eigenvalues of matrices is one of the most fundamental problems in numerical linear algebra. When a matrix is large and sparse, methods such as the Jacobi eigenvalue algorithm and QR algorithm become computationally infeasible, and, therefore, techniques that take advantage of the sparsity of the matrix are required. Krylov subspace methods are a powerful class of techniques for approximating extremal eigenvalues, most notably the Arnoldi iteration for non-symmetric matrices and the Lanczos method for symmetric matrices. 

The Lanczos method is a technique that, given a symmetric matrix $A \in \mathbb{R}^{n \times n}$ and an initial vector $b \in \mathbb{R}^n$, iteratively computes a tridiagonal matrix $T_m \in \mathbb{R}^{m \times m}$ that satisfies $T_m = Q^T_m A Q_m$, where $Q_m \in \mathbb{R}^{n \times m}$ is an orthonormal basis of the Krylov subspace
$$ \mathcal{K}_m(A,b) = \text{span}\{b, Ab, ..., A^{m-1} b\} .$$
The eigenvalues of $T_m$, denoted by $\lambda^{(m)}_1(A,b) \ge ... \ge \lambda^{(m)}_m(A,b)$, are the Rayleigh-Ritz approximations to the eigenvalues $\lambda_1(A) \ge ... \ge \lambda_n(A)$ of $A$ on $\mathcal{K}_m(A,b)$, and, therefore, are given by
\begin{eqnarray}\label{eqn:minmax}
\lambda_i^{(m)}(A,b) &=& \min_{\substack{U \subset \mathcal{K}_m(A,b)\\ \text{dim}(U) = m+1-i}} \, \max_{\substack{x \in U \\ x \ne 0}} \, \frac{x^T A x}{x^T x}, \qquad \qquad i = 1,...,m,
\end{eqnarray} 
or, equivalently,
\begin{eqnarray}\label{eqn:maxmin}
\lambda_i^{(m)} (A,b)  &=& \max_{\substack{U \subset \mathcal{K}_m(A,b)\\ \text{dim}(U) = i}} \, \min_{\substack{x \in U \\ x \ne 0}} \, \frac{x^T A x}{x^T x}, \qquad \qquad i = 1,...,m.
\end{eqnarray}

This description of the Lanczos method is sufficient for a theoretical analysis of error (i.e., without round-off error), but, for completeness, we provide a short description of the Lanczos method (when $\mathcal{K}_m(A,b)$ is full-rank) in Algorithm \ref{ag:lan} \cite{trefethen1997numerical}. For a more detailed discussion of the nuances of practical implementation and techniques to minimize the effects of round-off error, we refer the reader to \cite[Section 10.3]{golub2012matrix}. If $A$ has $\nu n$ non-zero entries, then Algorithm \ref{ag:lan} outputs $T_m$ after approximately $(2 \nu +8) m n$ floating point operations. A number of different techniques, such as the divide-and-conquer algorithm with the fast multipole method \cite{coakley2013fast}, can easily compute the eigenvalues of a tridiagonal matrix. The complexity of this computation is negligible compared to the Lanczos algorithm, as, in practice, $m$ is typically orders of magnitude less than $n$.

Equation (\ref{eqn:minmax}) for the Ritz values $\lambda_i^{(m)}$ illustrates the significant improvement that the Lanczos method provides over the power method for extremal eigenvalues. Whereas the power method uses only the iterate $A^{m} b$ as an approximation of the largest magnitude eigenvalue, the Lanczos method uses the span of all of the iterates of the power method (given by $\mathcal{K}_{m+1}(A,b)$). However, the analysis of the Lanczos method is significantly more complicated than that of the power method. Error estimates for extremal eigenvalue approximation using the Lanczos method have been well studied, most notably by Kaniel \cite{kaniel1966estimates}, Paige \cite{paige1971computation}, Saad \cite{saad1980rates}, and Kuczynski and Wozniakowski \cite{kuczynski1992estimating} (other notable work includes \cite{del1997randomized,drineas2018structural,kuczynski1994probabilistic,musco2015randomized,parlett1982estimating,simchowitz2017gap,simchowitz2018tight,van2001computing}). The work of Kaniel, Paige, and Saad focused on the convergence of the Lanczos method as $m$ increases, and, therefore, their results have strong dependence on the spectrum of the matrix $A$ and the choice of initial vector $b$.
For example, a standard result of this type is the estimate
\begin{equation} \label{eqn:kp}
\frac{\lambda_1(A) - \lambda_1^{(m)}(A,b)}{\lambda_1(A) - \lambda_n(A)} \le \left( \frac{\tan \angle(b, \varphi_1)}{T_{m-1}(1 + 2 \gamma)} \right)^2, \qquad   \quad \gamma = \frac{\lambda_1 (A) - \lambda_2(A)}{\lambda_2(A) - \lambda_n(A)},
\end{equation}
where $\varphi_1$ is the eigenvector corresponding to $\lambda_1$, and $T_{m-1}$ is the $(m-1)^{th}$ degree Chebyshev polynomial of the first kind \cite[Theorem 6.4]{saad2011numerical}.
Kuczynski and Wozniakowski took a quite different approach, and estimated the maximum expected relative error $(\lambda_1 - \lambda_1^{(m)})/\lambda_1$ over all $n \times n$ symmetric positive definite matrices, resulting in error estimates that depend only on the dimension $n$ and the number of iterations $m$. They produced the estimate
\begin{equation}\label{eqn:kw}
\sup_{A \in \mathcal{S}_{++}^n} \E_{b \sim \mathcal{U}(S^{n-1})} \left[ \frac{\lambda_1(A) - \lambda_1^{(m)}(A,b)}{\lambda_1(A)} \right] \le .103 \, \frac{ \ln^2 (n (m-1)^4)}{(m-1)^2},
\end{equation}
for all $n \ge 8$ and $m \ge 4$, where $\mathcal{S}_{++}^n$ is the set of all $n \times n$ symmetric positive definite matrices, and the expectation is with respect to the uniform probability measure on the hypersphere $S^{n-1}= \{ x \in \mathbb{R}^n \, | \, \| x \| = 1 \}$ \cite[Theorem 3.2]{kuczynski1992estimating}. One can quickly verify that equation (\ref{eqn:kw}) also holds when the $\lambda_1(A)$ term in the denominator is replaced by $\lambda_1(A)-\lambda_n(A)$, and the supremum over $\mathcal{S}_{++}^n$ is replaced by the maximum over the set of all $n\times n$ symmetric matrices, denoted by $\mathcal{S}^n$.

Both of these approaches have benefits and drawbacks. If an extremal eigenvalue is known to have a reasonably large eigenvalue gap (based on application or construction), then a distribution dependent estimate provides a very good approximation of error, even for small $m$. However, if the eigenvalue gap is not especially large, then distribution dependent estimates can significantly overestimate error, and estimates that depend only on $n$ and $m$ are preferable. This is illustrated by the following elementary, yet enlightening, example.

\begin{algorithm}[!tb]
	\caption{Lanczos Method \label{ag:lan}}
	\begin{enumerate}
		\item[] { \bf Input:} symmetric matrix $A \in \mathbb{R}^{n \times n}$, vector $b \in \mathbb{R}^n$, number of iterations $m$.
		\item[] { \bf Output:} symmetric tridiagonal matrix $T_m \in \mathbb{R}^{m \times m}$, $T_m(i,i) = \alpha_i$, 
		\item[] $ \qquad \qquad \,$ $T_m(i,i+1) = \beta_i$, satisfying $T_m = Q_m^T A Q_m$, where $Q_m = [q_1 \; ... \; q_m]$.
		\item[]
		\item[] { \bf Set} $\beta_0=0$, $q_0 = 0$, $q_1 = b/\| b \|$
		\item[] {\bf For} $i = 1,...,m$ 
		\begin{enumerate}
			\item[] $v = A q_i$
			\item[] $\alpha_i = q_i^T v$
			\item[] $v = v - \alpha_i q_i -\beta_{i-1} q_{i-1} $
			\item[] $\beta_i = \|v\|$
			\item[] $q_{i+1} = v/\beta_i$
		\end{enumerate}
		
	\end{enumerate}
\end{algorithm}

\begin{example}\label{ex:tridiag}
	Let $A \in S_{++}^n$ be the tridiagonal matrix resulting from the discretization of the Laplacian operator on an interval with Dirichlet boundary conditions, namely, $A_{i,i} = 2$ for $i=1,...,n$ and $A_{i,i+1} = A_{i+1,i} = -1$ for $i =1,...,n-1$. The eigenvalues of $A$ are given by $\lambda_i(A) = 2 + 2 \cos \left( i \pi /(n+1) \right)$, $i=1,...,n$. Consider the approximation of $\lambda_1(A)$ by $m$ iterations of the Lanczos method. For a random choice of $b$, the expected value of $\tan^2 \angle(b, \varphi_1)$ is $(1+o(1)) n$. If $\tan^2 \angle(b, \varphi_1) = C n$ for some constant $C$, then (\ref{eqn:kp}) produces the estimate
	\begin{align*}
	\frac{\lambda_1(A) - \lambda_1^{(m)}(A,b)}{\lambda_1(A) - \lambda_n(A)} &\le C n \, T_{m-1} \left(1 + 2 \tan \left( \frac{\pi}{2(n+1)}\right) \tan \left( \frac{ 3\pi}{2(n+1)} \right) \right)^{-2} \\
	&\simeq n (1 + O(n^{-1}) )^{-m} \simeq n.
	\end{align*}
	In this instance, the estimate is a trivial one for all choices of $m$, which varies greatly from the error estimate (\ref{eqn:kw}) of order $\ln^2 n / m^2$. The exact same estimate holds for the smallest eigenvalue $\lambda_n(A)$ when the corresponding bounds are applied, since $4I - A$ is similar to $A$.
	
	Now, consider the approximation of the largest eigenvalue of $B = A^{-1}$ by $m$ iterations of the Lanczos method. The matrix $B$ possesses a large gap between the largest and second-largest eigenvalue, which results in a value of $\gamma$ for (\ref{eqn:kp}) that remains bounded below by a constant independent of $n$, namely 
	$$\gamma = \left(2 \cos \left( \pi/(n+1) \right) + 1 \right) / \left(2 \cos \left( \pi/(n+1) \right) - 1 \right).$$
	Therefore, in this instance, the estimate (\ref{eqn:kp}) illustrates a constant convergence rate, produces non-trivial bounds (for typical $b$) for $m = \Theta(\ln n)$, and is preferable to the error estimate (\ref{eqn:kw}) of order $\ln^2 n / m^2$.
\end{example}

More generally, if a matrix $A$ has eigenvalue gap $\gamma \lesssim n^{-\alpha}$ and the initial vector $b$ satisfies $\tan^2 \angle (b, \varphi_1) \gtrsim n$, then the error estimate (\ref{eqn:kp}) is a trivial one for $m \lesssim n^{\alpha/2}$. This implies that the estimate (\ref{eqn:kp}) is most useful when the eigenvalue gap is constant or tends to zero very slowly. When the gap is not especially large (say, $n^{-\alpha}$, $\alpha$ constant), then uniform error estimates are preferable for small values of $m$. In this work, we focus on uniform bounds, namely, error estimates that hold uniformly over some large set of matrices (typically $\mathcal{S}^n_{++}$ or $\mathcal{S}^n$). We begin by recalling some of the key existing uniform error estimates for the Lanczos method.

\subsection{Related Work}
Uniform error estimates for the Lanczos method have been produced almost exclusively for symmetric positive definite matrices, as error estimates for extremal eigenvalues of symmetric matrices can be produced from estimates for $S_{++}^n$ relatively easily. The majority of results apply only to either $\lambda_1(A)$, $\lambda_n(A)$, or some function of the two (i.e., condition number). All estimates are probabilistic and take the initial vector $b$ to be uniformly distributed on the hypersphere. Here we provide a brief description of some key uniform error estimates previously produced for the Lanczos method.

In \cite{kuczynski1992estimating}, Kuczynski and Wozniakowski produced a complete analysis of the power method and provided a number of upper bounds for the Lanczos method. Most notably, they produced error estimate (\ref{eqn:kw}) and provided the following upper bound for the probability that the relative error $(\lambda_1 - \lambda_1^{(m)})/\lambda_1$ is greater than some value $\epsilon$:
\begin{equation} \label{eqn:kwprob}
\sup_{A \in \mathcal{S}_{++}^n} \bP_{b \sim \mathcal{U}(S^{n-1})} \left[ \frac{\lambda_1(A) - \lambda^{(m)}_1(A,b)}{\lambda_1(A)} > \epsilon   \right]  \le 1.648 \sqrt{n} e^{-\sqrt{\epsilon}(2m-1)}.
\end{equation}
However, the authors were unable to produce any lower bounds for (\ref{eqn:kw}) or (\ref{eqn:kwprob}), and stated that a more sophisticated analysis would most likely be required. In the same paper, they performed numerical experiments for the Lanczos method that produced errors of the order $m^{-2}$, which led the authors to suggest that the error estimate (\ref{eqn:kw}) may be an overestimate, and that the $\ln^2 n$ term may be unnecessary. 

In \cite{kuczynski1994probabilistic}, Kuczynski and Wozniakowski noted that the above estimates immediately translate to relative error estimates for minimal eigenvalues when the error $\lambda^{(m)}_m - \lambda_n$ is considered relative to $\lambda_1$ or $\lambda_1-\lambda_n$ (both normalizations can be shown to produce the same bound). However, we can quickly verify that there exist sequences in $\mathcal{S}^{n}_{++}$ for which the quantity $\E\big[( \lambda_m^{(m)} - \lambda_n)/\lambda_n \big]$ is unbounded. These results for minimal eigenvalues, combined with (\ref{eqn:kw}), led to error bounds for estimating the condition number of a matrix. Unfortunately, error estimates for the condition number face the same issue as the quantity $( \lambda_m^{(m)} - \lambda_n)/\lambda_n$, and therefore, only estimates that depend on the value of the condition number can be produced.

The proof technique used to produce (\ref{eqn:kw}) works specifically for the quantity $\E\big[(\lambda_1 - \lambda_1^{(m)})/\lambda_1 \big]$ (i.e., the $1$-norm), and does not carry over to more general $p$-norms of the form $\E\big[ ((\lambda_1 - \lambda_1^{(m)})/\lambda_1)^p \big]^{1/p}$, $p \in (1,\infty)$. Later, in \cite[Theorem 5.2, $r=1$]{del1997randomized}, Del Corso and Manzini produced an upper bound of the order $(\sqrt{n}/m)^{1/p}$ for arbitrary $p$-norms, given by
\begin{equation} \label{eqn:upperp}
\sup_{A \in \mathcal{S}_{++}^n} \E_{b \sim \mathcal{U}(S^{n-1})} \left[ \left( \frac{\lambda_1(A) - \lambda_1^{(m)}(A,b)}{\lambda_1(A)}\right)^p \right]^{\frac{1}{p}} \lesssim \frac{1}{m^{1/p}} \left(\frac{\Gamma\left(p-\tfrac{1}{2}\right)\Gamma\left(\tfrac{n}{2}\right)}{\Gamma(p)\Gamma\left(\tfrac{n-1}{2}\right)} \right)^{\frac{1}{p}} . 
\end{equation}
This bound is clearly worse than (\ref{eqn:kw}), and better bounds can be produced for arbitrary $p$ simply by making use of (\ref{eqn:kwprob}). Again, the authors were unable to produce any lower bounds.

More recently, the machine learning and optimization community has become increasingly interested in the problem of approximating the top singular vectors of a symmetric matrix by randomized techniques (for a review, see \cite{halko2011finding}). This has led to a wealth of new results in recent years regarding classical techniques, including the Lanczos method. In \cite{musco2015randomized}, Musco and Musco considered a block Krylov subspace algorithm similar to the block Lanczos method and showed that with high probability their algorithm achieves an error of $\epsilon$ in $m = O(\epsilon^{-1/2} \ln n)$ iterations, matching the bound \ref{eqn:kwprob} for a block size of one. Very recently, a number of lower bounds for a wide class of randomized algorithms were shown in \cite{simchowitz2017gap,simchowitz2018tight}, all of which can be applied to the Lanczos method as a corollary. First, the authors showed that the dependence on $n$ in the above upper bounds was in fact necessary. In particular, it follows from \cite[Theorem A.1]{simchowitz2018tight} that $O(\log n)$ iterations are necessary to obtain a non-trivial error. In addition, as a corollary of their main theorem \cite[Theorem 1]{simchowitz2018tight}, there exist $c_1,c_2 >0$ such that for all $\epsilon \in (0,1)$ there exists an $n_o = \text{poly}(\epsilon^{-1})$ such that for all $n \ge n_o$ there exists a random matrix $A \in \mathcal{S}^n$ such that
\begin{equation}\label{eqn:lwrbnd}
\mathbb{P} \left[ \frac{\rho(A) - \rho(T)}{\rho(A)} \ge \frac{\epsilon}{12} \right] \ge 1 - e^{-n^{c_2}} \qquad \text{ for all }  m \le c_1 \epsilon^{-1/2} \ln n,
\end{equation}
where $\rho (\cdot)$ is the spectral radius of a matrix.

\subsection{Contributions and Remainder of Paper}
In what follows, we prove improved upper bounds for the maximum expected error of the Lanczos method in the $p$-norm, $p \ge 1$, and combine this with nearly-matching asymptotic lower bounds with explicit constants. These estimates can be found in Theorem \ref{thm:main}. The upper bounds result from using a slightly different proof technique than that of $(\ref{eqn:kw})$, which is more robust to arbitrary $p$-norms. Comparing the lower bounds of Theorem \ref{thm:main} to the estimate (\ref{eqn:lwrbnd}), we make a number of observations. Whereas (\ref{eqn:lwrbnd}) results from a statistical analysis of random matrices, our estimates follow from taking an infinite sequence of non-random matrices with explicit eigenvalues and using the theory of orthogonal polynomials. Our estimate for $m = O(\ln n)$ is slightly worse than (\ref{eqn:lwrbnd}) by a factor of $\ln^2 \ln n$, but makes up for this in the form of an explicit constant. The estimate for $m = o(n^{1/2} \ln^{-1/2} n)$ does not have $n$ dependence, but illustrates a useful lower bound, as it has an explicit constant and the $\ln n$ term becomes negligible as $m$ increases. The results (\ref{eqn:lwrbnd}) do not fully apply to this regime, as the polynomial in this estimate is at least cubic (see \cite[Theorem 6.1]{simchowitz2018tight} for details).

To complement these bounds, we also provide an error analysis for matrices that have a certain structure. In Theorem \ref{thm:eigclust}, we produce improved dimension-free upper bounds for matrices that have some level of eigenvalue ``regularity" near $\lambda_1$. In addition, in Theorem \ref{thm:conv}, we prove a powerful result that can be used to determine, for any fixed $m$, the asymptotic relative error for any sequence of matrices $X_n \in \mathcal{S}^{n}$, $n = 1,2,...$, that exhibits suitable convergence of its empirical spectral distribution. Later, we perform numerical experiments that illustrate the practical usefulness of this result. Theorem \ref{thm:conv}, combined with estimates for Jacobi polynomials (see Proposition \ref{prop:jac2}), illustrates that the inverse quadratic dependence on the number of iterations $m$ in the estimates produced throughout this paper does not simply illustrate the worst case, but is actually indicative of the typical case in some sense.

In Corollary \ref{cor:eig_lb} and Theorem \ref{thm:arb_eig}, we produce results similar to Theorem \ref{thm:main} for arbitrary eigenvalues $\lambda_i$. The lower bounds follow relatively quickly from the estimates for $\lambda_1$, but the upper bounds require some mild assumptions on the eigenvalue gaps of the matrix. These results mark the first uniform-type bounds for estimating arbitrary eigenvalues by the Lanczos method. In addition, in Corollary \ref{cor:cond_exp}, we translate our error estimates for the extremal eigenvalues $\lambda_1(A)$ and $\lambda_n(A)$ into error bounds for the condition number of a symmetric positive definite matrix, but, as previously mentioned, the relative error of the condition number of a matrix requires estimates that depends on the condition number itself. Finally, we present numerical experiments that support the accuracy and practical usefulness of the theoretical estimates detailed above.

The remainder of the paper is as follows. In Section 2, we prove basic results regarding relative error and make a number of fairly straightforward observations. In Section 3, we prove asymptotic lower bounds and improved upper bounds for the relative error in an arbitrary $p$-norm. In Section 4, we produce a dimension-free error estimate for a large class of matrices and prove a theorem that can be used to determine the asymptotic relative error for any fixed $m$ and sequence of matrices $X_n \in \mathcal{S}^{n}$, $n = 1,2,...,$ with suitable convergence of its empirical spectral distribution. In Section 5, under some mild additional assumptions, we prove a version of Theorem \ref{thm:main} for arbitrary eigenvalues $\lambda_i$, and extend our results for $\lambda_1$ and $\lambda_n$ to the condition number of a symmetric positive definite matrix. Finally, in Section 6, we perform a number of experiments and discuss how the numerical results compare to the theoretical estimates in this work.

\section{Preliminary Results}
Because the Lanczos method applies only to symmetric matrices, all matrices in this paper are assumed to belong to $\mathcal{S}^n$. The Lanczos method and the quantity $(\lambda_1(A) - \lambda_1^{(m)}(A,b))/(\lambda_1(A) -\lambda_n(A))$ are both unaffected by shifting and scaling, and so any maximum over $A \in \mathcal{S}^n$ can be replaced by a maximum over all $A \in \mathcal{S}^n$ with $\lambda_1(A)$ and $\lambda_n(A)$ fixed. Often for producing upper bounds, it is convenient to choose $\lambda_1(A) =1$ and $\lambda_n(A) = 0$. For the sake of brevity, we will often write $\lambda_i(A)$ and $\lambda_j^{(m)}(A,b)$ as $\lambda_i$ and $\lambda_j^{(m)}$ when the associated matrix $A$ and vector $b$ are clear from context.

We begin by rewriting expression $(\ref{eqn:maxmin})$ for $\lambda_1^{(m)}$ in terms of polynomials. The Krylov subspace $\mathcal{K}_m(A,b)$ can be alternatively defined as
$$ \mathcal{K}_m(A,b) = \{ P(A) b \, | \, P \in \mathcal{P}_{m-1} \},$$
where $\mathcal{P}_{m-1}$ is the set of all real-valued polynomials of degree at most $m-1$. Suppose $A \in \mathcal{S}^n$ has eigendecomposition $A = Q \Lambda Q^T$, where $Q \in \mathbb{R}^{n\times n}$ is an orthogonal matrix and $\Lambda \in \mathbb{R}^{n \times n}$ is the diagonal matrix satisfying $\Lambda(i,i) = \lambda_i(A)$, $i = 1,...,n$. Then we have the relation
$$ \lambda_1^{(m)}(A,b) = \max_{\substack{x \in \mathcal{K}_{m}(A,b) \\ x \ne 0}} \frac{x^T A x}{x^T x} = \max_{\substack{P \in \mathcal{P}_{m-1} \\ P \ne 0}} \frac{b^T P^2(A) A b }{b^T P^2(A) b}  = \max_{\substack{P \in \mathcal{P}_{m-1} \\ P \ne 0}} \frac{\tilde b^T P^2(\Lambda) \Lambda \tilde b}{ \tilde b^T P^2(\Lambda) \tilde b},$$
where $\tilde b = Q^T b$. The relative error is given by
$$\frac{\lambda_1(A) - \lambda_1^{(m)}(A,b)}{\lambda_1(A) -\lambda_n(A)}  = \min_{\substack{P \in \mathcal{P}_{m-1} \\ P \ne 0}} \frac{\sum_{i=2}^n \tilde b_i^2 P^2(\lambda_i) (\lambda_1-\lambda_i)}{(\lambda_1-\lambda_n)\sum_{i=1}^n \tilde b_i^2 P^2(\lambda_i)},$$
and the expected $p^{th}$ moment of the relative error is given by
$$\E_{b \sim \mathcal{U}(S^{n-1})} \left[ \left( \frac{\lambda_1 - \lambda_1^{(m)}}{\lambda_1 - \lambda_n}\right)^p \right] = \int_{S^{n-1}}  \min_{\substack{P \in \mathcal{P}_{m-1} \\ P \ne 0}}  \left[ \frac{\sum_{i=2}^n \tilde b_i^2 P^2(\lambda_i) (\lambda_1-\lambda_i)}{(\lambda_1-\lambda_n)\sum_{i=1}^n \tilde b_i^2 P^2(\lambda_i)} \right]^p \, d\sigma(\tilde b),$$
where $\sigma$ is the uniform probability measure on $S^{n-1}$. Because the relative error does not depend on the norm of $\tilde b$ or the sign of any entry, we can replace the integral over $S^{n-1}$ by an integral of $y = (y_1,...,y_n)$ over $[0,\infty)^n$ with respect to the joint chi-square probability density function 
\begin{equation}\label{eqn:chidensity}
f_Y(y) = \frac{1}{(2\pi)^{n/2}} \exp\left\{ -\frac{1}{2} \sum_{i=1}^n y_i \right\} \prod_{i=1}^n y_i^{-\tfrac{1}{2}}
\end{equation}
of $n$ independent chi-square random variables $Y_1,...,Y_n \sim \chi^2_1$ with one degree of freedom each. In particular, we have
\begin{equation}\label{eqn:chi}
\E_{b \sim \mathcal{U}(S^{n-1})} \left[ \left( \frac{\lambda_1 - \lambda_1^{(m)}}{\lambda_1-\lambda_n}\right)^p \right] = \int_{[0,\infty)^n}  \min_{\substack{P \in \mathcal{P}_{m-1} \\ P \ne 0}} \left[ \frac{\sum_{i=2}^n y_i P^2(\lambda_i) (\lambda_1-\lambda_i)}{(\lambda_1-\lambda_n)\sum_{i=1}^n y_i P^2(\lambda_i) } \right]^p f_Y(y) \, dy.
\end{equation}
Similarly, probabilistic estimates with respect to $b \sim \mathcal{U}(S^{n-1})$ can be replaced by estimates with respect to $Y_1,...,Y_n \sim \chi^2_1$, as
\begin{equation}\label{eqn:chi_prob}
\bP_{b \sim \mathcal{U}(S^{n-1})} \left[\frac{\lambda_1 - \lambda_1^{(m)}}{\lambda_1 -\lambda_n} \ge \epsilon \right]  = \bP_{Y_i \sim \chi^2_1} \left[ \min_{\substack{P \in \mathcal{P}_{m-1} \\ P \ne 0}}   \frac{\sum_{i=2}^n Y_i P^2(\lambda_i) (\lambda_1-\lambda_i)}{(\lambda_1-\lambda_n)\sum_{i=1}^n Y_i P^2(\lambda_i) }  \ge \epsilon \right].
\end{equation}
For the remainder of the paper, we almost exclusively use equation (\ref{eqn:chi}) for expected relative error and equation (\ref{eqn:chi_prob}) for probabilistic bounds for relative error. If $P$ minimizes the expression in equation (\ref{eqn:chi}) or (\ref{eqn:chi_prob}) for a given $y$ or $Y$, then any polynomial of the form $\alpha P$, $\alpha \in \mathbb{R}\backslash\{0\}$, is also a minimizer. Therefore, without any loss of generality, we can alternatively minimize over the set $\mathcal{P}_{m-1}(1)= \{ Q \in \mathcal{P}_{m-1} \, | \, Q(1) = 1 \}$. For the sake of brevity, we will omit the subscripts under $\mathbb{E}$ and $\mathbb{P}$ when the underlying distribution is clear from context.
In this work, we make use of asymptotic notation to express the limiting behavior of a function with respect to $n$. A function $f(n)$ is $O(g(n))$ if there exists $M,n_0>0$ such that $|f(n)| \le M g(n)$ for all $n \ge n_0$, $o(g(n))$ if for every $\epsilon>0$ there exists a $n_\epsilon$ such that $|f(n)| \le \epsilon g(x)$ for all $n \ge n_0$, $\omega(g(n))$ if $|g(n)| =o(|f(n)|)$, and $\Theta(g(n))$ if $f(n) =O(g(n))$ and $g(n) = O(f(n))$.


\section{Asymptotic Lower Bounds and Improved Upper Bounds}

In this section, we obtain improved upper bounds for $\E\big[ ((\lambda_1 - \lambda_1^{(m)})/\lambda_1)^p \big]^{1/p}$, $p \ge 1$, and produce nearly-matching lower bounds. In particular, we prove the following theorem.

\begin{theorem}\label{thm:main}
	$$\max_{A \in \mathcal{S}^n}  \bP_{b \sim \mathcal{U}(S^{n-1})} \left[  \frac{\lambda_1(A) - \lambda_1^{(m)}(A,b)}{\lambda_1(A)-\lambda_n(A)} \ge   1-o(1) \right]  \ge 1 - o(1/n)$$
	for $m = o(\ln n)$,
	$$\max_{A \in \mathcal{S}^n}  \bP_{b \sim \mathcal{U}(S^{n-1})} \left[  \frac{\lambda_1(A) - \lambda_1^{(m)}(A,b)}{\lambda_1(A)-\lambda_n(A)} \ge   \frac{.015 \ln^2 n}{ m^2 \ln^2 \ln n} \right]  \ge 1 - o(1/n)$$
	for $m = \Theta(\ln n)$,
	$$ \max_{A \in \mathcal{S}^n}  \bP_{b \sim \mathcal{U}(S^{n-1})} \left[  \frac{\lambda_1(A) - \lambda_1^{(m)}(A,b)}{\lambda_1(A)-\lambda_n(A)} \ge \frac{1.08}{m^2} \right]  \ge 1 - o(1/n)$$ for $m = o\left(n^{1/2} \ln^{-1/2} n\right)$ and $\omega(1)$,
	and
	$$ \max_{A \in \mathcal{S}^n} \E_{b \sim \mathcal{U}(S^{n-1})} \left[ \left( \frac{\lambda_1(A) - \lambda_1^{(m)}(A,b)}{\lambda_1(A) -\lambda_n(A)}\right)^p \right]^{1/p} \le .068 \,  \frac{ \ln^2 \left(n(m-1)^{8p}\right) }{ (m-1)^2}$$
	for $n\ge 100$, $m \ge 10$, $p \ge 1$.
\end{theorem}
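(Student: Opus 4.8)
\emph{Overview.} The statement bundles one upper bound (the last display) with three lower bounds, and both halves run through the chi-square reformulations (\ref{eqn:chi}) and (\ref{eqn:chi_prob}) of Section~2; throughout one normalizes $\lambda_1(A)=1$, $\lambda_n(A)=0$, so the relative error is $1-\lambda_1^{(m)}$.

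\emph{The upper bound.} The plan is to insert a single well-chosen test polynomial into the inner minimization of (\ref{eqn:chi}). Introduce a free ``pseudo-gap'' $s\in(0,1)$ and let $P$ be the degree-$(m-1)$ Chebyshev polynomial of the interval $[0,1-s]$ rescaled so that $P(1)=1$; then $|P|\le\eta:=1/T_{m-1}\big(\tfrac{1+s}{1-s}\big)\le 2e^{-2(m-1)\sqrt s}$ on $[0,1-s]$ and $0\le P\le1$ on $[1-s,1]$. Splitting the eigenvalues at $1-s$, using $1-\lambda_i\le1$ below the split and $1-\lambda_i<s$ above it, discarding the nonnegative ``far'' terms in the denominator, and applying the elementary inequality $\tfrac{a+b}{c+d}\le\tfrac ac+\tfrac bd$ together with the trivial bound $\le1$, one obtains the pointwise estimate $1-\lambda_1^{(m)}\le\min\big(1,\,\eta^2\sum_{i\ge2}Y_i/Y_1\big)+s$. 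Taking $p$-th moments and applying Minkowski, it remains to bound $\E[\min(1,Z)^p]$ for $Z=\eta^2\sum_{i\ge2}Y_i/Y_1$: the contribution of $\{Z\le1\}$ is at most $\E[Z^{1/4}]=\eta^{1/2}\,\E[(\sum_{i\ge2}Y_i)^{1/4}]\,\E[Y_1^{-1/4}]\le\eta^{1/2}(n-1)^{1/4}\cdot 2^{-1/4}\Gamma(1/4)/\sqrt\pi$ (Jensen on the $\chi^2_{n-1}$ factor, and an explicit finite negative moment of $\chi^2_1$), while $\bP(Z>1)$ is handled by conditioning on $\sum_{i\ge2}Y_i$ and using $\bP(Y_1<t)\le\sqrt{2t/\pi}$. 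Choosing $s=\tfrac1{16}\ln^2\big(n(m-1)^{8p}\big)/(m-1)^2$ makes $\eta\le 2\big(n(m-1)^{8p}\big)^{-1/2}$, hence $\eta^2(n-1)\le4(m-1)^{-8p}$, so the $Z$-part is $O\big((m-1)^{-2}\big)$ and the total bound is $s+O\big((m-1)^{-2}\big)$; the hypotheses $n\ge100$, $m\ge10$, $p\ge1$ are exactly what is needed to absorb the $O\big((m-1)^{-2}\big)$ remainder together with all numerical constants into the displayed coefficient $.068$.

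\emph{The lower bounds.} For each $n$ I construct an explicit $A_n\in\mathcal{S}^n$ with $\lambda_1=1$, $\lambda_n=0$, and the remaining eigenvalues placed so that their empirical distribution approximates a prescribed measure $\mu$ --- supported on a shrinking interval, on $[0,1-g_n]$, or on all of $[0,1]$, according to the regime. The device is a reduction from (\ref{eqn:chi_prob}) to a deterministic extremal problem for orthogonal polynomials: expanding $P^2(\lambda)$ and $P^2(\lambda)(1-\lambda)$ in monomials shows that the minimand in (\ref{eqn:chi_prob}) is a fixed rational function of the $O(m)$ weighted power sums $S_j=\sum_{i\ge2}Y_i\lambda_i^{\,j}$ and of $Y_1$; each $S_j$ concentrates around its mean $\sum_{i\ge2}\lambda_i^{\,j}$ by a Chernoff bound for sums of independent sub-exponential variables, and $Y_1\le\ln^2 n$, with both events of probability $1-o(1/n)$, so that on this event the relative error equals, up to factors $1+o(1)$, the deterministic quantity $\min_{P\in\mathcal{P}_{m-1},\,P(1)=1}\frac{\int P^2(\lambda)(1-\lambda)\,d\mu(\lambda)}{\int P^2(\lambda)\,d\mu(\lambda)}$ (test polynomials with large sup-norm on the spectrum only decrease the error, so one may restrict to a compact family and interchange the limit with the minimum). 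This quantity is a reproducing-kernel / Christoffel-function value of the measure $(1-\lambda)\,d\mu$ at the point $1$, and it is estimated via the theory of orthogonal polynomials and, for sharp constants, Proposition~\ref{prop:jac2}: for $m=o(\ln n)$ one takes $\mu$ concentrated in $[0,\delta_n]$ with $\delta_n\to0$ slowly, and a Chebyshev estimate forces $\lambda_1^{(m)}\le\delta_n+o(\delta_n)$, i.e.\ relative error $\ge1-o(1)$; for $m=\Theta(\ln n)$ one takes $\mu$ a Jacobi-type weight on $[0,1-g_n]$ with the gap $g_n$ tuned --- of order $\ln^2n/m^2$ up to the $\ln^{-2}\ln n$ factor that the available kernel estimates cost --- so that the Christoffel function of $(1-\lambda)\,d\mu$ at $1$ has order $n$, producing the constant $.015$; and for $\omega(1)=m=o(n^{1/2}\ln^{-1/2}n)$ one takes $\mu$ a Jacobi-type weight on the \emph{whole} interval $[0,1]$, so that (in the spirit of Theorem~\ref{thm:conv}, now made quantitative in $m$) $\lambda_1^{(m)}$ converges to the largest zero of the degree-$m$ orthogonal polynomial of $\mu$, which by the Mehler--Heine/Jacobi asymptotics of Proposition~\ref{prop:jac2} equals $1-(1.08+o(1))/m^2$; here $m=o(n^{1/2}\ln^{-1/2}n)$ is exactly what keeps the concentration error and the mass-$\Theta(Y_1)$ atom at $\lambda_1$ negligible on the $1/m^2$ scale, and $m=\omega(1)$ lets the $o(1)$ be absorbed inside $1.08/m^2$.

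\emph{Where the difficulty lies.} Once one sees the $\min(1,\cdot)+s$ decomposition, the upper bound is essentially bookkeeping with explicit constants. The real work is in the lower bounds, at two points: (a) making the power-sum concentration uniform over all degree-$(m-1)$ test polynomials while keeping the failure probability $o(1/n)$ as $m$ grows with $n$; and, more seriously, (b) pushing the orthogonal-polynomial asymptotics --- Christoffel functions and extreme zeros of Jacobi-type weights evaluated at or just outside the spectral edge, in the regime $m\to\infty$ rather than the classical fixed-degree one --- far enough, and uniformly enough in the parameters, to extract the explicit constants $1.08$ and $.015$. That is precisely the content Proposition~\ref{prop:jac2} must supply.
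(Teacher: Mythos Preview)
Your upper-bound argument is essentially the paper's: the same Chebyshev test polynomial on $[0,1-s]$ (the paper writes $[0,\beta]$), the same reduction to the finite fractional moment $\E[(\sum_{i\ge2} Y_i/Y_1)^{1/4}]$, and the same optimization in the gap parameter. The paper reaches the exponent $1/4$ via iterated Cauchy--Schwarz (choosing $q$ with $2p<2^q\le 4p$) rather than through your $\min(1,Z)$--split, but this is cosmetic and the constants line up.

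The lower bounds take a genuinely different technical route. You propose to spread the eigenvalues continuously according to a measure $\mu$ and to pass to the integral problem via \emph{moment-wise} concentration of the power sums $S_j=\sum_{i\ge2}Y_i\lambda_i^{\,j}$. The paper instead uses only $2m$ (respectively $k=m^{O(\ell)}$) \emph{distinct} eigenvalues, each repeated with multiplicity $\Theta(n/k)$, and does \emph{node-wise} concentration: every grouped variable $\hat Y_j$ is $\chi^2$ with many degrees of freedom, so with probability $1-o(1/n)$ all $\hat Y_j$ lie in a narrow window, after which the discrete Rayleigh quotient is compared to the quadrature-weighted one pointwise in $j$. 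For the $1.08/m^2$ bound the nodes are the zeros of the degree-$2m$ Legendre polynomial, so Gauss--Legendre quadrature is \emph{exact} on $P^2(x)(1-x)$ and the passage to $\int_{-1}^1$ incurs no error; the constant $1.08$ then comes from the Legendre-zero asymptotics of Proposition~\ref{prop:gquad}, not from Proposition~\ref{prop:jac2}. For the $.015\ln^2 n/(m^2\ln^2\ln n)$ bound the nodes are equispaced in a transformed variable, and a careful composite-quadrature error analysis (Proposition~\ref{prop:rsum}), together with a separate monotonicity argument near the right endpoint where the derivative blows up, replaces exactness; the polynomial $P$ is expanded in the Jacobi basis $P_r^{(\ell,0)}$ rather than in monomials. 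The first lower bound the paper does not prove at all but quotes from \cite{simchowitz2018tight}.

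What the paper's device buys is precisely the point you flag as difficulty~(a). With only $O(m)$ nodes and node-wise concentration one never writes $P^2$ as a sum of monomials, so the ill-conditioning of the Hankel moment matrix---which would make your moment-wise concentration non-uniform in $P$ once $m$ grows with $n$---is sidestepped entirely. Your route is in spirit that of Theorem~\ref{thm:conv}, which is stated and proved only for fixed $m$; to push it to $m\to\infty$ you would have to work in an orthogonal-polynomial basis and control the resulting Gram-matrix perturbations, which is essentially what Lemma~\ref{lm:lowerbound_mix} does anyway. So your sketch is not wrong, but the step you correctly identify as the hard one is exactly where the paper's argument and yours diverge, and you have not yet said how you would close it.
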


\begin{proof}
	The first result is a corollary of \cite[Theorem A.1]{simchowitz2018tight}, and the remaining results follow from Lemmas \ref{lm:msquared_lb}, \ref{lm:lowerbound_mix}, and \ref{lm:upper_rel}.
\end{proof}

By H\"older's inequality, the lower bounds in Theorem \ref{thm:main} also hold for arbitrary $p$-norms, $p \ge 1$. All results are equally applicable to $\lambda_n$, as the Krylov subspace is unaffected by shifting and scaling, namely $\mathcal{K}_m(A,b) = \mathcal{K}_m(\alpha A + \beta I , b)$ for all $\alpha \ne 0$.

We begin by producing asymptotic lower bounds. The general technique is as follows. We choose an infinite sequence of matrices $\{A_n\}_{n=1}^\infty$, $A_n \in \mathcal{S}^n$, treat $m$ as a function of $n$, and show that, as $n$ tends to infinity, for ``most" choices of an initial vector $b$, the relative error of this sequence of matrices is well approximated by an integral polynomial minimization problem for which bounds can be obtained. First, we recall a number of useful propositions regarding Gauss-Legendre quadrature, orthogonal polynomials, and Chernoff bounds for chi-square random variables.

\begin{proposition}{(\cite{forster1990estimates},\cite{tricomi50})} \label{prop:gquad}
	Let $P\in \mathcal{P}_{2k-1}$, $\{x_i\}_{i=1}^k$ be the zeros of the $k^{th}$ degree Legendre polynomial $P_k(x)$, $P_k(1) = 1$, in descending order ($x_1>...>x_k$), and $w_i = 2(1-x_i^2)^{-1} [P'_k(x_i)]^{-2}$, $i = 1,...,k$. Then
	\begin{enumerate}
		\item $ \displaystyle{ \int_{-1}^1 P(x) \, dx = \sum_{i=1}^k w_i P(x_i),}$
		\item $x_i = \left( 1 - \frac{1}{8k^2} \right) \cos \left( \frac{(4i-1) \pi}{4 k + 2} \right) + O(k^{-3})$, $\quad i = 1,...,k$, \medskip
		\item$ \displaystyle{ \frac{\pi}{k+\tfrac{1}{2}} \sqrt{1-x_1^2} \left(1 - \frac{1}{8(k+\tfrac{1}{2})^2(1-x_1^2)} \right) \le w_1 < ... < w_{\big\lfloor\tfrac{k+1}{2}\big\rfloor} \le \frac{\pi}{k+\tfrac{1}{2}}}$.
	\end{enumerate}
\end{proposition}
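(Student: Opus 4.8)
Proposition \ref{prop:gquad} bundles three standard facts about $k$-node Gauss–Legendre quadrature, and I would handle them in the order listed: the first is elementary and I would prove it outright, while the second and third are classical estimates that I would obtain by specializing the cited results of Tricomi and of Förster–Petras.

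For part (1) I would use the textbook argument for the exactness of Gaussian quadrature. Writing $\ell_i(x)=P_k(x)/[(x-x_i)P_k'(x_i)]$ for the degree-$(k-1)$ Lagrange polynomial at the nodes, the weights $w_i=\int_{-1}^1\ell_i(x)\,dx$ make the rule $\sum_i w_i P(x_i)$ reproduce $\int_{-1}^1 P\,dx$ for every $P\in\mathcal{P}_{k-1}$; that $w_i$ equals the stated closed form $2(1-x_i^2)^{-1}[P_k'(x_i)]^{-2}$ then follows from the standard evaluation of $\int_{-1}^1 P_k(x)/(x-x_i)\,dx$ together with the normalization $P_k(1)=1$ (which enters only through $P_k'$ and plays no further role). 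For a general $P\in\mathcal{P}_{2k-1}$, divide: $P=Q\,P_k/\gamma_k+R$ with $\deg Q,\deg R\le k-1$ and $\gamma_k$ the leading coefficient of $P_k$. Orthogonality of $P_k$ to $\mathcal{P}_{k-1}$ kills $\int_{-1}^1 QP_k\,dx$, the factor $P_k(x_i)=0$ kills $\sum_i w_i Q(x_i)P_k(x_i)$, and applying the degree-$(k-1)$ exactness to $R$ gives $\sum_i w_i P(x_i)=\sum_i w_i R(x_i)=\int_{-1}^1 R\,dx=\int_{-1}^1 P\,dx$. I expect nothing beyond routine bookkeeping here.

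For parts (2) and (3) I would quote the literature rather than redo the asymptotic analysis of the zeros of $P_k$. Part (2) is Tricomi's expansion of the Legendre zeros \cite{tricomi50}: it gives $x_i=\cos\theta_i$ with $\theta_i=\frac{(4i-1)\pi}{4k+2}$ up to a second-order correction and an $O(k^{-3})$ remainder uniform in $i$, and feeding this into the Taylor expansion of $\cos$ and collecting the $O(k^{-2})$ terms into a multiplicative factor produces the stated form $x_i=(1-\tfrac{1}{8k^2})\cos\frac{(4i-1)\pi}{4k+2}+O(k^{-3})$; the only care needed is that the remainder stay uniform up to the endpoints, where $\theta_i$ is small, and again that only the zeros of $P_k$ (not its normalization) matter. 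Part (3) — the strict increase $w_1<\dots<w_{\lfloor(k+1)/2\rfloor}$ of the first half of the Gauss weights, the upper bound $w_{\lfloor(k+1)/2\rfloor}\le\pi/(k+\tfrac{1}{2})$ on the central weight, and the sharp lower bound on the edge weight $w_1$ — is exactly the set of Gauss–Legendre weight estimates established by Förster and Petras \cite{forster1990estimates}, which I would cite verbatim. The only real obstacle is reconciliation: checking that the error term in (2), the index range $\lfloor(k+1)/2\rfloor$ in the monotonicity, and the precise $k+\tfrac{1}{2}$-normalized constants in (3) match the (sometimes differently phrased) statements in \cite{tricomi50} and \cite{forster1990estimates}; were one to insist on a self-contained proof of (2) and (3), the substantive work would be the Sturm-comparison/WKB analysis of the zeros and turning points of the Legendre differential equation.
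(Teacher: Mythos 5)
Your proposal is correct and matches the paper's treatment: the paper gives no proof of this proposition at all, simply quoting it from the cited references of Tricomi (for the zero asymptotics) and F\"orster--Petras (for the weight bounds), which is precisely what you do for parts (2) and (3). Your sketch of part (1) is the standard exactness-of-Gaussian-quadrature argument and is a correct, routine supplement to those citations.
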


\begin{proposition}{(\cite[Chapter 7.72]{szeg1939orthogonal}, \cite{tchebichef1883}) }\label{prop:intmax}
	Let $\omega(x) \, dx$ be a measure on $[-1,1]$ with infinitely many points of increase, with orthogonal polynomials $\{p_k(x)\}_{k\ge 0}$, $p_k \in \mathcal{P}_k$. Then
	$$ \max_{\substack{P \in \mathcal{P}_{k} \\ P \ne 0}} \frac{\int_{-1}^1 x P^2(x) \, \omega(x) \, dx }{\int_{-1}^1 P^2(x) \, \omega(x) \, dx } = \max \big\{x \in [-1,1] \, \big| \, p_{k+1}(x) = 0 \big\}.$$
\end{proposition}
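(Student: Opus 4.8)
The plan is to reduce the claimed identity to the basic facts of Gaussian quadrature for the measure $\omega(x)\,dx$: the $(k+1)$-point Gauss rule has nodes exactly at the zeros $x_0 > x_1 > \dots > x_k$ of $p_{k+1}$, its weights $w_0,\dots,w_k$ are strictly positive, and it integrates every polynomial of degree at most $2k+1$ exactly (the general-weight analogue of Proposition \ref{prop:gquad}(1), found in \cite[Chapter 7.72]{szeg1939orthogonal}). Writing $\xi := x_0 = \max\{x \in [-1,1] \mid p_{k+1}(x) = 0\}$, I would then establish the two inequalities separately; each is a short averaging argument.

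For the bound $\le$, let $P \in \mathcal{P}_k$ be nonzero. Then $P^2 \in \mathcal{P}_{2k}$ and $xP^2 \in \mathcal{P}_{2k+1}$, so the Gauss rule evaluates both integrals exactly and
$$\frac{\int_{-1}^1 x P^2(x)\,\omega(x)\,dx}{\int_{-1}^1 P^2(x)\,\omega(x)\,dx} = \frac{\sum_{i=0}^k w_i\, x_i\, P^2(x_i)}{\sum_{i=0}^k w_i\, P^2(x_i)}.$$
A nonzero $P \in \mathcal{P}_k$ has at most $k$ roots and therefore cannot vanish at all $k+1$ nodes, so the denominator is a sum of nonnegative terms, not all zero, hence positive; the right-hand side is then a genuine weighted average of the $x_i$ and is at most $\max_i x_i = \xi$. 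For attainment ($\ge$), I would take $P(x) = \prod_{i=1}^k (x - x_i)$, the degree-$k$ polynomial vanishing at every Gauss node except $\xi = x_0$. Then $P^2(x_i) = 0$ for $i \ge 1$ while $P^2(\xi) > 0$, so, again using exactness of the quadrature on $P^2$ and on $xP^2$ together with $w_0 > 0$,
$$\frac{\int_{-1}^1 x P^2(x)\,\omega(x)\,dx}{\int_{-1}^1 P^2(x)\,\omega(x)\,dx} = \frac{w_0\, \xi\, P^2(\xi)}{w_0\, P^2(\xi)} = \xi.$$
Combining the two bounds yields the stated equality; since the zeros of $p_{k+1}$ all lie in the interior of the convex hull of the support of $\omega$, the maximum on the right is indeed attained in $(-1,1)$ and the common value is at most $1$.

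The only substantive input is the Gauss quadrature fact recalled at the outset, which is classical and is exactly the circle of results attributed in the proposition's citations; the rest is elementary. There is no real obstacle here — the one point requiring a moment's care is the non-vanishing of the denominator $\sum_i w_i P^2(x_i)$, i.e. that a nonzero element of $\mathcal{P}_k$ cannot annihilate $k+1$ distinct points, which is immediate from the degree count. As an alternative I would mention only in passing: expanding $P$ in the orthonormal polynomials $\hat p_0,\dots,\hat p_k$ turns the left-hand ratio into the Rayleigh quotient of the $(k+1)\times(k+1)$ leading block $J_{k+1}$ of the Jacobi matrix (via the three-term recurrence), whose largest eigenvalue is the largest zero of $p_{k+1}$ because $\det(xI - J_{k+1})$ is a scalar multiple of $p_{k+1}(x)$; this gives the same conclusion but is slightly less self-contained, so I would write out the quadrature argument in full.
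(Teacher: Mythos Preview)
Your argument is correct. The paper itself does not supply a proof of this proposition; it is stated with citations to \cite[Chapter 7.72]{szeg1939orthogonal} and \cite{tchebichef1883} and used as a black box. Your Gaussian-quadrature argument is precisely the classical one underlying those references: exactness of the $(k+1)$-point Gauss rule on $\mathcal{P}_{2k+1}$ reduces the Rayleigh quotient to a weighted average of the nodes, and the extremal polynomial $\prod_{i\ge 1}(x-x_i)$ attains the largest node. The alternative you mention---passing to the Jacobi matrix and identifying the ratio with its Rayleigh quotient---is exactly the viewpoint Szeg\H{o} emphasizes in Chapter 7.72, so either write-up would be faithful to the cited sources.
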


\begin{proposition} \label{prop:cher}
	Let $Z \sim \chi^2_k$. Then $\mathbb{P}[Z \le x ] \le \left[\frac{x}{k} \exp\left\{1-\frac{x}{k}\right\} \right]^{\tfrac{k}{2}}$ for $x \le k$ and $ \mathbb{P}[Z \ge x ] \le \left[\frac{x}{k} \exp\left\{1-\frac{x}{k}\right\} \right]^{\tfrac{k}{2}}$ for $x \ge k$.
\end{proposition}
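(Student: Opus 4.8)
The plan is to use the exponential form of Markov's inequality (the Chernoff method) together with the explicit moment generating function of a chi-square random variable. Recall that for $Z\sim\chi^2_k$ one has $\E[e^{tZ}]=(1-2t)^{-k/2}$ for $t<\tfrac12$, and $\E[e^{-tZ}]=(1+2t)^{-k/2}$ for all $t>0$; both identities follow directly from the $\chi^2_k$ density, or by writing $Z=\sum_{i=1}^k N_i^2$ with $N_i$ i.i.d.\ standard normal and using independence. I would record these two facts first.

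For the upper tail, fix $x\ge k$ and $t\in(0,\tfrac12)$. Markov's inequality applied to the nonnegative random variable $e^{tZ}$ gives
$$\mathbb{P}[Z\ge x]\le e^{-tx}\,\E[e^{tZ}]=e^{-tx}(1-2t)^{-k/2}.$$
I would then minimize the right-hand side over $t$. Writing $g(t)=-tx-\tfrac{k}{2}\ln(1-2t)$, one has $g'(t)=-x+\tfrac{k}{1-2t}$, so the stationary point is $1-2t=k/x$, i.e.\ $t=\tfrac12(1-k/x)$, which indeed lies in $[0,\tfrac12)$ precisely because $x\ge k$; since $g''(t)=2k(1-2t)^{-2}>0$, this is the global minimum on the interval. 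Substituting $t=\tfrac12(1-k/x)$ gives $e^{-tx}=e^{-(x-k)/2}$ and $(1-2t)^{-k/2}=(x/k)^{k/2}$, hence the bound equals $e^{-(x-k)/2}(x/k)^{k/2}=\bigl(\tfrac{x}{k}\exp\{1-\tfrac{x}{k}\}\bigr)^{k/2}$, as claimed.

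For the lower tail, fix $x\le k$ and $t>0$, and apply Markov's inequality to $e^{-tZ}$:
$$\mathbb{P}[Z\le x]=\mathbb{P}\bigl[e^{-tZ}\ge e^{-tx}\bigr]\le e^{tx}\,\E[e^{-tZ}]=e^{tx}(1+2t)^{-k/2}.$$
Minimizing the exponent $h(t)=tx-\tfrac{k}{2}\ln(1+2t)$ over $t>0$: $h'(t)=x-\tfrac{k}{1+2t}$ vanishes at $1+2t=k/x$, i.e.\ $t=\tfrac12(k/x-1)\ge0$ since $x\le k$, and $h''(t)=2k(1+2t)^{-2}>0$ confirms this is the minimum. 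Substituting yields $e^{tx}=e^{(k-x)/2}$ and $(1+2t)^{-k/2}=(x/k)^{k/2}$, so the bound is again $e^{(k-x)/2}(x/k)^{k/2}=\bigl(\tfrac{x}{k}\exp\{1-\tfrac{x}{k}\}\bigr)^{k/2}$, identical to the upper-tail expression.

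There is essentially no serious obstacle here: the only two points requiring care are (i) checking that the optimal $t$ falls inside the admissible range where the relevant moment generating function is finite and Markov applies — which is exactly what the hypotheses $x\ge k$ (resp.\ $x\le k$) guarantee — and (ii) confirming the stationary point is a minimizer, which follows from the convexity of the exponent computed above. The slight elegance of the result is that both tail bounds reduce, after optimization, to the same closed-form quantity $\bigl(\tfrac{x}{k}\exp\{1-\tfrac{x}{k}\}\bigr)^{k/2}$.
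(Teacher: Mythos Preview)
Your argument is correct: the standard Chernoff method via the chi-square moment generating function, with the optimal tilt $t=\tfrac12(1-k/x)$ for the upper tail and $t=\tfrac12(k/x-1)$ for the lower tail, gives exactly the stated bound, and your checks that the optimizers lie in the admissible range and are genuine minima are accurate.

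The paper does not actually carry out this computation; it simply invokes \cite[Lemma~2.2]{dasgupta2003elementary} (a concentration bound for the squared length of a random Gaussian projection, phrased in dimensions $k$ and $d$) and lets $d\to\infty$. Underneath, the Dasgupta--Gupta lemma is proved by precisely the same Chernoff/MGF optimization you wrote down, so the mathematical content is identical; the difference is that your version is self-contained and avoids the detour through the projection setting and the limiting argument, at the cost of a few more lines. Either presentation is fine here.
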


\begin{proof}
	The result follows from taking \cite[Lemma 2.2]{dasgupta2003elementary} and letting $d \rightarrow \infty$.
\end{proof}

We are now prepared to prove a lower bound of order $m^{-2}$.

\begin{lemma} \label{lm:msquared_lb}
	If $m = o\big(n^{1/2} \ln^{-1/2} n \big)$ and $\omega(1)$, then
	$$\max_{A \in \mathcal{S}^n}  \bP_{b \sim \mathcal{U}(S^{n-1})} \left[  \frac{\lambda_1(A) - \lambda_1^{(m)}(A,b)}{\lambda_1(A) - \lambda_n(A)} \ge \frac{1.08}{m^2} \right]  \ge 1 - o(1/n).$$
\end{lemma}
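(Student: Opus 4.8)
The plan is to exhibit an explicit sequence of matrices $A_n \in \mathcal{S}^n$ and show that, with probability $1 - o(1/n)$ over the choice of $b$, the relative error of $m$ iterations is at least $1.08/m^2$. By shift/scale invariance I take $\lambda_1(A_n) = 1$, $\lambda_n(A_n) = 0$, and I will choose the eigenvalues to lie in $[0,1]$ clustered so that the discrete polynomial minimization in (\ref{eqn:chi_prob}) is well approximated by a continuous one. The natural choice is to place the eigenvalues so that, after the affine change of variables sending $[\lambda_n,\lambda_1] = [0,1]$ to $[-1,1]$ (via $x = 2\lambda - 1$), the empirical spectral distribution converges to the Legendre (i.e.\ Lebesgue) measure on $[-1,1]$; e.g.\ take $\lambda_i$ to be $\tfrac{1}{2}(1 + x_i^{(n)})$ for a suitable near-uniform grid, or more cleverly take the $\lambda_i$ to be (a rescaling of) the zeros of the $n$th Legendre polynomial together with the endpoints. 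The point of using Legendre nodes is Proposition \ref{prop:gquad}: Gauss--Legendre quadrature is exact on $\mathcal{P}_{2m-1}$, so for $P \in \mathcal{P}_{m-1}$ the sums $\sum_i w_i P^2(x_i)(1 - x_i)$ and $\sum_i w_i P^2(x_i)$ are \emph{exactly} $\int_{-1}^1 P^2(x)(1-x)\,dx$ and $\int_{-1}^1 P^2(x)\,dx$, which is what lets me pass from a sum to an integral with no error term.

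The core steps are: (i) Start from (\ref{eqn:chi_prob}) and, using $Y_i \sim \chi^2_1$, argue that with overwhelming probability the weights $Y_i$ are all simultaneously comparable to the quadrature weights $w_i$ up to polylogarithmic factors — precisely, by Proposition \ref{prop:cher} and a union bound over the $n$ coordinates, $\bP[Y_i \le \delta_n \text{ for some } i] \le n\,(\delta_n e^{1-\delta_n})^{1/2}$ and $\bP[Y_i \ge R_n \text{ for some } i] \le n\,(R_n e^{1-R_n})^{1/2}$, so choosing $\delta_n = c/\ln^2 n$ and $R_n = c\ln^2 n$ (or similar) makes the failure probability $o(1/n)$. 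The regime $m = o(n^{1/2}\ln^{-1/2}n)$ is exactly what makes the polylog distortion in the weights harmless relative to the $m^{-2}$ target: the accumulated distortion over the $\lesssim m^2$ ``active'' nodes near $x=1$ stays lower order. (ii) On this good event, sandwich the discrete Rayleigh quotient $\sum_i Y_i P^2(\lambda_i)(\lambda_1 - \lambda_i) / \big((\lambda_1-\lambda_n)\sum_i Y_i P^2(\lambda_i)\big)$ between constant multiples of the quadrature-weighted version, and use exactness of Gauss--Legendre to identify the latter with the continuous ratio $\int_{-1}^1 (1-x)P^2 / \big(2\int_{-1}^1 P^2\big)$ (the factor $2$ from $\lambda_1 - \lambda_n = 1$ after rescaling $x = 2\lambda-1$, so $1 - \lambda_i = (1-x_i)/2$). (iii) Bound this continuous minimization from below: $\min_{P \in \mathcal{P}_{m-1}} \int_{-1}^1 (1-x) P^2(x)\,dx \big/ \int_{-1}^1 P^2(x)\,dx = 1 - \max\{x : \tilde p_m(x) = 0\}$ where $\tilde p_m$ is the degree-$m$ orthogonal polynomial for the weight $(1-x)\,dx$ on $[-1,1]$ — this is exactly Proposition \ref{prop:intmax}. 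These are (shifted) Jacobi polynomials $P_m^{(0,1)}$, whose largest zero is $1 - \Theta(m^{-2})$; plugging in the known sharp asymptotic for that zero (the relevant Bessel-zero asymptotic, giving $1 - x_{\max} \sim j_{0,1}^2 / (2m^2) \cdot (1+o(1))$ with $j_{0,1} \approx 2.405$, so $j_{0,1}^2/2 \approx 2.89$; the constant $1.08$ will come out after tracking the factor-$2$ rescaling and the $o(1)$ slack) yields the bound $1.08/m^2$ for all large $m$, i.e.\ $m = \omega(1)$.

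The main obstacle is step (ii): controlling the \emph{minimum over $P$} simultaneously with the perturbed weights. Because the minimizing polynomial $P$ depends on $Y$, I cannot just perturb the weights in a fixed expression; instead I need a two-sided argument. For the lower bound on the error I want to say the perturbed minimum is at least (const)$\times$(unperturbed minimum), which follows from a uniform-in-$P$ sandwich $c_1 w_i \le Y_i \le c_2 w_i$ — but the chi-square lower tail only gives $Y_i \gtrsim 1/\ln^2 n$ pointwise, whereas the quadrature weights $w_i$ near $x = \pm 1$ are themselves as small as $\Theta(1/n^2)$ by part (3) of Proposition \ref{prop:gquad}, so a naive uniform ratio bound fails. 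The fix is to not demand a uniform ratio bound but instead to use that only the nodes with $1 - x_i = O(m^{-2})$ — of which there are $O(m)$ by the node spacing in part (2) of Proposition \ref{prop:gquad} — actually matter for the minimization, and on those nodes $w_i = \Theta(1/n)$ up to the $\cos$ factor, which is $\Theta(m/n)$ for the extreme ones; one then checks $Y_i/w_i$ stays in a polylog window on precisely this $O(m)$-sized set, and bounds the contribution of the remaining nodes crudely from below by zero in the numerator-type sum and from above in the denominator-type sum. Carefully choosing how many nodes to treat as ``active'' (as a function of $m$), and verifying that $m = o(n^{1/2}\ln^{-1/2}n)$ is exactly the threshold making the error terms $o(m^{-2})$, is where the real work lies; everything else is assembling Propositions \ref{prop:gquad}, \ref{prop:intmax}, \ref{prop:cher} and the Jacobi-zero asymptotic.
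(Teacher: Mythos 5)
Your high-level strategy (explicit spectrum, reduce to (\ref{eqn:chi_prob}), compare the random-weight discrete minimization with a continuous one via Proposition \ref{prop:intmax}, get a constant $\approx 2.9/m^2$ before normalizing by $\lambda_1-\lambda_n\le 2$) is in the right spirit, but the construction you propose does not assemble into a proof, and it misses the one idea that makes the paper's argument work. The paper does \emph{not} place $n$ distinct eigenvalues at the zeros of a degree-$n$ Legendre polynomial; it uses only the $2m$ zeros of the degree-$2m$ Legendre polynomial, giving the \emph{top} zero multiplicity one and every other zero multiplicity $\approx n/(2m)$. Then the $n$ chi-square weights collapse into $2m$ aggregated weights $\hat Y_j$: those with $j\ge 2$ have $\sim n/m$ degrees of freedom and concentrate, while $\hat Y_1\sim\chi^2_1$ only needs the one-sided bound $\hat Y_1\le \frac{w_1}{w_m}\min_{j\ge2}\hat Y_j$; since $w_1/w_m\sim 1/m$, this requires $\Pr[\chi^2_1\gtrsim n/m^2]=o(1/n)$, which is exactly where $m=o(n^{1/2}\ln^{-1/2}n)$ enters (not through ``accumulated polylog distortion over $\lesssim m^2$ active nodes''). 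On that single event, a monotonicity argument in each $\hat Y_j$ (valid when the quotient is below $x_1-x_2$, with the complementary case handled separately by the bound $x_1-x_2=\tfrac{5\pi^2}{16m^2}+O(m^{-3})$) replaces the random weights by the Gauss--Legendre weights, exactness of the $2m$-point rule on $\mathcal{P}_{4m-1}$ converts sums to integrals, and Proposition \ref{prop:intmax} with $\omega\equiv1$ gives $\tfrac{4+9\pi^2}{32m^2}-(1-x_1)+O(m^{-3})=\tfrac{12+27\pi^2}{128m^2}+O(m^{-3})$, hence $1.08/m^2$ after dividing by $2$. Note also that the relevant extremal value is $1$ minus the largest zero of the degree-$m$ \emph{Legendre} polynomial (apply Proposition \ref{prop:intmax} with $\omega\equiv1$ after writing the ratio as $1-\int xP^2/\int P^2$), not the largest zero of the orthogonal polynomials for the weight $(1-x)\,dx$; your numerical constant $j_{0,1}^2/2\approx 2.89$ happens to agree, but the family you cite is the wrong one.

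Concretely, your version fails at the step you yourself flag as ``the real work.'' If you take the eigenvalues to be the $n$ Legendre zeros with multiplicity one, the empirical spectral distribution tends to the arcsine law, not Lebesgue measure, and the attainable constant is only $\pi^2/16\approx 0.62<1.08$, so the lemma cannot be proved with that spectrum at all; if instead you take a near-uniform grid (which does give limiting constant $\approx1.45$), then Gauss--Legendre exactness — your central tool — is unavailable, because the sums that actually arise carry weights $Y_i=\Theta(1)$ rather than $n w_i$, and you are forced into a uniform-over-$P$ quadrature-error analysis of the type the paper only needs for Lemma \ref{lm:lowerbound_mix}, which your sketch does not carry out. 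Your proposed patch is also quantitatively off: the number of nodes with $1-x_i=O(m^{-2})$ is $\Theta(n/m)$ for Legendre nodes (or $\Theta(n/m^2)$ for a uniform grid), not $O(m)$, and ``dropping the far nodes from the numerator and bounding them in the denominator'' does not by itself give the required uniform-in-$P$ lower bound on the Rayleigh quotient, since the minimizing polynomial depends on the realization of $Y$. So the gap is genuine: the multiplicity-one-top-eigenvalue construction (or some substitute mechanism that both pins the achievable constant above $1.08$ and reduces the probabilistic control to events of probability $1-o(1/n)$ in the stated range of $m$) is missing.
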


\begin{proof}
	The structure of the proof is as follows. We choose a matrix with eigenvalues based on the zeros of a Legendre polynomial, and show that a large subset of $[0,\infty)^n$ (with respect to $f_Y(y)$) satisfies conditions that allow us to lower bound the relative error using an integral minimization problem. Let $x_1>...>x_{2m}$ be the zeros of the $(2m)^{th}$ degree Legendre polynomial. Let $A \in \mathcal{S}^n$ have eigenvalue $x_1$ with multiplicity one, and remaining eigenvalues given by $x_j$, $j = 2,...,2m$, each with multiplicity at least $\lfloor (n-1) /(2m-1) \rfloor$.
	By equation (\ref{eqn:chi_prob}),
	\begin{eqnarray*}
		\bP \left[\frac{\lambda_1 - \lambda_1^{(m)}}{\lambda_1-\lambda_n} \ge \epsilon \right]  &=& \bP \left[  \min_{\substack{P \in \mathcal{P}_{m-1} \\ P \ne 0}}  \frac{ \sum_{i=2}^n Y_i P^2(\lambda_i) (\lambda_1-\lambda_i)}{(\lambda_1-\lambda_n)\sum_{i=1}^n Y_i P^2(\lambda_i) }  \ge \epsilon \right] \\
		&=&  \bP \left[   \min_{\substack{P \in \mathcal{P}_{m-1} \\ P \ne 0}} \frac{\sum_{j=2}^{2m} \hat Y_j P^2(x_j) \left(x_1 -x_j \right) }{(x_1-x_{2m})\sum_{j=1}^{2m} \hat Y_j P^2\left(x_j\right) } \ge \epsilon \right],
	\end{eqnarray*}
	where $Y_1,...,Y_n \sim \chi^2_1$, and $\hat Y_j$ is the sum of the $Y_i$ that satisfy $\lambda_i= x_j$. $\hat Y_1$ has one degree of freedom, and $\hat Y_j$, $j = 2,...,2m$, each have at least $\lfloor (n-1)/(2m-1) \rfloor$ degrees of freedom. Let $w_1,...,w_{2m}$ be the weights of Gaussian quadrature associated with $x_1,...,x_{2m}$. By Proposition \ref{prop:gquad},
	$$x_1 = 1 - \frac{4 + 9\pi^2}{128m^2} + O(m^{-3}), \qquad x_2 = 1 - \frac{4 + 49\pi^2}{128 m^2}+ O(m^{-3}),$$
	%
	and, therefore, $1-x_1^2 = \frac{4 + 9 \pi^2}{64m^2} + O(m^{-3})$. Again, by Proposition \ref{prop:gquad}, we can lower bound the smallest ratio between weights by
	\begin{eqnarray*}
		\frac{w_1}{w_m} &\ge& \sqrt{1-x_1^2}\left(1 - \frac{1}{8(2m+1/2)^2 (1-x_1^2)} \right)\\
		&=&\left( \frac{\sqrt{4 + 9 \pi^2}}{8m}  +O(m^{-2}) \right) \left(1 - \frac{1}{(4 + 9\pi^2)/2 +O(m^{-1})} \right) \\
		&=& \frac{2+9 \pi^2}{8 \sqrt{4 + 9\pi^2} \, m} +O(m^{-2}).
	\end{eqnarray*}
	Therefore, by Proposition \ref{prop:cher},
	\begin{eqnarray*} 
		\bP \left[ \min_{j\ge 2} \hat Y_j \ge \frac{w_m }{w_1} \hat Y_1 \right] &\ge& \mathbb{P}\left[ \hat Y_j \ge \frac{1}{3} \left\lfloor \frac{n-1}{2m-1} \right\rfloor, \, j \ge 2\right] \times \mathbb{P}\left[ \hat Y_1 \le \frac{w_1}{3 w_m} \left\lfloor \frac{n-1}{2m-1} \right\rfloor \right] \\
		&\ge& \bigg[ 1 - \left(\tfrac{e^{2/3}}{3}\right)^{ \tfrac{1}{2} \left\lfloor \tfrac{n-1}{2m-1} \right\rfloor} \bigg]^{2m-1}  \bigg[ 1- \left(\tfrac{w_1}{3 w_m} \left\lfloor \tfrac{n-1}{2m-1} \right\rfloor e^{1-\tfrac{w_1}{3 w_m} \left\lfloor \tfrac{n-1}{2m-1} \right\rfloor}  \right)^{\tfrac{1}{2}}  \bigg]\\
		&=& 1 - o(1/n).
	\end{eqnarray*}
	We now restrict our attention to values of $Y = (Y_1,...,Y_n) \in [0,\infty)^n$ that satisfy $w_1 \min_{j\ge 2} \hat Y_j \ge  w_m  \hat Y_1$. If, for some fixed choice of $Y$,
	\begin{equation}\label{eqn:lbproof}
	\min_{\substack{P \in \mathcal{P}_{m-1} \\ P \ne 0}}   \frac{\sum_{j=2}^{2m} \hat Y_j P^2(x_j) (x_1-x_j) }{\sum_{j=1}^{2m} \hat Y_j P^2 (x_j) } \le x_1 - x_2,
	\end{equation}
	then, by Proposition \ref{prop:gquad} and Proposition \ref{prop:intmax} for $\omega(x) =1$,
	\begin{eqnarray*}
		\min_{\substack{P \in \mathcal{P}_{m-1} \\ P \ne 0}}   \frac{\sum_{j=2}^{2m} \hat Y_j P^2(x_j) (x_1-x_j) }{\sum_{j=1}^{2m} \hat Y_j P^2(x_j) } 	&\ge& \min_{\substack{P \in \mathcal{P}_{m-1} \\ P \ne 0}}  \frac{\sum_{j=2}^{2m}w_j P^2(x_j) (x_1-x_j) }{\sum_{j=1}^{2m} w_j P^2(x_j)} \\ &=& \min_{\substack{P \in \mathcal{P}_{m-1} \\ P \ne 0}} \frac{\sum_{j=1}^{2m}w_j P^2(x_j)(1-x_j) }{\sum_{j=1}^{2m} w_j P^2(x_j)} - (1-x_1)\\
		&=& \min_{\substack{P \in \mathcal{P}_{m-1} \\ P \ne 0}} \frac{\displaystyle{\int_{-1}^1 P^2(y) (1-y) \, dy}}{\displaystyle{\int_{-1}^1 P^2(y)  \, dy}} - \frac{4 + 9 \pi^2}{128m^2} +O(m^{-3})\\
		&=& \frac{4 + 9 \pi^2}{32m^2} - \frac{4 + 9 \pi^2}{128m^2} +O(m^{-3}) = \frac{12 + 27 \pi^2}{128m^2} +O(m^{-3}).
	\end{eqnarray*}
	Alternatively, if equation (\ref{eqn:lbproof}) does not hold, then we can lower bound the left hand side of (\ref{eqn:lbproof}) by $x_1-x_2 = \tfrac{5 \pi^2}{16 m^2} +O(m^{-3})$. Noting that $x_1 - x_{2m} \le 2$ completes the proof, as $  \tfrac{12 + 27\pi^2}{256}$ and $\tfrac{5 \pi^2}{32}$ are both greater than $1.08$.
\end{proof}

The previous lemma illustrates that the inverse quadratic dependence of the relative error on the number of iterations $m$ persists up to $m =o(n^{1/2}/\ln^{1/2}n)$. As we will see in Sections 4 and 6, this $m^{-2}$ error is indicative of the behavior of the Lanczos method for a wide range of typical matrices encountered in practice. Next, we aim to produce a lower bound of the form $\ln^2 n / (m^2 \ln^2 \ln n)$. To do so, we will make use of more general Jacobi polynomials instead of Legendre polynomials. However, due to the distribution of eigenvalues required to produce this bound, Gaussian quadrature is no longer exact, and we must make use of estimates for basic composite quadrature. We recall the following propositions regarding composite quadrature and Jacobi polynomials.

\begin{proposition} \label{prop:rsum}
	If $f \in \mathcal{C}^1[a,b]$, then for $a=x_0<...<x_n = b$,
	$$ \left| \int_{a}^b f(x) \, dx - \sum_{i=1}^n (x_i - x_{i-1})f(x_i^*) \right| \le \frac{b - a}{2} \max_{x \in [a,b]} | f'(x)| \, \max_{i=1,...,n} (x_i - x_{i-1}),$$
	where $x_i^* \in [x_{i-1},x_i]$, $i = 1,...,n$.
\end{proposition}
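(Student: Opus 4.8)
The plan is to localize the quadrature error to the individual subintervals $[x_{i-1},x_i]$ of the partition and control the oscillation of $f$ on each one using the hypothesis $f\in\mathcal{C}^1$. Since $\int_a^b f(x)\,dx=\sum_{i=1}^n\int_{x_{i-1}}^{x_i}f(x)\,dx$, the quantity to be bounded equals $\left|\sum_{i=1}^n\int_{x_{i-1}}^{x_i}\bigl(f(x)-f(x_i^*)\bigr)\,dx\right|$, so by the triangle inequality it is at most $\sum_{i=1}^n\int_{x_{i-1}}^{x_i}\bigl|f(x)-f(x_i^*)\bigr|\,dx$.

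Next I would estimate $|f(x)-f(x_i^*)|$ for $x,x_i^*\in[x_{i-1},x_i]$. Writing $M:=\max_{[a,b]}|f'|$ (finite, since $f'$ is continuous on a compact interval), the fundamental theorem of calculus gives $f(x)-f(x_i^*)=\int_{x_i^*}^{x}f'(t)\,dt$, whence $|f(x)-f(x_i^*)|\le M\,|x-x_i^*|$; alternatively the mean value theorem yields the same estimate. The one step that is not purely mechanical — and the step responsible for the sharp constant $\tfrac12$ rather than $1$ — is to integrate $|x-x_i^*|$ exactly rather than crudely: splitting the integral at $x_i^*$,
\[
\int_{x_{i-1}}^{x_i}|x-x_i^*|\,dx=\tfrac12\bigl(x_i^*-x_{i-1}\bigr)^2+\tfrac12\bigl(x_i-x_i^*\bigr)^2\le\tfrac12\bigl(x_i-x_{i-1}\bigr)^2,
\]
the last inequality because $u^2+v^2\le(u+v)^2$ for $u,v\ge0$ with $u=x_i^*-x_{i-1}$ and $v=x_i-x_i^*$.

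Finally I would sum over $i$ and factor out the mesh size $h:=\max_{1\le j\le n}(x_j-x_{j-1})$: with $h_i:=x_i-x_{i-1}$,
\[
\left|\int_a^b f(x)\,dx-\sum_{i=1}^n h_i f(x_i^*)\right|\le\frac{M}{2}\sum_{i=1}^n h_i^2\le\frac{M}{2}\,h\sum_{i=1}^n h_i=\frac{M}{2}\,(b-a)\,h,
\]
using $\sum_{i=1}^n h_i=b-a$. This is exactly the asserted inequality. There is no genuine obstacle here; the only thing to watch is not to discard the factor $\tfrac12$ by replacing $|x-x_i^*|$ with $h_i$ on all of $[x_{i-1},x_i]$, which would weaken the bound by a factor of two.
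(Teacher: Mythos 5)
Your proof is correct and complete: the decomposition over subintervals, the bound $|f(x)-f(x_i^*)|\le M|x-x_i^*|$, the exact evaluation $\int_{x_{i-1}}^{x_i}|x-x_i^*|\,dx=\tfrac12(x_i^*-x_{i-1})^2+\tfrac12(x_i-x_i^*)^2\le\tfrac12 h_i^2$, and the final summation all check out, and this is precisely where the factor $\tfrac12$ comes from. The paper states this proposition without proof (it is a standard composite quadrature error estimate), so there is nothing to compare against; your argument is the standard one and needs no changes.
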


\begin{proposition}{(\cite[Chapter 3.2]{shen2011spectral})} \label{prop:jac2}
	Let $\{P^{(\alpha,\beta)}_k(x)\}_{k=0}^\infty$, $\alpha, \beta >-1$, be the orthogonal system of Jacobi polynomials over $[-1,1]$ with respect to weight function $\omega^{\alpha,\beta}(x) = (1-x)^\alpha(1+x)^\beta$, namely,
	$$P^{(\alpha,\beta)}_k(x) =  \frac{\Gamma(k+\alpha+1)}{k! \, \Gamma(k+\alpha+\beta+1)} \sum_{i=0}^k {k \choose i } \frac{\Gamma(k+i+\alpha+\beta+1)}{\Gamma(i+\alpha+1)} \left( \frac{x-1}{2} \right)^i.$$
	Then
	\begin{enumerate}[(i)]
		\item $\displaystyle{\int_{-1}^1 \left[ P^{(\alpha,\beta)}_k(x) \right]^2 \omega^{\alpha,\beta}(x) \, dx = \frac{2^{\alpha + \beta +1} \Gamma(k+\alpha+1) \Gamma(k+\beta+1)}{(2k+\alpha + \beta +1)k! \, \Gamma(k+\alpha + \beta+1)} }$, \\
		\item $\displaystyle{ \max_{x \in [-1,1]} \left|P^{(\alpha,\beta)}_k(x) \right| =  \max \left\{ \frac{\Gamma(k+\alpha+1)}{k! \, \Gamma(\alpha+1)}, \frac{\Gamma(k+\beta+1)}{k! \, \Gamma(\beta+1)} \right\}}$ for $\max \{\alpha ,\beta\} \ge -\tfrac{1}{2}$, \\
		\item $\displaystyle{\frac{d}{dx} P^{(\alpha,\beta)}_k(x) = \frac{k+\alpha + \beta +1}{2} P_{k-1}^{(\alpha+1,\beta+1)}(x) }$, \\
		\item $\displaystyle{\max \, \{ x \in [-1,1] \, | \, P^{(\alpha,\beta)}_k(x) = 0 \} \le \sqrt{1 - \left(\frac{\alpha+3/2}{k + \alpha + 1/2}\right)^2 }}$ for $\alpha \ge \beta >-\tfrac{11}{12}$ and $k >1$.

	\end{enumerate}
\end{proposition}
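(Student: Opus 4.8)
Parts (i)--(iii) follow from the explicit series together with classical identities, and only (iv) requires real work. For (i) the plan is to use the Rodrigues representation
$$\omega^{\alpha,\beta}(x)\,P_k^{(\alpha,\beta)}(x)=\frac{(-1)^k}{2^k\,k!}\,\frac{d^k}{dx^k}\Big[(1-x)^{\alpha+k}(1+x)^{\beta+k}\Big],$$
substitute it for one of the two factors in $\int_{-1}^1[P_k^{(\alpha,\beta)}]^2\,\omega^{\alpha,\beta}\,dx$, and integrate by parts $k$ times; every boundary term carries a factor $(1-x)^{\alpha+j}(1+x)^{\beta+j}$ with $1\le j\le k$, which vanishes at $\pm1$ because $\alpha,\beta>-1$. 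Only the leading coefficient of $P_k^{(\alpha,\beta)}$ survives, and it is read directly off the $i=k$ term of the stated series as $\Gamma(2k+\alpha+\beta+1)/(2^k k!\,\Gamma(k+\alpha+\beta+1))$. What remains is the elementary Beta integral $\int_{-1}^1(1-x)^{\alpha+k}(1+x)^{\beta+k}\,dx=2^{\alpha+\beta+2k+1}\Gamma(k+\alpha+1)\Gamma(k+\beta+1)/\Gamma(2k+\alpha+\beta+2)$; collecting Gamma factors and using $\Gamma(2k+\alpha+\beta+2)=(2k+\alpha+\beta+1)\Gamma(2k+\alpha+\beta+1)$ yields (i).

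For (iii), I would differentiate the given series term by term: the $i$-th term contributes $\tfrac{i}{2}\big(\tfrac{x-1}{2}\big)^{i-1}$, and after the shift $j=i-1$ the identity $(j+1)\binom{k}{j+1}=k\binom{k-1}{j}$ turns the sum into the defining series of $P_{k-1}^{(\alpha+1,\beta+1)}$; comparing the normalizing prefactors produces precisely the factor $(k+\alpha+\beta+1)/2$. For (ii), the endpoint values $P_k^{(\alpha,\beta)}(1)=\Gamma(k+\alpha+1)/(k!\,\Gamma(\alpha+1))$ and $|P_k^{(\alpha,\beta)}(-1)|=\Gamma(k+\beta+1)/(k!\,\Gamma(\beta+1))$ are immediate from the series (the $i=0$ term at $x=1$, together with the reflection $P_k^{(\alpha,\beta)}(-x)=(-1)^kP_k^{(\beta,\alpha)}(x)$), so it suffices to show the maximum of $|P_k^{(\alpha,\beta)}|$ on $[-1,1]$ occurs at an endpoint when $\max\{\alpha,\beta\}\ge-\tfrac{1}{2}$. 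For this I would use (for $k\ge1$; the case $k=0$ being trivial) the Sonin-type function
$$g(x)=\big[P_k^{(\alpha,\beta)}(x)\big]^2+\frac{1-x^2}{k(k+\alpha+\beta+1)}\Big[\tfrac{d}{dx}P_k^{(\alpha,\beta)}(x)\Big]^2;$$
a one-line computation from Jacobi's differential equation gives
$$g'(x)=\frac{2\big[(\alpha+\beta+1)x+(\alpha-\beta)\big]}{k(k+\alpha+\beta+1)}\Big[\tfrac{d}{dx}P_k^{(\alpha,\beta)}(x)\Big]^2,$$
whose sign is that of the linear factor $(\alpha+\beta+1)x+(\alpha-\beta)$. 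Assuming (via the reflection identity) $\alpha\ge\beta$ and $\alpha\ge-\tfrac{1}{2}$, a short case analysis on the sign of $\alpha+\beta+1$ shows the only interior critical value of $g$ on $[-1,1]$, if any, is a minimum, so $g$ attains its maximum over $[-1,1]$ at $x=1$ or $x=-1$; since $g\ge[P_k^{(\alpha,\beta)}]^2$ with equality at $\pm1$, the maximum of $[P_k^{(\alpha,\beta)}]^2$ equals $\max\big([P_k^{(\alpha,\beta)}(1)]^2,[P_k^{(\alpha,\beta)}(-1)]^2\big)$, which is (ii).

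Part (iv) is the substantive estimate and where I expect the real difficulty. The plan is to pass to the trigonometric normal form of Jacobi's equation: with $u(\theta)=(\sin\tfrac{\theta}{2})^{\alpha+1/2}(\cos\tfrac{\theta}{2})^{\beta+1/2}P_k^{(\alpha,\beta)}(\cos\theta)$ one has $u''(\theta)+\Phi(\theta)u(\theta)=0$ on $(0,\pi)$ with
$$\Phi(\theta)=\Big(k+\tfrac{\alpha+\beta+1}{2}\Big)^{2}+\frac{1-4\alpha^2}{16\sin^2(\theta/2)}+\frac{1-4\beta^2}{16\cos^2(\theta/2)},$$
and the zeros of $u$ in $(0,\pi)$ are the images under $x\mapsto\arccos x$ of the zeros of $P_k^{(\alpha,\beta)}$ in $(-1,1)$; the largest zero $x_1$ corresponds to the smallest zero $\theta_1$ of $u$, and the claimed bound $x_1\le\sqrt{1-\big(\tfrac{\alpha+3/2}{k+\alpha+1/2}\big)^2}$ is equivalent to $\theta_1\ge\arcsin\tfrac{\alpha+3/2}{k+\alpha+1/2}$. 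I would prove the latter by a Sturm comparison: since $\Phi$ has an Euler-type singularity $\sim\big(\tfrac{1}{4}-\alpha^2\big)\theta^{-2}$ at $0$ and $u(\theta)\sim c\,\theta^{\alpha+1/2}$ there, one compares $u''+\Phi u=0$ with an explicitly solvable equation $w''+\Psi w=0$ of Bessel/Euler type whose matching solution is positive on $(0,\theta^\ast]$, $\theta^\ast:=\arcsin\tfrac{\alpha+3/2}{k+\alpha+1/2}$, and checks $\Phi\le\Psi$ on that interval. An equivalent and perhaps cleaner route is a weighted Rayleigh--Hardy estimate: integrating the self-adjoint form of Jacobi's equation over $[x_1,1]$ gives
$$k(k+\alpha+\beta+1)=\frac{\int_{x_1}^1(1-x)^{\alpha+1}(1+x)^{\beta+1}\big[\tfrac{d}{dx}P_k^{(\alpha,\beta)}(x)\big]^2\,dx}{\int_{x_1}^1(1-x)^{\alpha}(1+x)^{\beta}\big[P_k^{(\alpha,\beta)}(x)\big]^2\,dx},$$
and because $P_k^{(\alpha,\beta)}$ is one-signed on $(x_1,1)$ the right-hand side is the ground-state Rayleigh quotient of a singular Sturm--Liouville problem on $(x_1,1)$; evaluating it on a judiciously chosen trial function vanishing at $x_1$ produces an inequality between $k$ and $x_1$ that, solved for $x_1$, gives the stated estimate. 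The hypotheses enter exactly here: $\alpha\ge\beta$ localizes the analysis at $x=1$ and lets one replace $k+\tfrac{\alpha+\beta+1}{2}$ by the larger $k+\alpha+\tfrac{1}{2}$ in the final bound, $k>1$ ensures the relevant zero (equivalently, eigenvalue) exists and the comparison interval is nontrivial, and $\beta>-\tfrac{11}{12}$ is the threshold keeping the $(1-4\beta^2)$-term of $\Phi$ (equivalently, the $(1+x)^{\beta}$ weight) controlled across the whole interval. I expect the careful bookkeeping of these potential/weight terms over the full parameter ranges, and pinning down the precise constant $\alpha+\tfrac{3}{2}$, to be the main obstacle; parts (i)--(iii) are routine.
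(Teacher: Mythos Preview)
Your treatment of (i)--(iii) is fine and matches the paper's stance: those are classical identities, and the paper simply cites Szeg\H{o} and Shen--Tang--Wang rather than rederiving them. Your Rodrigues/integration-by-parts outline for (i), the term-by-term differentiation for (iii), and the Sonin-function argument for (ii) are all standard and correct.

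For (iv), however, the paper takes a completely different and much shorter route than either of your proposals. Instead of a Sturm comparison in the trigonometric variable or a Rayleigh--Hardy estimate, the paper reduces to the \emph{symmetric} (Gegenbauer) case in two steps. First, it invokes a monotonicity result of Driver (2008): the largest zero of $P_k^{(\alpha,\beta+t)}$ is strictly larger than that of $P_k^{(\alpha,\beta)}$ for $t>0$, so under $\alpha\ge\beta$ the largest zero of $P_k^{(\alpha,\beta)}$ is at most the largest zero of $P_k^{(\alpha,\alpha)}$. Second, for the Gegenbauer polynomial $P_k^{(\lambda-1/2,\lambda-1/2)}$ with $\lambda=\alpha+\tfrac12$, it quotes a bound of Nikolov (2003),
\[
x_1^2 \;\le\; \frac{(k-1)(k+2\lambda+1)}{(k+\lambda)^2+3\lambda+\tfrac54+3(\lambda+\tfrac12)^2/(k-1)} \;\le\; 1-\Big(\frac{\lambda+1}{k+\lambda}\Big)^2 \;=\; 1-\Big(\frac{\alpha+3/2}{k+\alpha+1/2}\Big)^2,
\]
valid for $\lambda>-\tfrac{5}{12}$. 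This explains both the specific constant $\alpha+\tfrac32$ (it is simply $\lambda+1$ in Nikolov's bound) and the hypothesis $\beta>-\tfrac{11}{12}$: that threshold is exactly $\lambda>-\tfrac{5}{12}$, i.e.\ $\alpha>-\tfrac{11}{12}$, inherited through $\alpha\ge\beta>-\tfrac{11}{12}$. Your interpretation of $\beta>-\tfrac{11}{12}$ as a control on the $(1-4\beta^2)$ potential term is therefore off the mark; the condition has nothing to do with the behaviour near $x=-1$ and is purely an artifact of citing Nikolov's Gegenbauer estimate.

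Your Sturm/Rayleigh program is a legitimate from-scratch alternative and could in principle be pushed through, but it would require substantially more work to land on the exact constant, and the ``careful bookkeeping'' you flag is real. The paper's approach buys the result in two citations with no computation.
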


\begin{proof}
	Equations (i)-(iii) are standard results, and can be found in \cite[Chapter 3.2]{shen2011spectral} or \cite{szeg1939orthogonal}. What remains is to prove (iv). By \cite[Lemma 3.5]{nikolov2003inequalities}, the largest zero $x_1$ of the $k^{th}$ degree Gegenbauer polynomial (i.e., $P^{(\lambda-1/2,\lambda-1/2)}_k(x)$), with $\lambda >- 5/12$, satisfies
	$$x_1^2 \le \frac{(k-1)(k+2\lambda+1)}{(k+\lambda)^2 + 3 \lambda + \tfrac{5}{4} + 3 (\lambda+\tfrac{1}{2})^2/(k-1)} \le \frac{(k-1)(k+2 \lambda +1)}{(k+\lambda)^2}  =  1 - \left(\frac{\lambda +1}{k+\lambda}\right)^2 .$$
	By \cite[Theorem 2.1]{driver2008interlacing}, the largest zero of $P^{(\alpha,\beta+t)}_k(x)$ is strictly greater than the largest zero of $P^{(\alpha,\beta)}_k(x)$ for any $t >0$. As $\alpha \ge \beta$, combining these two facts provides our desired result.
\end{proof}

For the sake of brevity, the inner product and norm on $[-1,1]$ with respect to  $\omega^{\alpha,\beta}(x) =(1-x)^\alpha (1+x)^\beta$ will be denoted by $\langle \cdot, \cdot \rangle_{\alpha,\beta}$ and $\| \cdot \|_{\alpha,\beta}$. We are now prepared to prove the following lower bound.


%
%
%
%
%

\begin{lemma}\label{lm:lowerbound_mix}
	If $m = \Theta\left(\ln n\right)$, then
	$$\max_{A \in \mathcal{S}^n}  \bP_{b \sim \mathcal{U}(S^{n-1})} \left[  \frac{\lambda_1(A) - \lambda_1^{(m)}(A,b)}{\lambda_1(A)-\lambda_n(A)} \ge  \frac{.015 \ln^2 n}{ m^2 \ln^2 \ln n} \right]  \ge 1 - o(1/n).$$
\end{lemma}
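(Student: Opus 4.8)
The plan is to mimic the structure of the proof of Lemma \ref{lm:msquared_lb}, but replace the Legendre polynomial construction with one built from Jacobi polynomials $P_k^{(\alpha,\beta)}$ with a well-chosen parameter $\alpha = \alpha(n)$ growing slowly (the natural choice being $\alpha \sim \ln n / \ln\ln n$, so that $\Gamma(k+\alpha+1)/(k!\,\Gamma(\alpha+1))$-type factors behave like $n^{\Theta(1)}$ after $m = \Theta(\ln n)$ steps). Concretely, I would pick a matrix $A \in \mathcal{S}^n$ whose eigenvalues are (a slight perturbation of) a dense equispaced grid on $[-1,1]$, with $\lambda_1 = 1$ attained with multiplicity one and the remaining $n-1$ eigenvalues tiling the grid with roughly equal multiplicities. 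Using equation (\ref{eqn:chi_prob}), I rewrite the relative error as a ratio $\sum Y_i P^2(\lambda_i)(\lambda_1-\lambda_i) / \big((\lambda_1-\lambda_n)\sum Y_i P^2(\lambda_i)\big)$ over $P \in \mathcal{P}_{m-1}$, and aggregate the chi-square weights at coincident eigenvalues as in the earlier lemma.

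The key analytic step is to lower bound, for a generic realization of the $Y_i$'s, the discrete minimization $\min_{P \in \mathcal{P}_{m-1}} \sum Y_i P^2(\lambda_i)(\lambda_1-\lambda_i)/\sum Y_i P^2(\lambda_i)$ by the continuous quantity $\min_{P} \langle (1-x) P^2, 1\rangle_{\alpha,0} / \langle P^2,1\rangle_{\alpha,0}$ associated to the Jacobi weight $\omega^{\alpha,0}(x) = (1-x)^\alpha$. By Proposition \ref{prop:intmax}, this continuous minimum equals $1 - x_1^{*}$ where $x_1^{*}$ is the largest zero of $P_m^{(\alpha,0)}$, and Proposition \ref{prop:jac2}(iv) gives $x_1^{*} \le \sqrt{1 - ((\alpha+3/2)/(m+\alpha+1/2))^2}$, hence $1 - x_1^{*} \gtrsim \frac{1}{2}\big((\alpha+3/2)/(m+\alpha+1/2)\big)^2 \sim \frac{\alpha^2}{2m^2}$ when $\alpha = \Theta(m) = \Theta(\ln n)$ (with the $\ln^2\ln n$ in the denominator entering through the precise calibration of $\alpha$ against the multiplicities). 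To pass from the discrete sum to the integral I would invoke Proposition \ref{prop:rsum}: the grid is uniform with spacing $O(1/n)$, the relevant integrand is $P^2(x)(1-x)^{\alpha}$ (or $P^2(x)(1-x)^{\alpha+1}$) whose derivative on $[-1,1]$ is bounded, crucially, in terms of $\max_{[-1,1]}|P_m^{(\alpha,\cdot)}|$ and $\max|{(P_m^{(\alpha,\cdot)})}'|$, controlled by Proposition \ref{prop:jac2}(ii)–(iii); the quadrature error is then of order $(\text{something like } n^{C\alpha/m})/n$, which is $o$ of the main term precisely because $m = \Theta(\ln n)$ forces $C\alpha/m$ to be a constant less than $1$.

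The probabilistic part is handled exactly as in Lemma \ref{lm:msquared_lb}: I need the event that every aggregated weight $\hat Y_j$ with $j \ge 2$ (each with $\approx (n-1)/(\text{number of grid points})$ degrees of freedom) is not too small relative to $\hat Y_1$ (one degree of freedom), using the Chernoff bounds of Proposition \ref{prop:cher}; a union bound over the $O(n)$ grid points gives that this event has probability $1 - o(1/n)$, provided the number of grid points grows like $n / \mathrm{polylog}(n)$ so each block still has $\gg \ln n$ degrees of freedom. On that event, replacing the $\hat Y_j$ weights by the quadrature weights only decreases the Rayleigh-type quotient by a bounded factor (again splitting into the two cases as in the earlier proof, according to whether the discrete minimum is below or above a threshold like $\lambda_1 - \lambda_2$), and the normalization $\lambda_1 - \lambda_n \le 2$ yields the claimed lower bound $.015 \ln^2 n/(m^2 \ln^2\ln n)$ after optimizing the constant $\alpha/\ln n$ and checking that the accumulated $O(m^{-3})$ and quadrature errors are negligible.

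The main obstacle I anticipate is the interplay between three competing scales: the Jacobi parameter $\alpha$ must be large enough (like $\ln n/\ln\ln n$) that $1 - x_1^{*} \sim \alpha^2/(2m^2)$ is a genuine $\ln^2 n/(m^2\ln^2\ln n)$ gain, yet small enough that the sup-norm bounds in Proposition \ref{prop:jac2}(ii), which scale like $\Gamma(m+\alpha+1)/(m!\,\Gamma(\alpha+1)) \approx m^\alpha$, keep the Riemann-sum error in Proposition \ref{prop:rsum} at $o(1)$ relative to the main term — and this requires $m^\alpha / n \to 0$, i.e. $\alpha \ln m \ll \ln n$, which is exactly compatible with $\alpha \asymp \ln n/\ln\ln n$ and $m \asymp \ln n$ but leaves little room, forcing careful bookkeeping of constants. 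Getting the explicit constant $.015$ out of this three-way optimization, rather than just the order of magnitude, will be the fiddly part; everything else is a routine adaptation of Lemma \ref{lm:msquared_lb}.
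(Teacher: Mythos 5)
Your plan breaks down at what you call the key analytic step. You take the eigenvalues to be a dense \emph{equispaced} grid on $[-1,1]$ with \emph{roughly equal multiplicities}; on the event where the aggregated chi-square weights concentrate, the resulting discrete measure approximates the \emph{uniform} (Lebesgue) measure on $[-1,1]$, not the Jacobi measure $\omega^{\alpha,0}(x)\,dx=(1-x)^{\alpha}dx$. Proposition \ref{prop:rsum} only compares a Riemann sum of a function to the integral of that \emph{same} function, so it cannot convert $\sum_j P^2(\lambda_j)(1-\lambda_j)$ over a uniform grid into $\int P^2(x)(1-x)^{\alpha+1}dx$; and the inequality you want, namely that the discrete minimum for a (nearly) uniform spectral measure is bounded below by the continuous minimum for $\omega^{\alpha,0}$ with $\alpha\asymp\ln n/\ln\ln n$, is simply false. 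For your matrix the relative error is genuinely $\Theta(m^{-2})$: by the very quadrature comparison you invoke (or by the mechanism of Theorem \ref{thm:conv}), the discrete Rayleigh quotient tracks the Legendre problem, whose value is $\asymp m^{-2}$, which for $m=\Theta(\ln n)$ is smaller than the target $\ln^2 n/(m^2\ln^2\ln n)$ by a factor $\Theta(\ln^2 n/\ln^2\ln n)$. So no bookkeeping of constants can rescue the construction; the missing idea is that the \emph{spectrum itself} must realize the Jacobi weight.

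That is exactly what the paper does: with $\ell=\lfloor .2495\ln n/\ln\ln n\rfloor$ and $k=\lfloor m^{4.004\ell}\rfloor$ distinct eigenvalues placed at $f(j/k)$, $f(x)=1-2(1-x)^{1/\ell}$, each with multiplicity $\approx n/k$, the empirical spectral density is proportional to $(1-y)^{\ell-1}$, so after two-sided chi-square concentration the discrete quotient is compared (expanding $P$ in the basis $P_r^{(\ell,0)}$) with the continuous Rayleigh quotient in the inner products $\langle\cdot,\cdot\rangle_{\ell-1,0}$ and $\langle\cdot,\cdot\rangle_{\ell,0}$, and only then do Propositions \ref{prop:intmax} and \ref{prop:jac2}(iv) give the $\big(\tfrac{\ell+1/2}{m+\ell-1/2}\big)^2$ gain. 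Two further points where your outline diverges from what is actually needed: there is no multiplicity-one top eigenvalue in this construction (all $k$ blocks get $\approx n/k$ degrees of freedom, and the concentration is two-sided, with no $w_1/w_m$-type weight comparison), and the quadrature cannot be done by Proposition \ref{prop:rsum} alone, because the composed integrands $P_r^{(\ell,0)}(f(x))P_s^{(\ell,0)}(f(x))(1-f(x))$ have a derivative singularity at $x=1$; the paper handles this with a two-part argument, $C^1$ bounds on $[0,x_{j^*}]$ plus monotonicity of the Jacobi polynomials beyond their largest zero on $[x_{j^*},1]$. Your instinct about the three competing scales ($\alpha$ versus $m$ versus the sup-norm growth $\approx m^{\alpha}$ against $n$) is the right one, but it only becomes relevant once the eigenvalue distribution is chosen to carry the Jacobi weight.
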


\begin{proof}
	The structure of the proof is similar in concept to that of Lemma \ref{lm:msquared_lb}. We choose a matrix with eigenvalues based on a function corresponding to an integral minimization problem, and show that a large subset of $[0,\infty)^n$ (with respect to $f_Y(y)$) satisfies conditions that allows us to lower bound the relative error using the aforementioned integral minimization problem. The main difficulty is that, to obtain improved bounds, we must use Proposition \ref{prop:intmax} with a weight function that requires quadrature for functions that are no longer polynomials, and, therefore, cannot be represented exactly using Gaussian quadrature. In addition, the required quadrature is for functions whose derivative has a singularity at $x = 1$, and so Proposition \ref{prop:rsum} is not immediately applicable. Instead, we perform a two-part error analysis. In particular, if a function is $C^1[0,a]$, $a <1$, and monotonic on $[a,1]$, then using Proposition \ref{prop:rsum} on $[0,a]$ and a monotonicity argument on $[a,1]$ results in an error bound for quadrature.
	
	Let $\ell = \left\lfloor \frac{.2495 \ln n}{ \ln \ln n}\right\rfloor$, $k = \left\lfloor m^{4.004 \ell} \right\rfloor$, and $m = \Theta(\ln n)$.  Consider a matrix $A \in \mathcal{S}^n$ with eigenvalues given by $ f(x_{j})$, $j = 1,...,k$, each with multiplicity either $\lfloor n/k \rfloor$ or $\lceil n/k \rceil$, where $f(x) = 1 - 2(1-x)^{\frac{1}{\ell}}$, and $x_j = 
	j/k$, $j = 1,...,k$. Then
	\begin{eqnarray*}
		\bP \left[ \frac{\lambda_1 - \lambda_1^{(m)}}{\lambda_1-\lambda_n} \ge \epsilon \right] &=& \bP \left[   \min_{\substack{P \in \mathcal{P}_{m-1} \\ P \ne 0}}  \frac{\sum_{i=2}^{n} Y_i P^2\left(\lambda_i\right) \left(\lambda_1 - \lambda_i  \right) }{(\lambda_1-\lambda_n)\sum_{i=1}^{n}  Y_i P^2\left(\lambda_i\right) } \ge \epsilon \right]\\
		&\ge& \bP \left[    \min_{\substack{P \in \mathcal{P}_{m-1} \\ P \ne 0}}   \frac{\sum_{j=1}^{k} \hat Y_j P^2\left(f(x_j)\right) \left( 1-f(x_j)\right) }{2\sum_{j=1}^{k} \hat Y_j P^2\left(f(x_j)\right) } \ge \epsilon \right],
	\end{eqnarray*}
	where $Y_1,...,Y_n \sim \chi^2_1$, and $\hat Y_j$ is the sum of the $Y_i$'s that satisfy $\lambda_i = f(x_j)$. Each $\hat Y_j$, $j = 1,...,k$, has either $\lfloor n/k \rfloor$ or $\lceil n/k \rceil$ degrees of freedom. Because $m = \Theta(
	\ln n)$, we have $k = o(n^{.999})$ and, by Proposition \ref{prop:cher},
	\begin{eqnarray*}
		\mathbb{P}\left[ .999 \lfloor \tfrac{n}{k}  \rfloor  \le  \hat Y_j \le 1.001 \lceil \tfrac{n}{k}  \rceil, \, j = 1,...,k  \right] &\ge& \left(1- \left(.999e^{.001}\right)^{\lfloor \tfrac{n}{k}  \rfloor} - \left(1.001e^{-.001}\right)^{\lceil \tfrac{n}{k}  \rceil} \right)^{k} \\ &=& 1-o(1/n) .
	\end{eqnarray*}
	Therefore, with probability $1 - o(1/n)$,
	$$  \min_{\substack{P \in \mathcal{P}_{m-1} \\ P \ne 0}}  \frac{\sum_{j=1}^{k} \hat Y_j P^2\left(f(x_j)\right) \left( 1-f(x_j)\right) }{2\sum_{j=1}^{k} \hat Y_j P^2\left(f(x_j)\right) } \ge \frac{.998}{2}  \min_{\substack{P \in \mathcal{P}_{m-1} \\ P \ne 0}} \frac{\sum_{j=1}^k P^2(f(x_j))(1-f(x_j))}{\sum_{j=1}^k P^2(f(x_j))}.$$
	
	Let $ P(y) = \sum_{r=0}^{m-1} a_r P_r^{(\ell,0)}(y)$, $a_r \in \mathbb{R}$, $r = 0,...,m-1$, and define 
	$$g_{r,s}(x) = P_r^{(\ell,0)}(f(x))P_s^{(\ell,0)}(f(x)) \quad  \text{ and } \quad \hat g_{r,s}(x) = g_{r,s}(x) (1-f(x)),$$
	$ r,s = 0,...,m-1$. Then we have
	$$ \frac{\sum_{j=1}^k P^2(f(x_j))(1-f(x_j))}{\sum_{j=1}^k P^2(f(x_j))} =\frac{\sum_{r,s=0}^{m-1} a_r a_s \sum_{j=1}^k \hat g_{r,s}(x_j)}{\sum_{r,s=0}^{m-1} a_r a_s \sum_{j=1}^k g_{r,s}(x_j)}.$$
	We now replace each quadrature $\textstyle \sum_{j=1}^k \hat g_{r,s}(x_j)$ or $\textstyle \sum_{j=1}^k g_{r,s}(x_j)$ in the previous expression by its corresponding integral, plus a small error term. The functions $g_{r,s}$ and $\hat g_{r,s}$ are not elements of $\mathcal{C}^1[0,1]$, as $f'(x)$ has a singularity at $x=1$, and, therefore, we cannot use Proposition \ref{prop:rsum} directly. Instead we break the error analysis of the quadrature into two components. We have $g_{r,s},\hat g_{r,s} \in \mathcal{C}^1[0,a]$ for any $0<a<1$, and, if $a$ is chosen to be large enough, both $g_{r,s}$ and $\hat g_{r,s}$ will be monotonic on the interval $[a,1]$. In that case, we can apply Proposition \ref{prop:rsum} to bound the error over the interval $[0,a]$ and use basic properties of Riemann sums of monotonic functions to bound the error over the interval $[a,1]$.
	
	The function $f(x)$ is increasing on $[0,1]$, and, by equations (iii) and (iv) of Proposition \ref{prop:jac2}, the function $P_r^{(\ell,0)}(y)$, $r=2,...,m-1,$ is increasing on the interval
	\begin{equation}\label{eqn:preinterval}
	\left[\sqrt{1 - \left(\frac{\ell+5/2}{m + \ell - 1/2}\right)^2 }\, , \; 1\right] .
	\end{equation}
	The functions $P_0^{(\ell,0)}(y) = 1$ and $P_1^{(\ell,0)}(y) =\ell +1 + (\ell+2)(y-1)/2$ are clearly non-decreasing over interval (\ref{eqn:preinterval}). By the inequality $ \sqrt{1-y} \le 1-y/2 $, $y \in [0,1]$, the function $P_r^{(\ell,0)}(f(x))$, $r=0,...,m-1,$ is non-decreasing on the interval
	\begin{equation}\label{eqn:interval}
	\left[ 1 - \left(\frac{\ell+5/2}{2m + 2\ell - 1}\right)^{2\ell} \, , \; 1 \right].
	\end{equation}
	Therefore, the functions $g_{r,s}(x)$ are non-decreasing and $\hat g_{r,s}(x)$ are  non-increasing on the interval (\ref{eqn:interval}) for all $r,s=0,...,m-1$. The term $\textstyle \left(\frac{\ell+5/2}{2m + 2\ell - 1}\right)^{2\ell} = \omega(1/k)$, and so, for sufficiently large $n$, there exists an index $j^* \in \{1,...,k-2\}$ such that
	$$   1 - \left(\frac{\ell+5/2}{2m + 2\ell - 1}\right)^{2\ell} \le \,  x_{j^*} \, <  1 - \left(\frac{\ell+5/2}{2m + 2\ell - 1}\right)^{2\ell}  + \frac{1}{k} \, < \,  1.$$
	
	We now upper bound the derivatives of $g_{r,s}$ and $\hat g_{r,s}$ on the interval $[0,x_{j^*}]$. By equation (iii) of Proposition \ref{prop:jac2}, the first derivatives of $g_{r,s}$ and $\hat g_{r,s}$ are given by
	\begin{eqnarray*}
		g_{r,s}'(x) &=&  f'(x) \left(  \left[\frac{d}{dy}  P_r^{(\ell,0)}(y)\right]_{y = f(x)} P_s^{(\ell,0)}(f(x))  + P_r^{(\ell,0)}(f(x))   \left[\frac{d}{dy}  P_s^{(\ell,0)}(y)\right]_{y = f(x)} \right) \\
		&=& \frac{(1-x)^{-\tfrac{\ell-1}{\ell}} }{\ell} \bigg( (r+\ell+1) P_{r-1}^{(\ell+1,1)}(f(x)) P_s^{(\ell,0)}(f(x)) + (s+\ell+1) P_r^{(\ell,0)}(f(x)) P_{s-1}^{(\ell+1,1)}(f(x))    \bigg),
	\end{eqnarray*}
	and
	$$ \hat g_{r,s}'(x)  = g_{r,s}'(x) (1-x)^{\tfrac{1}{\ell}} -2 g_{r,s}(x)  \frac{(1-x)^{-\tfrac{\ell-1}{\ell}} }{\ell}.$$
	By equation (ii) of Proposition \ref{prop:jac2}, and the inequality ${x \choose y } \le \left( \frac{e x}{y} \right)^y$, $x,y \in \mathbb{N}$, $y \le x$,
	\begin{eqnarray*}
		\max_{x \in[0,x_{j^*}]} | g'_{r,s}(x)| &\le& \frac{(1-x_{j^*})^{-\tfrac{\ell-1}{\ell}} }{\ell}  \bigg( (r+\ell+1) {r+\ell \choose \ell+1} {s+\ell \choose \ell}   +  (s+\ell+1) {r+\ell \choose \ell}   {s+\ell \choose \ell+1}   \bigg) \\
		&\le& 2 \, \frac{m+\ell}{\ell} \left(\left( \frac{\ell+5/2}{2m + 2\ell - 1}\right)^{2\ell} - \frac{1}{k} \right)^{-\frac{\ell-1}{\ell}} \left( \frac{e(m+\ell-1)}{\ell} \right)^{2\ell+1},
	\end{eqnarray*}
	and
	\begin{eqnarray*}
		\max_{x \in [0,x_{j^*}]} \left| \hat g'_{r,s}(x) \right|  &\le&  \max_{x \in[0,x_{j^*}]} | g'_{r,s}(x)| + 2\frac{(1-x_{j^*})^{-\tfrac{\ell-1}{\ell}} }{\ell} {r+\ell \choose \ell}   {s+\ell \choose \ell} \\
		&\le& 4 \, \frac{m+\ell}{\ell} \left(\left( \frac{\ell+5/2}{2m + 2\ell - 1}\right)^{2\ell} - \frac{1}{k} \right)^{-\frac{\ell-1}{\ell}} \left( \frac{e(m+\ell-1)}{\ell} \right)^{2\ell+1}.
	\end{eqnarray*}
	Therefore, both $\max_{x \in[0,x_{j^*}]} | g'_{r,s}(x)|$ and $\max_{x \in [0,x_{j^*}]} | \hat g'_{r,s}(x)| $ are $o(m^{4.002 \ell})$. Then, by Proposition \ref{prop:rsum}, equation (ii) of Proposition \ref{prop:jac2} and monotonicity on the interval $[x_{j^*},1]$, we have
	\begin{eqnarray*}
		\left| \frac{1}{k} \sum_{j=1}^k g_{r,s}(x_j) - \int_{0}^1  g_{r,s}(x) \, dx \right| &\le& \left| \frac{1}{k} \sum_{j=1}^{j^*} g_{r,s}(x_j) - \int_{0}^{x_{j^*}}  g_{r,s}(x) \, dx \right|  + \left| \frac{1}{k} \sum_{j = j^* +1}^{k}  g_{r,s}(x_j) - \int_{a}^1 g_{r,s}(x) \, dx \right| \\
		&\le& \frac{1+o(1)}{2 k m^{4.002\ell}} + \frac{ g_{r,s}(1)}{k} \, \le \, \frac{1+o(1)}{2 m^{.002\ell}} + \frac{ 1}{k} \left( \frac{e(m+\ell+1)}{\ell+1} \right)^{2(\ell+1)} \, \le \, \frac{1+o(1)}{2 m^{.002 \ell}},
	\end{eqnarray*}
	and, similarly,
	\begin{eqnarray*}
		\left| \frac{1}{k} \sum_{j=1}^k \hat g_{r,s}(x_j) - \int_{0}^1 \hat g_{r,s}(x) \, dx \right| &\le& \left| \frac{1}{k} \sum_{j=1}^{j^*} \hat g_{r,s}(x_j) - \int_{0}^{x_{j^*}} \hat g_{r,s}(x) \, dx \right|  + \left| \frac{1}{k} \sum_{j=j^*+1}^{k} \hat g_{r,s}(x_j) - \int_{x_{j^*}}^1 \hat g_{r,s}(x) \, dx \right| \\
		&\le& \frac{1+o(1)}{2 m^{.002\ell}} +\frac{ \hat g_{r,s}(x_{j^*})}{k} \, \le \, \frac{1+o(1)}{2 m^{.002\ell}} +\frac{ g_{r,s}(1)}{k} \, \le \,  \frac{1+o(1)}{ 2 m^{.002 \ell}}.
	\end{eqnarray*}
	Let us denote this upper bound by $M = (1+o(1))/(2m^{.002 \ell})$. By using the substitution $x = 1 - (\tfrac{1-y}{2})^{\ell}$, we have
	$$	\int_{0}^1 \hat g_{r,s}(x) \,  dx = \frac{\ell}{2^{\ell}} \langle P_r^{(\ell,0)}, P_s^{(\ell,0)} \rangle_{\ell,0} \quad \text{and} \quad \int_{0}^1 g_{r,s}(x) \,  dx = \frac{\ell}{2^{\ell}} \langle P_r^{(\ell,0)}, P_s^{(\ell,0)} \rangle_{\ell-1,0} .$$
	Then
	$$ \max_{\substack{P \in \mathcal{P}_{m-1} \\ P \ne 0}} \frac{\sum_{j=1}^k P^2(f(x_j))}{\sum_{j=1}^k P^2(f(x_j))(1-f(x_j))} \le \max_{ \substack{a \in \mathbb{R}^{m} \backslash 0 \\ |\epsilon_{r,s} | \le M \\ |\hat \epsilon_{r,s} | \le M}} \frac{\sum_{r,s=0}^{m-1} a_r a_s \left[ \frac{\ell}{2^{\ell}} \langle P_r^{(\ell,0)}, P_s^{(\ell,0)} \rangle_{\ell-1,0} + \epsilon_{r,s} \right]}{\sum_{r,s=0}^{m-1} a_r a_s \left[ \frac{\ell}{2^{\ell}} \langle P_r^{(\ell,0)}, P_s^{(\ell,0)} \rangle_{\ell,0} +\hat \epsilon_{r,s} \right] }.$$
	
	Letting $\tilde a_r = a_r \|P_r^{(\ell,0)} \|_{\ell,0}$, $r = 0,...,m-1$, and noting that, by equation (i) of Proposition \ref{prop:jac2}, $\|P_r^{(\ell,0)} \|_{\ell,0}=\left( \frac{2^{\ell+1}}{2r+\ell+1} \right)^{1/2}$, we obtain the bound
	$$ \max_{\substack{P \in \mathcal{P}_{m-1} \\ P \ne 0}} \frac{\sum_{j=1}^k P^2(f(x_j))}{\sum_{j=1}^k P^2(f(x_j))(1-f(x_j))} \le \max_{ \substack{\tilde a \in \mathbb{R}^{m} \backslash 0 \\ |\epsilon_{r,s} | \le \epsilon \\ |\hat \epsilon_{r,s} | \le \epsilon}}  \frac{\sum_{r,s=0}^{m-1} \tilde a_r \tilde a_s \left[ \frac{\langle P_r^{(\ell,0)}, P_s^{(\ell,0)} \rangle_{\ell-1,0} }{ \|P_r^{(\ell,0)}\|_{\ell,0} \|P_s^{(\ell,0)}\|_{\ell,0}  }  +  \epsilon_{r,s} \right]}{\sum_{r=0}^{m-1} \tilde a_r^2 + \sum_{r,s=0}^{m-1} \tilde a_r \tilde a_s  \hat \epsilon_{r,s}},$$
	where
	$$\epsilon = \frac{M(2m+\ell-1)}{\ell} = (1+o(1)) \frac{2 m +\ell-1}{2 m^{.002\ell}\ell} = o(1/m).$$
	Let $B \in \mathbb{R}^{m \times m}$ be given by $B(r,s) =\langle P_r^{(\ell,0)},P_s^{(\ell,0)}\rangle_{\ell-1,0} \|P_r^{(\ell,0)} \|^{-1}_{\ell,0}\|P_s^{(\ell,0)} \|^{-1}_{\ell,0}$, $r,s = 0,...,m-1$. By Proposition \ref{prop:intmax} applied to $\omega^{\ell-1,0}(x)$ and equation (iv) of Proposition \ref{prop:jac2}, we have
	$$ \max_{ \substack{\tilde a \in \mathbb{R}^{m} \backslash 0 }}  \frac{\tilde a^T B \tilde a}{\tilde a^T \tilde a}   \le \frac{ 1}{1-\sqrt{1-\left(\frac{\ell+1/2}{m + \ell - 1/2}\right)^2}} \le 2 \left(\frac{m + \ell - 1/2}{\ell+1/2}\right)^2.$$

	This implies that
	\begin{eqnarray*}
		\max_{\substack{P \in \mathcal{P}_{m-1} \\ P \ne 0}} \frac{\sum_{j=1}^k P^2(f(x_j))}{\sum_{j=1}^k P^2(f(x_j))(1-f(x_j))}  &\le& \max_{\substack{\tilde a \in \mathbb{R}^{m} \\ \tilde a \ne 0}}  \frac{\tilde a^T \left(B + \epsilon \mathds{1} \mathds{1}^T \right) \tilde a}{\tilde a^T \left( I - \epsilon \mathds{1} \mathds{1}^T \right) \tilde a }  \\ &\le& \frac{1}{1-\epsilon m} \max_{\substack{\tilde a \in \mathbb{R}^{m} \\ \tilde a \ne 0 }}  \left[  \frac{\tilde a^T B \tilde a}{ \tilde a^T \tilde a}\right] + \frac{\epsilon m}{1-\epsilon m} \\
		&\le& \frac{2}{1-\epsilon m} \left(\frac{m + \ell - 1/2}{\ell+1/2}\right)^2+ \frac{\epsilon m}{1-\epsilon m},
	\end{eqnarray*}
	and that, with probability $1-o(1/n)$, 
	$$	\min_{\substack{P \in \mathcal{P}_{m-1} \\ P \ne 0}}  \frac{\sum_{j=1}^{k} \hat Y_j P^2\left(f(x_j)\right) \left( 1-f(x_j)\right) }{\sum_{j=1}^{k} \hat Y_j P^2\left(f(x_j)\right) } \ge  (1-o(1))  \frac{.998}{4} \left(\frac{\ell+1/2}{m + \ell - 1/2}\right)^2 \ge \frac{.015 \ln^2 n}{m^2 \ln^2 \ln n} .$$
	This completes the proof.
\end{proof}

In the previous proof, a number of very small constants were used, exclusively for the purpose of obtaining an estimate with constant as close to $1/64$ as possible. These constants can be replaced by more practical numbers (that begin to exhibit asymptotic convergence for reasonably sized $n$), at the cost of a mildly worse constant. This completes the analysis of asymptotic lower bounds. 

We now move to upper bounds for relative error in the $p$-norm. Our estimate for relative error in the one-norm is of the same order as the estimate (\ref{eqn:kw}), but with an improved constant. Our technique for obtaining these estimates differs from the technique of \cite{kuczynski1992estimating} in one key way. Rather than integrating first by $b_1$ and using properties of the arctan function, we replace the ball $B^n$ by $n$ chi-square random variables on $[0,\infty)^n$, and iteratively apply Cauchy-Schwarz to our relative error until we obtain an exponent $c$ for which the inverse chi-square distribution with one degree of freedom has a convergent $c^{th}$ moment.

\begin{lemma} \label{lm:upper_rel}
	Let $n \ge 100$, $m \ge 10$, and $p \ge 1$. Then
	$$\max_{A \in \mathcal{S}^n} \E_{b \sim \mathcal{U}(S^{n-1})} \left[ \left( \frac{\lambda_1(A) - \lambda_1^{(m)}(A,b)}{\lambda_1(A)-\lambda_n(A)}\right)^p \right]^{\frac{1}{p}}  \le .068 \, \frac{ \ln^2 \left(n(m-1)^{8p}\right) }{(m-1)^2}.$$
\end{lemma}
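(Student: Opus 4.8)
The plan is to work from the chi-square representation (\ref{eqn:chi}). After the harmless normalization $\lambda_1=1$, $\lambda_n=0$, the quantity to bound is $\E_Y\big[\,\min_{P\in\mathcal{P}_{m-1}(1)}\big(\sum_{i=2}^{n}Y_iP^2(\lambda_i)(1-\lambda_i)\big/\sum_{i=1}^{n}Y_iP^2(\lambda_i)\big)^p\,\big]^{1/p}$ with $Y_1,\dots,Y_n\sim\chi^2_1$ independent. I would drop the minimum and insert a single explicit competitor: the rescaled degree-$(m-1)$ Chebyshev polynomial $P_*(x)=T_{m-1}\!\big(\tfrac{2x-1+t}{1-t}\big)\big/T_{m-1}\!\big(\tfrac{1+t}{1-t}\big)$, where $t\in(0,1)$ is a parameter to be optimized. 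Then $P_*(1)=1$, so the denominator $D:=\sum_{i=1}^{n}Y_iP_*^2(\lambda_i)$ satisfies $D\ge Y_1$, and $|P_*(x)|\le\rho(t):=1/T_{m-1}\!\big(\tfrac{1+t}{1-t}\big)$ on $[0,1-t]$, where the classical estimate $T_{m-1}(1+\delta)\ge\tfrac12(1+\sqrt{2\delta})^{m-1}$ makes $\rho(t)$ decay like $(1+2\sqrt t)^{-(m-1)}$ as $m$ grows.

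The key device — replacing the arctangent integration of \cite{kuczynski1992estimating}, and the reason the argument tolerates arbitrary $p$-norms — is to apply Cauchy--Schwarz iteratively to the Rayleigh quotient before integrating. For $a\ge 0$ set $N_a=\sum_{i=2}^{n}Y_iP_*^2(\lambda_i)(1-\lambda_i)^a$, so the relative error equals $N_1/D$ and $N_0\le D$. Cauchy--Schwarz gives $N_a^2\le N_{a-1}N_{a+1}$, hence $a\mapsto\log N_a$ is convex and $N_1\le N_0^{1-1/k}N_k^{1/k}$ for every integer $k\ge 1$. Splitting the sum defining $N_k$ at the cut $\lambda_i=1-t$ — where $(1-\lambda_i)^k<t^k$ on the upper piece and $|P_*(\lambda_i)|\le\rho(t)$ on the lower piece — yields $N_k\le t^kN_0+\rho(t)^2\sum_{i=1}^nY_i$; combining with $N_0\le D$, $D\ge Y_1$, and subadditivity of $x\mapsto x^{1/k}$ gives the pointwise bound
$$ \frac{\lambda_1(A)-\lambda_1^{(m)}(A,b)}{\lambda_1(A)-\lambda_n(A)}=\frac{N_1}{D}\le\Big(\frac{N_k}{D}\Big)^{1/k}\le t+\rho(t)^{2/k}\Big(\frac{\sum_{i=1}^nY_i}{Y_1}\Big)^{1/k}. $$

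Now I would take $p$-th moments. The point of the exponent $1/k$ is that $\E\big[(\sum_iY_i/Y_1)^{p/k}\big]=\E\big[(\sum_iY_i)^{p/k}\big]\,\E[Y_1^{-p/k}]$ is finite precisely when $p/k<\tfrac12$; taking $k$ to be a suitable integer of size $\Theta(p)$ keeps $q:=p/k$ bounded below $\tfrac12$, so that $\E[Y_1^{-q}]=2^{-q}\Gamma(\tfrac12-q)/\sqrt\pi$ is an absolute constant (this is exactly where the $y^{-1/2}$ singularity of the $\chi^2_1$ density is being used), while $\E\big[(\sum_iY_i)^{q}\big]\le n^{q}$ by Jensen. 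Minkowski's inequality then gives $\E[(\text{rel.\ err.})^p]^{1/p}\le t+\rho(t)^{2/k}(Cn)^{1/k}$ for an absolute $C$, which has the form $t+A\,e^{-\beta\sqrt t}$ with $\log A\asymp(\log n)/k$ and $\beta\asymp(m-1)/k$. Minimizing $u\mapsto u^2+Ae^{-\beta u}$ over $u=\sqrt t$ gives optimal $u_*\approx\beta^{-1}\log(\beta A)$ and value $\approx u_*^2=\tfrac1{16}\big(k\log(\beta A)/(m-1)\big)^2$. Keeping honest track of the constants and using $n\ge 100$, $m\ge 10$, the choice $k\asymp 8p$ balances the $n^{1/k}$ and $\rho(t)^{2/k}$ contributions so that $k\log(\beta A)\le\log\big(n(m-1)^{8p}\big)$ (this is where the exponent $8p$ appears), giving leading constant $\tfrac1{16}=.0625$; the remaining slack up to $.068$ absorbs the $\log\log$-order error in the approximate minimizer, the rounding of $k$ to an integer, the lower-order correction to the Chebyshev estimate, and the constant $C$.

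The main obstacle is precisely the non-integrability of the reciprocal $\chi^2_1$: the naive bound $\text{rel.\ err.}\le t+\rho(t)^2\sum_iY_i/Y_1$ has infinite $p$-th moment for every $p\ge\tfrac12$, so the quantitative heart is the iterated Cauchy--Schwarz step, which trades $\rho(t)^2$ for $\rho(t)^{2/k}$ — a negligible loss because $\rho$ is already exponentially small in $(m-1)\sqrt t$ — in exchange for a reciprocal exponent $1/k<1/(2p)$ with finite $p$-th moment. The only other delicate point is squeezing out the specific constant $.068$: this requires the explicit $\chi^2_1$ density to control $\E[Y_1^{-q}]$, an honest saddle-point estimate for the optimization in $t$, and separating off the trivial regime in which the claimed bound already exceeds $1$ (where the lemma is immediate, since the relative error never is).
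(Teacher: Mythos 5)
Your proposal follows essentially the same route as the paper's proof: the chi-square representation, the iterated Cauchy--Schwarz device (your log-convexity of $N_a$ with $k\asymp 8p$ plays the role of the paper's choice $2^q\in(2p,4p]$) to push the inverse-$\chi^2_1$ exponent below $1/2$, the identical Chebyshev competitor scaled to $[0,1-t]$ (the paper's $[0,\beta]$ with $\beta=1-t$), the same spectral split, and the same balancing of $t$ that produces the leading constant $1/16$ and the exponent $8p$, with the numerical slack up to $.068$ absorbing the remaining constants for $n\ge 100$, $m\ge 10$. One cosmetic caveat: your step $\E\big[(\sum_i Y_i/Y_1)^{p/k}\big]=\E\big[(\sum_i Y_i)^{p/k}\big]\,\E\big[Y_1^{-p/k}\big]$ is not an equality when the sum includes $Y_1$, but it holds as the needed inequality (or simply restrict the numerator to indices with $\lambda_i\le 1-t$, which excludes $i=1$, exactly as the paper does).
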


\begin{proof}
	Without loss of generality, suppose $\lambda_1(A) =1$ and $\lambda_n(A) = 0$. By repeated application of Cauchy Schwarz,
	$$\frac{\sum_{i=1}^n y_i Q^2(\lambda_i)(1 - \lambda_i)}{ \sum_{i=1}^n y_i Q^2(\lambda_i)} \le \left[ \frac{\sum_{i=1}^n y_i Q^2(\lambda_i)(1 - \lambda_i)^2}{ \sum_{i=1}^n y_i Q^2(\lambda_i)} \right]^{\frac{1}{2}} \le \left[  \frac{\sum_{i=1}^n y_i Q^2(\lambda_i)(1 - \lambda_i)^{2^q}}{ \sum_{i=1}^n y_i Q^2(\lambda_i)} \right]^{\frac{1}{2^q}}$$
	for $q \in \mathbb{N}$. Choosing $q$ to satisfy $2p < 2^q \le 4p$, and using equation (\ref{eqn:chi}) with polynomial normalization $Q(1) = 1$, we have
	\begin{eqnarray*}
		\E \left[ \left( \frac{\lambda_1 - \lambda_1^{(m)}}{\lambda_1-\lambda_n}\right)^p \right]^{\frac{1}{p}} &\le& \min_{Q \in \mathcal{P}_{m-1}(1)} \left[ \int_{[0,\infty)^n} \left( \frac{\sum_{i=2}^n y_i Q^2(\lambda_i)(1 - \lambda_i)^{2^q}}{ \sum_{i=1}^n y_i Q^2(\lambda_i)} \right)^{\frac{p}{2^q}} f_Y(y) \, dy \right]^{\frac{1}{p}} \\
		&\le& \min_{Q \in \mathcal{P}_{m-1}(1)} \left[ \int_{[0,\infty)^n} \left( \frac{\sum_{i: \lambda_i < \beta} y_i Q^2(\lambda_i)(1 - \lambda_i)^{2^q}}{ \sum_{i=1}^n y_i Q^2(\lambda_i)} \right)^{\frac{p}{2^q}} f_Y(y) \, dy \right]^{\frac{1}{p}} \\
		&&\qquad + \quad \left[ \int_{[0,\infty)^n} \left( \frac{\sum_{i: \lambda_i \ge \beta} y_i Q^2(\lambda_i)(1 - \lambda_i)^{2^q}}{ \sum_{i=1}^n y_i Q^2(\lambda_i)} \right)^{\frac{p}{2^q}} f_Y(y) \, dy \right]^{\frac{1}{p}} \\
	\end{eqnarray*}
	for any $ \beta \in (0,1)$. The integrand of the first term satisfies
	\begin{eqnarray*}
		\left( \frac{\sum_{i: \lambda_i < \beta} y_i Q^2(\lambda_i)(1 - \lambda_i)^{2^q}}{ \sum_{i=1}^n y_i Q^2(\lambda_i)} \right)^{\frac{p}{2^q}} &\le&  \left( \frac{\sum_{i: \lambda_i < \beta} y_i Q^2(\lambda_i)(1 - \lambda_i)^{2^q}}{ \sum_{i=1}^n y_i Q^2(\lambda_i)} \right)^{\frac{1}{4}}  \\
		&\le& \max_{x \in [0,\beta]} |Q(x)|^{1/2}(1 - x)^{2^{q-2}}  \left(\frac{\sum_{i:\lambda_i<\beta} y_i}{y_1}\right)^{\frac{1}{4}}, \\
	\end{eqnarray*}
	and the second term is always bounded above by $1-\beta$. We replace the minimizing polynomial in $\mathcal{P}_{m-1}(1)$ by $\widehat Q(x) = \frac{T_{m-1}\left(\frac{2}{\beta} x - 1\right)}{T_{m-1}\left(\frac{2}{\beta} -1\right)}$, where $T_{m-1}(\cdot)$ is the Chebyshev polynomial of the first kind. 
	The Chebyshev polynomials $T_{m-1}(\cdot)$ are bounded by one in magnitude on the interval $[-1,1]$, and this bound is tight at the endpoints. Therefore, our maximum is achieved at $x =0$, and
	$$	\max_{x \in [0,\beta]} | \widehat Q(x)|^{1/2}(1 - x)^{2^{q-2}} =  \left| T_{m-1}\left(\frac{2}{\beta} -1\right) \right|^{-1/2} .$$
	By the definition $T_{m-1}(x) = 1/2 \left( \left(x- \sqrt{x^2-1}\right)^{m-1} + \left(x+ \sqrt{x^2-1}\right)^{m-1}\right)$, $|x| \ge 1$, and the standard inequality $e^{2x} \le \frac{1 +x}{1-x}$, $x \in [0,1]$,
	\begin{eqnarray*}
		T_{m-1}\left(\frac{2}{\beta} -1\right) &\ge& \frac{1}{2} \left(\frac{2}{\beta} -1 + \sqrt{\left(\frac{2}{\beta} -1\right)^2 -1} \right)^{m-1} = \frac{1}{2} \left(\frac{1+\sqrt{1-\beta}}{1-\sqrt{1-\beta}}\right)^{m-1} \\ &\ge& \frac{1}{2} \exp \left\{2 \sqrt{1-\beta} \, (m-1) \right\}.
	\end{eqnarray*}
	In addition,
	$$ \int_{[0,\infty)^n} \left(\frac{\sum_{i=2}^n y_i}{y_1}\right)^{1/4} f_Y(y) \, dy =\frac{\Gamma\left(n/2 -1/4 \right)\Gamma(1/4)}{\Gamma(n/2-1/2) \Gamma(1/2)} \le \frac{\Gamma(1/4) }{2^{1/4} \Gamma(1/2)} n^{1/4},$$
	which gives us
	$$  \E \left[ \left( \frac{\lambda_1 - \lambda_1^{(m)}}{\lambda_1 -\lambda_n}\right)^p \right]^{\tfrac{1}{p}} \le \left[\frac{2^{1/4} \Gamma(1/4)}{\Gamma(1/2)} n^{1/4} \right]^{1/p} e^{- \gamma (m-1) /p} + \gamma^2,$$
	where $\gamma = \sqrt{1-\beta}$. Setting $\gamma = \frac{p}{m-1} \ln \left( n^{1/4p} (m-1)^2 \right)$ (assuming $\gamma<1$, otherwise our bound is already greater than one, and trivially holds), we obtain
	\begin{eqnarray*}
		\E \left[ \left( \frac{\lambda_1 - \lambda_1^{(m)}}{\lambda_1 -\lambda_n}\right)^p \right]^{1/p} &\le& \frac{\left(\frac{2^{1/4} \Gamma(1/4)}{\Gamma(1/2)}\right)^{1/p} +  p^2 \ln^2 (n^{1/4p}(m-1)^2) }{(m-1)^2} \\
		&=&   \frac{\left(\frac{2^{1/4} \Gamma(1/4)}{\Gamma(1/2)}\right)^{1/p} +  \frac{1}{16} \ln^2 \left(n(m-1)^{8p} \right) }{(m-1)^2} \\
		&\le& .068 \, \frac{ \ln^2 \left(n(m-1)^{8p}\right) }{ (m-1)^2},
	\end{eqnarray*}
	for $m \ge 10$, $n \ge 100$. This completes the proof.
	
	
\end{proof}

A similar proof, paired with probabilistic bounds on the quantity $\textstyle \sum_{i=2}^n Y_i/Y_1$, where $Y_1,...,Y_n \sim \chi^2_1$, gives a probabilistic upper estimate. Combining the lower bounds in Lemmas \ref{lm:msquared_lb} and \ref{lm:lowerbound_mix} with the upper bound of Lemma \ref{lm:upper_rel} completes the proof of Theorem \ref{thm:main}.

\section{Distribution Dependent Bounds}

In this section, we consider improved estimates for matrices with certain specific properties. First, we show that if a matrix $A$ has a reasonable number of eigenvalues near $\lambda_1(A)$, then we can produce an error estimate that depends only on the number of iterations $m$. In particular, we suppose that the eigenvalues of our matrix $A$ are such that, once scaled, there are at least $n/(m-1)^\alpha$ eigenvalues in the range $[
\beta,1]$, for a specific value of $\beta$ satisfying $1-\beta = O\left( m^{-2} \ln^2 m  \right)$. For a large number of matrices for which the Lanczos method is used, this assumption holds true. Under this assumption, we prove the following error estimate.

\begin{theorem} \label{thm:eigclust}
	Let $A \in \mathcal{S}^n$, $m \ge 10$, $p\ge 1$, $\alpha > 0$, and $n \ge m(m-1)^{\alpha}$. If
	$$\# \left\{ \lambda_i(A) \, \bigg| \, \frac{\lambda_1(A)-\lambda_i(A)}{\lambda_1(A) - \lambda_n(A)} \le \left(\frac{(2p+\alpha/4) \ln(m-1)}{m-1} \right)^2 \right\} \ge   \frac{n}{(m-1)^\alpha},$$
	then
	$$  \E_{b \sim \mathcal{U}(S^{n-1})} \left[ \left( \frac{\lambda_1(A) - \lambda_1^{(m)}(A,b)}{\lambda_1(A) -\lambda_n(A)}\right)^p \right]^{\frac{1}{p}} \le   .077 \, \frac{(2p + \alpha/4)^2 \ln^2 (m-1)}{(m-1)^2}.$$
	In addition, if
	$$\# \left\{ \lambda_i(A) \, \bigg| \, \frac{\lambda_1(A)-\lambda_i(A)}{\lambda_1(A) - \lambda_n(A)} \le \left(\frac{(\alpha+2) \ln(m-1)}{4(m-1)} \right)^2 \right\} \ge   \frac{n}{(m-1)^\alpha},$$
	then
	$$  \bP_{b \sim \mathcal{U}(S^{n-1})} \left[  \frac{\lambda_1(A) - \lambda_1^{(m)}(A,b)}{\lambda_1(A)-\lambda_n(A)} \le .126 \, \frac{(\alpha+2)^2 \ln^2 (m-1)}{ (m-1)^2} \right] \ge 1 - O(e^{-m}).$$
\end{theorem}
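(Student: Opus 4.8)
The two parts of the theorem (an $L^p$‑in‑expectation bound and a high‑probability bound) follow the same route, which is the proof of Lemma~\ref{lm:upper_rel} with one extra idea: the hypothesis ``$\ge n/(m-1)^\alpha$ eigenvalues within $\big(\tfrac{c\ln(m-1)}{m-1}\big)^2$ of the top'' lets us replace the single coordinate $y_1$ that appears in the denominator of the relative error by a chi‑square sum with $\ge n/(m-1)^\alpha$ degrees of freedom, and it is exactly this replacement that removes the dimension dependence present in Theorem~\ref{thm:main}. Concretely, normalize so that $\lambda_1(A)=1$, $\lambda_n(A)=0$ and pass to the chi‑square representation~(\ref{eqn:chi}) (resp.~(\ref{eqn:chi_prob})). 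For any $Q\in\mathcal P_{m-1}(1)$ and $q\in\mathbb N$, repeated Cauchy--Schwarz gives
$$\frac{\sum_i y_i Q^2(\lambda_i)(1-\lambda_i)}{\sum_i y_i Q^2(\lambda_i)}\le\left[\frac{\sum_i y_i Q^2(\lambda_i)(1-\lambda_i)^{2^q}}{\sum_i y_i Q^2(\lambda_i)}\right]^{1/2^q},$$
and I would take $q$ with $2p<2^q\le4p$, so that $p/2^q\in[1/4,1/2)$ — the range in which the inverse $\chi^2_1$ distribution has a finite moment, which is what makes the argument robust to arbitrary $p$.

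For the test polynomial I would use the scaled, shifted Chebyshev polynomial $\widehat Q(x)=T_{m-1}(2x/\beta-1)/T_{m-1}(2/\beta-1)$ for a threshold $\beta$ chosen so that $1-\beta$ is of order $\big(\tfrac{(2p+\alpha/4)\ln(m-1)}{m-1}\big)^2$ — i.e.\ matching the clustering radius in the hypothesis up to a constant — exploiting $\widehat Q(1)=1$, $|\widehat Q|\le\delta:=1/T_{m-1}(2/\beta-1)$ on $[0,\beta]$, monotonicity of $\widehat Q$ on $[\beta,1]$, and $T_{m-1}(2/\beta-1)\ge\tfrac12\exp\{2\sqrt{1-\beta}\,(m-1)\}$, so that $\delta\le2\exp\{-2\sqrt{1-\beta}\,(m-1)\}$. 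Split the spectrum into eigenvalues in $[\beta,1]$ and in $[0,\beta)$. On $[\beta,1]$ one has $(1-\lambda_i)^{2^q}\le(1-\beta)^{2^q}$, so that part of the numerator is at most $(1-\beta)^{2^q}$ times the whole denominator and contributes at most $1-\beta$ (indeed, by locating the interior maximum of $\widehat Q^2(x)(1-x)^{2^q}$ at $1-x\approx 2^{q-1}\sqrt{1-\beta}/(m-1)$, a small constant multiple of $1-\beta$) to the root of the quotient. For the $[0,\beta)$ part, the numerator is $\le\delta^2\sum_{\lambda_i<\beta}y_i$, while for the denominator — instead of the crude $\sum_i y_i\widehat Q^2(\lambda_i)\ge y_1\widehat Q^2(1)=y_1$ of Lemma~\ref{lm:upper_rel} — keep the whole cluster $\mathcal C$ of hypothesized eigenvalues: $\sum_i y_i\widehat Q^2(\lambda_i)\ge\widehat Q^2(\beta)\sum_{i\in\mathcal C}y_i=\delta^2\sum_{i\in\mathcal C}y_i$, using $\#\mathcal C\ge n/(m-1)^\alpha$ and $\mathcal C\subset[\beta,1]$. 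Thus the $[0,\beta)$ quotient is at most $\sum_{\lambda_i<\beta}y_i\big/\sum_{i\in\mathcal C}y_i$, a ratio of a $\chi^2$ with $\le n$ and a $\chi^2$ with $\ge n/(m-1)^\alpha$ degrees of freedom, with no worse than $(m-1)^\alpha$ of residual $n$‑dependence; if the target constant forces the cluster cutoff to sit strictly below $\beta$, the quotient picks up a harmless extra factor $\delta^2/\widehat Q^2(\text{cutoff})$ that is exponentially small in $m$.

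Finally I would take expectations (resp.\ probabilities). For the $L^p$ bound, Minkowski and $p/2^q\ge1/4$ give $\E[(\text{rel.\ err.})^p]^{1/p}\le(1-\beta)\cdot(\text{const})+\E\big[(\sum_{\lambda_i<\beta}y_i/\sum_{i\in\mathcal C}y_i)^{1/4}\big]^{1/p}$, and the expectation is an explicit ratio of Gamma functions which, by log‑convexity of $\Gamma$, is $O((m-1)^{\alpha/4})$ (here is where $n\ge m(m-1)^\alpha$ enters, to guarantee $\#\mathcal C$ is large enough for that ratio to be controlled); the whole bound is then dimension‑free and of order $1-\beta$, and choosing $\beta$ and bounding the remaining absolute constants numerically for $m\ge10$, $p\ge1$ yields $.077$. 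For the probability statement I would instead invoke Proposition~\ref{prop:cher}: with probability $1-O(e^{-m})$ one has simultaneously $\sum_{i\in\mathcal C}y_i\ge\tfrac12\,n/(m-1)^\alpha$ (since $\#\mathcal C\ge n/(m-1)^\alpha\ge m$), $\sum_{\lambda_i<\beta}y_i\le 2n$, and $y_1=O(1)$, and on that event the purely deterministic polynomial estimate above gives the claimed $.126\,(\alpha+2)^2\ln^2(m-1)/(m-1)^2$ bound.

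The hard part is Steps~2--3 done with sharp constants: one must choose $\beta$ (and, if necessary, a slightly lower cluster cutoff) so that simultaneously (i) the eigenvalues in $[\beta,1]$ cost only the asserted small multiple of the clustering radius, (ii) the cluster contributes enough denominator mass to cancel the $n$‑dependence of $\sum_{\lambda_i<\beta}y_i$, and (iii) everything survives uniformly in $p\ge1$ — which pins $2^q$ to $\Theta(p)$ and thereby limits how much the Cauchy--Schwarz reduction can help — and in $\alpha>0$; balancing the Chebyshev growth rate against the $(m-1)^\alpha$ blow‑up of the chi‑square ratio and the target constants is where essentially all the effort goes, while the set‑up, the Cauchy--Schwarz reduction, and the Gamma/Chernoff estimates are routine.
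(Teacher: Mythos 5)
Your outline follows the paper's strategy closely: the chi-square representation, iterated Cauchy--Schwarz down to exponent $p/2^q\in[1/4,1/2)$, a scaled Chebyshev test polynomial, replacing the single coordinate $y_1$ of Lemma~\ref{lm:upper_rel} by the chi-square mass of the hypothesized cluster, and the Gamma-ratio/Chernoff estimates in which $n\ge m(m-1)^\alpha$ enters. But there is a genuine gap in your central estimate. You scale the Chebyshev polynomial to $[0,\beta]$ and take the cluster $\mathcal{C}\subset[\beta,1]$, then bound the denominator by $\widehat Q^2(\beta)\sum_{i\in\mathcal{C}}y_i=\delta^2\sum_{i\in\mathcal{C}}y_i$ with $\delta=1/T_{m-1}(2/\beta-1)$; since $\beta$ is the right edge of the Chebyshev interval, $\widehat Q^2(\beta)$ is no larger than the sup of $\widehat Q^2$ on $[0,\beta]$, so the two $\delta^2$'s cancel and your $[0,\beta)$ quotient is bounded only by $\sum_{\lambda_i<\beta}y_i\big/\sum_{i\in\mathcal{C}}y_i$. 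Its $1/4$-moment is $\approx 1.04\,(m-1)^{\alpha/4}\ge 1$, so after the $1/p$-th root this term is $\Theta\big((m-1)^{\alpha/(4p)}\big)$: it does not decay in $m$ at all, and your conclusion that ``the whole bound is of order $1-\beta$'' does not follow. The decisive ingredient is a separation between the interval on which the Chebyshev polynomial equioscillates and the interval containing the cluster. The paper uses $\widehat Q(x)=T_{m-1}\big(\tfrac{2x}{2\beta-1}-1\big)/T_{m-1}\big(\tfrac{2}{2\beta-1}-1\big)$, splits the spectrum at $2\beta-1$, and keeps the cluster in $[\beta,1]$, so that $\max_{[0,2\beta-1]}|\widehat Q|\,/\min_{[\beta,1]}|\widehat Q| = 1/T_{m-1}\big(\tfrac{1}{2\beta-1}\big)\le 1/T_{m-1}\big(\tfrac{2}{\beta}-1\big)\le 2e^{-2\sqrt{1-\beta}\,(m-1)}$; with the paper's choice of $\beta$ this is $\approx(m-1)^{-(4p+\alpha/2)}$, and its square root is exactly what cancels the $(m-1)^{\alpha/4}$ chi-square factor and yields the $(m-1)^{-2}$ decay (likewise $(m-1)^{-(\alpha+2)}$ beats $4(m-1)^{\alpha}$ in the probabilistic statement, which suffers from the same missing factor in your sketch).

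Your hedge does not repair this, and in fact has the direction reversed: if the cluster cutoff sits strictly \emph{below} $\beta$, it lies inside the Chebyshev interval, where $|\widehat Q(\text{cutoff})|\le\delta$ and may vanish at a zero of $\widehat Q$, so the factor $\delta^2/\widehat Q^2(\text{cutoff})$ is $\ge 1$ and possibly unbounded, not exponentially small. The exponential gain appears only when the cluster sits \emph{above} the right edge of the Chebyshev interval by a margin comparable to $\big(\ln(m-1)/(m-1)\big)^2$, which is precisely the two-threshold device ($2\beta-1$ for the polynomial, $\beta$ for the cluster) in the paper's proof; with that modification your argument goes through essentially as the paper's does. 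A secondary point: once the split is made, the near-$\lambda_1$ part of the spectrum contributes a term of size $2(1-\beta)$, i.e.\ a fixed multiple of the clustering radius itself, so the final constant is governed by that term rather than by the delicate balancing you describe, and your claim that the $[\beta,1]$ part costs only ``a small constant multiple of $1-\beta$'' is not justified by the stated argument.
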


\begin{proof}
	We begin by bounding expected relative error. The main idea is to proceed as in the proof of Lemma \ref{lm:upper_rel}, but take advantage of the number of eigenvalues near $\lambda_1$. For simplicity, let $\lambda_1 = 1$ and $\lambda_n = 0$. We consider eigenvalues in the ranges $[0,2\beta-1)$ and $[2\beta-1,1]$ separately, $1/2< \beta < 1$, and then make use of the lower bound for the number of eigenvalues in $[\beta,1]$. From the proof of Lemma \ref{lm:upper_rel}, we have 
	\begin{eqnarray*}
		\E \left[ \left( \frac{\lambda_1 - \lambda_1^{(m)}}{\lambda_1 -\lambda_n}\right)^p \right]^{\tfrac{1}{p}} 
		&\le& \min_{Q \in \mathcal{P}_{m-1}(1)} \left[ \int_{[0,\infty)^n} \left( \frac{\sum_{i: \lambda_i < 2\beta-1} y_i Q^2(\lambda_i)(1 - \lambda_i)^{2^q}}{ \sum_{i=1}^n y_i Q^2(\lambda_i)} \right)^{\frac{p}{2^q}} f_Y(y) \, dy \right]^{\frac{1}{p}} \\
		&&\qquad + \quad \left[ \int_{[0,\infty)^n} \left( \frac{\sum_{i: \lambda_i \ge 2\beta-1} y_i Q^2(\lambda_i)(1 - \lambda_i)^{2q}}{ \sum_{i=1}^n y_i Q^2(\lambda_i)} \right)^{\frac{p}{2^q}} f_Y(y) \, dy \right]^{\frac{1}{p}},
	\end{eqnarray*}
	where $q \in \mathbb{N}$, $2p < 2^q \le 4p$, $f_Y(y)$ is given by (\ref{eqn:chidensity}), and $\beta \in (1/2,1)$. The second term is at most $2(1-\beta)$, and the integrand of the first term is bounded above by
	\begin{eqnarray*}
		\left( \frac{\sum_{i: \lambda_i < 2\beta-1} y_i Q^2(\lambda_i)(1 - \lambda_i)^{2^q}}{ \sum_{i=1}^n y_i Q^2(\lambda_i)} \right)^{p/2^q} &\le&  \left( \frac{\sum_{i: \lambda_i < 2\beta-1} y_i Q^2(\lambda_i)(1 - \lambda_i)^{2^q}}{ \sum_{i=1}^n y_i Q^2(\lambda_i)} \right)^{1/4} \\
		&\le& \left( \frac{\sum_{i: \lambda_i < 2\beta-1} y_i Q^2(\lambda_i)(1 - \lambda_i)^{2^q}}{ \sum_{i:\lambda_i \ge \beta} y_i Q^2(\lambda_i)} \right)^{1/4}  \\
		&\le&  \frac{ \max_{x \in [0,2\beta-1]} \left|Q(x)\right|^{1/2} (1-x)^{2^{q-2}}}{\min_{x\in [\beta,1]}  \left|Q(x)\right|^{1/2} } \, \left(  \frac{\sum_{i: \lambda_i < 2\beta-1} y_i }{ \sum_{i:\lambda_i \ge \beta} y_i } \right)^{1/4} .
	\end{eqnarray*}
	Let $\sqrt{1-\beta} = \frac{(2p+\alpha/4) \ln(m-1)}{m-1}<1/4$ (if $\beta \le 1/2$, then our estimate is a trivial one, and we are already done). By the condition of the theorem, there are at least $n /(m-1)^\alpha$ eigenvalues in the interval $[\beta,1]$. Therefore,
	\begin{eqnarray*}
		\int_{[0,\infty)^n} \left( \frac{\sum_{i: \lambda_i < 2\beta-1} y_i }{ \sum_{i:\lambda_i \ge \beta} y_i } \right)^{1/4} f_Y(y) \, dy &\le& \E_{\hat Y \sim \chi^2_{n}} \left[\hat Y^{1/4}\right] \E_{\tilde Y \sim \chi^2_{\lceil n/ (m-1)^\alpha \rceil}} \left[\tilde Y^{-1/4}\right] \\
		&=&  \frac{\Gamma(n/2+1/4) \Gamma(\lceil n /(m-1)^\alpha \rceil /2-1/4)}{\Gamma(n/2)\Gamma(\lceil n /(m-1)^\alpha \rceil /2)} \\
		&\le& \frac{\Gamma(m(m-1)^\alpha/2+1/4) \Gamma(m /2-1/4)}{\Gamma(m(m-1)^\alpha/2)\Gamma(m /2)} \\
		&\le& 1.04  \, (m-1)^{\alpha/4}
	\end{eqnarray*}
	for $n \ge m(m-1)^{\alpha}$ and $m \ge 10$. Replacing the minimizing polynomial by $\widehat Q(x) = \frac{T_{m-1}\left( \frac{2x}{2\beta-1} - 1 \right)}{T_{m-1}\left( \frac{2}{2\beta-1} -1\right)}$,
	we obtain
	$$ \frac{ \max_{x \in [0,2\beta-1]} \left|\widehat Q(x)\right|^{\tfrac{1}{2}} (1-x)^{2^{q-2}}}{\min_{x\in [\beta,1]} \left| \widehat Q(x)\right|^{\tfrac{1}{2}} }  = \frac{1}{T_{m-1}^{1/2}\left( \frac{2\beta}{2\beta-1} - 1 \right)} \le \frac{1}{T_{m-1}^{1/2}\left( \frac{2}{\beta} - 1 \right)} \le \sqrt{2} e^{-\sqrt{1-\beta}(m-1)}.$$
	Combining our estimates results in the bound
	\begin{eqnarray*}
		\E \left[ \left( \frac{\lambda_1 - \lambda_1^{(m)}}{\lambda_1 -\lambda_n}\right)^p \right]^{\tfrac{1}{p}} 
		&\le& \left(1.04 \sqrt{2} (m-1)^{\alpha/4} \right)^{1/p} e^{-\sqrt{1-\beta}(m-1)/p} + 2(1-\beta)\\
		&=& \frac{(1.04\sqrt{2})^{1/p}}{(m-1)^2} + \frac{(2p + \alpha/4)^2 \ln^2 (m-1)}{(m-1)^2} \\
		&\le& .077 \, \frac{(2p + \alpha/4)^2 \ln^2 (m-1)}{(m-1)^2}
	\end{eqnarray*}
	for $m \ge 10$, $p \ge 1$, and $\alpha > 0$. This completes the bound for expected relative error. We now produce a probabilistic bound for relative error. Let $\sqrt{1-\beta} = \frac{(\alpha+2) \ln(m-1)}{4(m-1)}$.
	We have
	\begin{eqnarray*}
		\frac{ \lambda_1 - \lambda_1^{(m)}}{\lambda_1-\lambda_n} &=& \min_{Q \in \mathcal{P}_{m-1}(1)} \frac{\sum_{i=2}^n Y_i Q^2(\lambda_i) (1- \lambda_i)}{\sum_{i=1}^n Y_i Q^2(\lambda_i) } \\
		&\le& \min_{Q \in \mathcal{P}_{m-1}(1)} \frac{\max_{x \in [0 , 2\beta-1]} Q^2(x) (1-x)}{\min_{x\in [\beta,1]}  Q^2(x)  } \frac{\sum_{i: \lambda_i < 2\beta-1} Y_i}{\sum_{i: \lambda_i \ge \beta} Y_i} + 2(1-\beta) \\
		&\le& T_{m-1}^{-2} \left( \frac{2}{\beta} -1 \right)  \frac{\sum_{i: \lambda_i < 2\beta-1} Y_i}{\sum_{i: \lambda_i \ge \beta} Y_i} + 2(1-\beta) \\
		&\le& 4 \exp \{ -4 \sqrt{1-\beta} (m-1)\}  \frac{\sum_{i: \lambda_i < 2\beta-1} Y_i}{\sum_{i: \lambda_i \ge \beta} Y_i} + 2(1-\beta).
	\end{eqnarray*}
	By Proposition \ref{prop:cher},
	\begin{eqnarray*}
		\bP\left[\frac{\sum_{i: \lambda_i < 2\beta-1} Y_i}{\sum_{i: \lambda_i \ge \beta} Y_i} \ge 4 (m-1)^{\alpha} \right] &\le& \bP\left[ \sum_{i: \lambda_i < 2\beta-1} Y_i \ge 2 n \right] + \bP\left[ \sum_{i: \lambda_i \ge \beta} Y_i \le  \frac{n}{2(m-1)^{\alpha}}\right] \\
		&\le& \left(2/e \right)^{n/2} + \left( \sqrt{e}/2 \right)^{n/2(m-1)^{\alpha}} = O(e^{-m}).
	\end{eqnarray*}
	Then, with probability $1-O(e^{-m})$,
	$$\frac{ \lambda_1 - \lambda_1^{(m)}}{\lambda_1-\lambda_n} \,\le \, 16 (m-1)^\alpha e^{-4 \sqrt{1-\beta} (m-1)} +2(1-\beta) \,=\, \frac{16}{(m-1)^2} + \frac{(\alpha+2)^2 \ln^2 (m-1)}{8 (m-1)^2}.$$
	The $16/(m-1)^2$ term is dominated by the log term as $m$ increases, and, therefore, with probability $1-O(e^{-m})$,
	$$\frac{ \lambda_1 - \lambda_1^{(m)}}{\lambda_1-\lambda_n} \,\le\, .126 \, \frac{(\alpha+2)^2 \ln^2 (m-1)}{ (m-1)^2}.$$
	This completes the proof.
\end{proof}

The above theorem shows that, for matrices whose distribution of eigenvalues is independent of $n$, we can obtain dimension-free estimates. For example, the above theorem holds for the matrix from Example \ref{ex:tridiag}, for $\alpha = 2$.

When a matrix has eigenvalues known to converge to a limiting distribution as dimension increases, or a random matrix $X_n$ exhibits suitable convergence of its empirical spectral distribution $\textstyle L_{X_n} := 1/n \sum_{i=1}^n \delta_{\lambda_i(X_n)}$, improved estimates can be obtained by simply estimating the corresponding integral polynomial minimization problem. However, to do so, we first require a law of large numbers for weighted sums of independent identically distributed (i.i.d.) random variables. We recall the following result.

\begin{proposition}{(\cite{cuzick1995strong})}\label{prop:lawlarge}
	Let $a_1,a_2,... \in [a,b]$ and $X_1,X_2,...$ be i.i.d. random variables, with $\mathbb{E}[X_1] = 0$ and $\mathbb{E}[X_1^2] < \infty$. Then
	$\textstyle{\frac{1}{n} \sum_{i=1}^n a_i X_i \rightarrow 0}$ almost surely.
\end{proposition}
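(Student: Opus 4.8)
The hypotheses here are strong enough (uniformly bounded weights, finite second moment) that this should reduce to a classical statement about sums of independent, not necessarily identically distributed, random variables. The plan is to pass from the averaged weighted sum to an associated series with summable variances, invoke Kolmogorov's convergence theorem, and then apply Kronecker's lemma.

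First I would set $Y_i := a_i X_i$. Since the $a_i$ are deterministic and the $X_i$ are independent, the $Y_i$ are independent; moreover $\mathbb{E}[Y_i] = a_i\,\mathbb{E}[X_1] = 0$, and, writing $\sigma^2 := \mathbb{E}[X_1^2] = \mathrm{Var}(X_1) < \infty$ and $c := \max\{|a|,|b|\}$, one has $\mathrm{Var}(Y_i) = a_i^2 \sigma^2 \le c^2 \sigma^2$ for every $i$. Thus the $Y_i$ form an independent, mean-zero sequence with uniformly bounded variances.

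Next I would consider the rescaled variables $Z_i := Y_i/i$, which are again independent and mean-zero and satisfy $\sum_{i\ge1}\mathrm{Var}(Z_i) = \sum_{i\ge1}\mathrm{Var}(Y_i)/i^2 \le c^2\sigma^2\sum_{i\ge1} i^{-2} < \infty$. By Kolmogorov's convergence theorem (a consequence of Kolmogorov's maximal inequality), the series $\sum_{i\ge1} Z_i = \sum_{i\ge1} Y_i/i$ then converges almost surely. Applying Kronecker's lemma --- if $\sum_i y_i/i$ converges then $\tfrac1n\sum_{i=1}^n y_i\to0$ --- pathwise on the full-probability event where this series converges gives $\tfrac1n\sum_{i=1}^n Y_i = \tfrac1n\sum_{i=1}^n a_i X_i\to0$ almost surely, which is exactly the assertion.

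I do not expect a genuine obstacle: the argument is an assembly of standard ingredients, and the uniform boundedness of the weights together with the finite second moment is precisely what makes the variance series summable. The only nontrivial input, were one to insist on a self-contained proof, is Kolmogorov's maximal inequality behind the convergence theorem, which could instead be replaced by a blocking argument along $n_k = 2^k$ together with Borel--Cantelli and a within-block maximal estimate. Indeed, this proposition is precisely the special case of \cite{cuzick1995strong} in which the weights are uniformly bounded and $X_1$ has a finite second moment; the cited paper treats unbounded weights and weaker moment hypotheses, which is where the real difficulty there lies.
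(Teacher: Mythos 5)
Your argument is correct: with $Y_i = a_iX_i$ independent, mean zero, and $\mathrm{Var}(Y_i)\le c^2\sigma^2$ uniformly, the series $\sum_i Y_i/i$ has summable variances, Kolmogorov's convergence theorem gives its almost sure convergence, and Kronecker's lemma applied pathwise yields $\tfrac1n\sum_{i=1}^n a_iX_i\to0$ a.s. Note, however, that the paper does not prove this proposition at all: it is imported verbatim from the cited reference \cite{cuzick1995strong}, whose main theorem covers far more general weight sequences and weaker moment assumptions (the bounded-weight, finite-variance case you treat is the easiest corner of that result). So your route is genuinely different in the sense that it replaces the appeal to the literature by a short, self-contained derivation of exactly the special case the paper needs, using only classical ingredients (Kolmogorov's maximal inequality via the convergence theorem, plus Kronecker); this is a perfectly legitimate and arguably preferable justification here, whereas citing Cuzick buys generality that the paper never uses. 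Your identification of where the real difficulty of the cited paper lies (unbounded weights, lower moments) is also accurate. No gaps.
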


We present the following theorem regarding the error of random matrices that exhibit suitable convergence.

\begin{theorem} \label{thm:conv}
	Let $X_n \in \mathcal{S}^n$, $\Lambda(X_n) \in[a,b]$, $n = 1,2,...$ be a sequence of random matrices, such that $L_{X_n}$ converges in probability to $\sigma(x) \, dx$ in $L_2([a,b])$, where $\sigma(x) \in C([a,b])$, $a,b \in \text{supp}(\sigma)$.  Then, for all $m \in \mathbb{N}$ and $\epsilon >0$,
	$$ \lim_{n \rightarrow \infty} \mathbb{P} \bigg( \bigg| \frac{\lambda_1(X_n) - \lambda^{(m)}_1(X_n,b)}{\lambda_1(X_n) - \lambda_n(X_n)} - \frac{b-\xi(m)}{b-a} \bigg| > \epsilon \bigg) = 0,$$
	where $\xi(m)$ is the largest zero of the $m^{th}$ degree orthogonal polynomial of $\sigma(x)$ in the interval $[a,b]$.
\end{theorem}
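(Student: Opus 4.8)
The plan is to convert the relative error into the polynomial minimization problem via equation (\ref{eqn:chi_prob}) and then show that, under the convergence hypothesis, the random weighted sums appearing in the numerator and denominator concentrate around their "averaged" counterparts, which are integrals against $\sigma(x)\,dx$. Concretely, writing $A = X_n$ with eigenvalues $\lambda_i = \lambda_i(X_n)$ and $\tilde b = Q^T b$, after passing to chi-square variables $Y_1,\dots,Y_n \sim \chi^2_1$ we have
$$\frac{\lambda_1 - \lambda_1^{(m)}}{\lambda_1 - \lambda_n} = \min_{P \in \mathcal{P}_{m-1}(1)} \frac{\sum_{i=2}^n Y_i P^2(\lambda_i)(\lambda_1 - \lambda_i)}{(\lambda_1-\lambda_n)\sum_{i=1}^n Y_i P^2(\lambda_i)}.$$
First I would fix a basis of $\mathcal{P}_{m-1}$ — say $\{1,x,\dots,x^{m-1}\}$ — and observe that both sums are finite linear combinations, with coefficients depending only on $P$, of the $2m-1$ "moment" quantities $S_t := \frac1n\sum_{i=1}^n Y_i \lambda_i^t$, $t = 0,\dots,2m-2$ (and similarly $\frac1n\sum Y_i\lambda_i^t(\lambda_1-\lambda_i)$, which are differences of such). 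So it suffices to control these finitely many scalars.

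The key step is a two-part concentration argument for each $S_t$. Write $S_t = \frac1n\sum_i(Y_i-1)\lambda_i^t + \frac1n\sum_i \lambda_i^t = \frac1n\sum_i(Y_i-1)\lambda_i^t + \int x^t \,dL_{X_n}(x)$. For the first term, condition on the eigenvalues (which are random but bounded in $[a,b]$, so $\lambda_i^t \in [a^t,b^t]$ is a bounded deterministic-given-$X_n$ sequence) and apply Proposition \ref{prop:lawlarge} (Cuzick's law of large numbers) with $X_i = Y_i - 1$, which are i.i.d., mean zero, finite variance — this gives $\frac1n\sum_i(Y_i-1)\lambda_i^t \to 0$ a.s.\ conditionally, hence in probability. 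For the second term, $\int x^t\,dL_{X_n} = \int x^t\,dx\,L_{X_n} \to \int x^t\sigma(x)\,dx$ in probability since $x^t \in L_2([a,b])$ and $L_{X_n} \to \sigma\,dx$ in $L_2$. Combining, each $S_t$ converges in probability to $\mu_t := \int x^t\sigma(x)\,dx$, and likewise $\lambda_1 \to b$, $\lambda_n \to a$ (using $a,b \in \operatorname{supp}(\sigma)$ together with the $L_2$ convergence — this needs a short argument: $L_2$ convergence of $L_{X_n}$ to a density positive near the endpoints of its support forces the extreme eigenvalues to $a$ and $b$).

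Now I would pass from convergence of the $S_t$'s to convergence of the minimum. The map
$$(\mu_0,\dots,\mu_{2m-2},\lambda_1,\lambda_n) \longmapsto \min_{P\in\mathcal{P}_{m-1}(1)} \frac{\text{(quadratic form in coeffs of }P\text{ built from the data)}}{(\lambda_1-\lambda_n)\cdot\text{(quadratic form)}}$$
is continuous at the limit point, because the denominator quadratic form at the limit is $\int P^2(x)\sigma(x)\,dx > 0$ for $P\not\equiv 0$ (as $\sigma$ is continuous and $a,b\in\operatorname{supp}(\sigma)$), so we are minimizing a ratio of two positive-definite-to-positive-semidefinite quadratic forms over the compact-after-normalization set $\mathcal{P}_{m-1}(1)$, and such generalized Rayleigh minima depend continuously on the entries. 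At the limit the minimum equals
$$\min_{P\in\mathcal{P}_{m-1}(1)} \frac{\int P^2(x)(b-x)\sigma(x)\,dx}{(b-a)\int P^2(x)\sigma(x)\,dx} = \frac{1}{b-a}\left(b - \max_{P\in\mathcal{P}_{m-1},P\ne 0}\frac{\int x P^2(x)\sigma(x)\,dx}{\int P^2(x)\sigma(x)\,dx}\right),$$
and by Proposition \ref{prop:intmax} (after the affine rescaling of $[a,b]$ to $[-1,1]$) the inner maximum is the largest zero $\xi(m)$ of the $m$-th orthogonal polynomial of $\sigma$ on $[a,b]$. Hence the limit is $(b-\xi(m))/(b-a)$, and continuity plus the convergence-in-probability of the inputs gives convergence in probability of the whole expression, which is the claim.

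The main obstacle is making the "convergence of inputs implies convergence of the constrained minimum" step fully rigorous and uniform: one must ensure the denominator quadratic forms $\sum_i Y_i P^2(\lambda_i)$ stay uniformly bounded away from zero over $P\in\mathcal{P}_{m-1}(1)$ with high probability (so the ratio doesn't blow up on a small bad event), which requires a lower bound of the form $\frac1n\sum_i Y_i P^2(\lambda_i)\ge c\|P\|^2$ holding simultaneously for all such $P$ — this follows from the convergence of the $S_t$ to a positive-definite moment matrix together with a compactness argument on $\mathcal{P}_{m-1}(1)$, but needs care because $\mathcal{P}_{m-1}(1)$ is not compact (the coefficients are unbounded); the fix is to work instead with the homogeneous normalization $\|P\|_2 = 1$ and track that $P(1)$ (or rather the value used in the $\lambda_1$ normalization) is controlled, or simply to note both numerator and denominator scale the same way so one may minimize over the unit sphere of coefficients and handle the degenerate $P(1)=0$ directions separately.
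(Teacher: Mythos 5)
Your proposal is correct and follows essentially the same route as the paper: reduce to the $2m-1$ weighted moment sums in the monomial basis, control them via Proposition \ref{prop:lawlarge} together with the convergence of $L_{X_n}$, treat the result as a perturbed generalized Rayleigh quotient whose limit denominator matrix (the moment matrix of $\sigma$) is positive definite, and identify the limiting minimum through Proposition \ref{prop:intmax}. The compactness concern you raise at the end is handled in the paper exactly by the fix you suggest — it works with the homogeneous quotient over all coefficient vectors $\alpha \neq 0$ rather than the $P(1)=1$ normalization, so the uniform perturbation bound is scale-invariant and no degenerate directions need separate treatment.
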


\begin{proof}
	The main idea of the proof is to use Proposition \ref{prop:lawlarge} to control the behavior of $Y$, and the convergence of $L_{X_n}$ to $\sigma(x) \, dx$ to show convergence to our integral minimization problem. We first write our polynomial $P \in \mathcal{P}_{m-1}$ as $\textstyle P(x)  = \sum_{j=0}^{m-1} \alpha_j x^j$ and our unnormalized error as
	\begin{eqnarray*}
		\lambda_1(X_n) - \lambda^{(m)}_1(X_n,b) & = & \min_{\substack{P \in \mathcal{P}_{m-1} \\ P \ne 0}} \frac{\sum_{i=2}^n Y_i P^2(\lambda_i)(\lambda_1-\lambda_i) }{\sum_{i=1}^n Y_i P^2(\lambda_i)} \\ & = & \min_{\substack{\alpha \in \mathbb{R}^{m} \\ \alpha \ne 0}} \frac{\sum_{j_1,j_2 = 0}^{m-1} \alpha_{j_1} \alpha_{j_2} \sum_{i=2}^n Y_i \lambda_i^{j_1+j_2}(\lambda_1-\lambda_i) }{\sum_{j_1,j_2 = 0}^{m-1} \alpha_{j_1} \alpha_{j_2} \sum_{i=1}^n Y_i \lambda_i^{j_1+j_2} },
	\end{eqnarray*}
	where $Y_1,...,Y_n$ are i.i.d. chi-square random variables with one degree of freedom each. The functions $x^j$, $j = 0,...,2m-2$, are bounded on $[a,b]$, and so, by Proposition \ref{prop:lawlarge}, for any $\epsilon_1,\epsilon_2 >0$,
	$$\bigg| \frac{1}{n} \sum_{i=2}^n Y_i \lambda_i^{j}(\lambda_1-\lambda_i) - \frac{1}{n} \sum_{i=2}^n \lambda_i^{j}(\lambda_1-\lambda_i)  \bigg|< \epsilon_1, \qquad \qquad j = 0,...,2m-2,$$
	and
	$$\bigg| \frac{1}{n} \sum_{i=1}^n Y_i \lambda_i^{j}- \frac{1}{n} \sum_{i=1}^n \lambda_i^{j}  \bigg|< \epsilon_2, \qquad \qquad j = 0,...,2m-2,$$
	with probability $1-o(1)$. $L_{X_n}$ converges in probability to $\sigma(x) \, dx$, and so, for any $\epsilon_3,\epsilon_4>0$,
	$$  \bigg| \frac{1}{n} \sum_{i=2}^n \lambda_i^{j}(\lambda_1-\lambda_i)  - \int_{a}^b x^j (b-x) \sigma(x) \, dx \bigg| < \epsilon_3, \qquad  \qquad j = 0,...,2m-2 ,$$
	and
	$$  \bigg| \frac{1}{n} \sum_{i=1}^n \lambda_i^{j}  - \int_{a}^b x^j \sigma(x) \, dx \bigg| < \epsilon_4, \qquad \qquad  j = 0,...,2m-2 ,$$
	with probability $1-o(1)$. This implies that
	$$ \lambda_1(X_n) - \lambda^{(m)}_1(X_n,b) = \min_{\substack{\alpha \in \mathbb{R}^{m} \\ \alpha \ne 0} } \frac{\sum_{j_1,j_2 = 0}^{m-1} \alpha_{j_1} \alpha_{j_2} \left[ \int_{a}^b x^{j_1+j_2} (b-x) \sigma(x) \, dx + \hat E(j_1,j_2) \right] }{\sum_{j_1,j_2 = 0}^{m-1} \alpha_{j_1} \alpha_{j_2} \left[\int_{a}^b x^{j_1+j_2}  \sigma(x) \, dx  +  E(j_1,j_2) \right] }, $$
	where $|\hat E(j_1,j_2)|< \epsilon_1 + \epsilon_3$ and $| E(j_1,j_2)| < \epsilon_2 + \epsilon_4$, $j_1,j_2 = 0,...,m-1$, with probability $1-o(1)$.
	
	The minimization problem 
	$$	\min_{\substack{P \in \mathcal{P}_{m-1} \\ P \ne 0}} \frac{\int_{a}^b P^2(x) \left( \frac{b-x}{b-a} \right) \sigma(x) \, dx}{\int_{a}^b P^2(x)  \sigma(x) \, dx}$$
	corresponds to a generalized Rayleigh quotient $\frac{\alpha^T A \alpha}{\alpha^T B \alpha}$, where $A,B \in \mathcal{S}^m_{++}$ and $\lambda_{max}(A)$, $\lambda_{max}(B)$, and $\lambda_{min}(B)$ are all constants independent of $n$, and $\alpha = (\alpha_0,...,\alpha_{m-1})^T$. By choosing $\epsilon_1,\epsilon_2,\epsilon_3,\epsilon_4$ sufficiently small,
	\begin{eqnarray*}
		\left|\frac{\alpha^T (A + \hat E) \alpha}{\alpha^T (B+E) \alpha} - \frac{\alpha^T A \alpha}{\alpha^T B \alpha} \right|  & = & \left| \frac{(\alpha^T \hat E \alpha) \alpha^T B \alpha - (\alpha^T  E \alpha) \alpha^T A \alpha }{(\alpha^T B \alpha + \alpha^T  E \alpha) \alpha^T B \alpha} \right| \\
		& \le & \frac{(\epsilon_1 +\epsilon_3)m \lambda_{max}(B) +(\epsilon_2 +\epsilon_4)m \lambda_{max}(A)  }{(\lambda_{min}(B)-(\epsilon_2 + \epsilon_4)m) \lambda_{min}(B)}\le \epsilon
	\end{eqnarray*}
	for all $\alpha \in \mathbb{R}^{m}$ with probability $1-o(1)$. Applying Proposition \ref{prop:intmax} to the above integral minimization problem completes the proof.
\end{proof}

The above theorem is a powerful tool for explicitly computing the error in the Lanczos method for certain types of matrices, as the computation of the extremal eigenvalue of an $m \times m$ matrix (corresponding to the largest zero) is a nominal computation compared to one application of an $n$ dimensional matrix. In Section 6, we will see that this convergence occurs quickly in practice. In addition, the above result provides strong evidence that the inverse quadratic dependence on $m$ in the error estimates throughout this paper is not so much a worst case estimate, but actually indicative of error rates in practice. For instance, if the eigenvalues of a matrix are sampled from a distribution bounded above and below by some multiple of a Jacobi weight function, then, by equation (iv) Proposition \ref{prop:jac2} and Theorem \ref{thm:conv}, it immediately follows that the error is of order $m^{-2}$. Of course, we note that Theorem \ref{thm:conv} is equally applicable for estimating $\lambda_n$.

\section{Estimates for Arbitrary Eigenvalues and Condition Number}

Up to this point, we have concerned ourselves almost exclusively with the extremal eigenvalues $\lambda_1$ and $\lambda_n$ of a matrix. In this section, we extend the techniques of this paper to arbitrary eigenvalues, and also obtain bounds for the condition number of a positive definite matrix. The results of this section provide the first uniform error estimates for arbitrary eigenvalues and the first lower bounds for the relative error with respect to the condition number. Lower bounds for arbitrary eigenvalues follow relatively quickly from our previous work. However, our proof technique for upper bounds requires some mild assumptions regarding the eigenvalue gaps of the matrix. We begin with asymptotic lower bounds for an arbitrary eigenvalue, and present the following corollary of Theorem \ref{thm:main}. 

\begin{corollary}\label{cor:eig_lb}
	$$\max_{A \in \mathcal{S}^n}  \bP_{b \sim \mathcal{U}(S^{n-1})} \left[  \frac{\lambda_i(A) - \lambda_i^{(m)}(A,b)}{\lambda_1(A)-\lambda_n(A)} \ge   1-o(1) \right]  \ge 1 - o(1/n)$$
	for $m = o(\ln n)$,
	$$\max_{A \in \mathcal{S}^n}  \bP_{b \sim \mathcal{U}(S^{n-1})} \left[  \frac{\lambda_i(A) - \lambda_i^{(m)}(A,b)}{\lambda_1(A)-\lambda_n(A)} \ge   \frac{.015 \ln^2 n}{ m^2 \ln^2 \ln n} \right]  \ge 1 - o(1/n) $$
	for $m = \Theta(\ln n)$, and
	$$ \max_{A \in \mathcal{S}^n}  \bP_{b \sim \mathcal{U}(S^{n-1})} \left[  \frac{\lambda_i(A) - \lambda_i^{(m)}(A,b)}{\lambda_1(A)-\lambda_n(A)} \ge \frac{1.08}{m^2} \right]  \ge 1 - o(1/n)$$
	for $m = o\left(n^{1/2} \ln^{-1/2} n\right)$ and $\omega(1)$.
\end{corollary}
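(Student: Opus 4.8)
The plan is to reduce the three estimates for $\lambda_i$ to the corresponding estimates for $\lambda_1$ already established in Theorem \ref{thm:main}, by means of a block-diagonal construction; throughout, $i$ is regarded as fixed (the argument works verbatim whenever $i = o(m)$). For each $n$, let $A' \in \mathcal{S}^{n-i+1}$ be the hard matrix used in the relevant regime of Theorem \ref{thm:main} --- the matrix of Lemma \ref{lm:msquared_lb}, of Lemma \ref{lm:lowerbound_mix}, or of \cite[Theorem A.1]{simchowitz2018tight} --- but constructed with $m-i+1$ in place of $m$ and rescaled so that $\Lambda(A') \subset [-1,1]$. Choose distinct reals $\mu_1 > \cdots > \mu_{i-1}$ lying just above $\lambda_1(A')$, say in $(\lambda_1(A'),\lambda_1(A')+\epsilon)$ for a small $\epsilon > 0$, and set $A = \operatorname{diag}(\mu_1,\dots,\mu_{i-1},A') \in \mathcal{S}^n$. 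Then $\lambda_j(A) = \mu_j$ for $j < i$, $\lambda_i(A) = \lambda_1(A')$, $\lambda_n(A) = \lambda_n(A')$, and $\lambda_1(A) - \lambda_n(A) = \mu_1 - \lambda_n(A') \le (1+\epsilon)\bigl(\lambda_1(A') - \lambda_n(A')\bigr)$.

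The reduction itself uses the min--max formula (\ref{eqn:minmax}): bound $\lambda_i^{(m)}(A,b)$ above by the maximal Rayleigh quotient over the subspace $U = \{P(A)b : P \in \mathcal{P}_{m-1},\ P(\mu_1) = \cdots = P(\mu_{i-1}) = 0\} \subset \mathcal{K}_m(A,b)$, which has dimension exactly $m+1-i$ for almost every $b$ (here one uses that $A$ has more than $m$ distinct eigenvalues). Every $x \in U$ has vanishing component along the eigenvectors of $\mu_1,\dots,\mu_{i-1}$, so its Rayleigh quotient is at most $\lambda_i(A) = \lambda_1(A')$; writing $P(x) = \prod_{j<i}(x-\mu_j)R(x)$ with $R \in \mathcal{P}_{m-i}$ and passing to chi-square variables as in (\ref{eqn:chi_prob}) yields
$$\lambda_i(A) - \lambda_i^{(m)}(A,b) \ \ge\ \min_{\substack{R \in \mathcal{P}_{m-i} \\ R \ne 0}}\ \frac{\sum_{r \ge 2} c_r\,\hat Y_r\,R^2(\lambda'_r)\,(\lambda'_1 - \lambda'_r)}{\sum_{r \ge 1} c_r\,\hat Y_r\,R^2(\lambda'_r)}, \qquad c_r := \prod_{j<i}(\lambda'_r - \mu_j)^2,$$
where $\lambda'_1 > \lambda'_2 > \cdots$ are the distinct eigenvalues of $A'$ and $\hat Y_r$ is the chi-square group attached to $\lambda'_r$. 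This is exactly the unnormalized $\lambda_1 - \lambda_1^{(m-i+1)}$ error of $A'$, with the chi-square group of each eigenvalue reweighted by the positive constant $c_r$.

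To remove the reweighting, note that the $\mu_j$ are separated from $\Lambda(A')$, so the $c_r$ are bounded above and below by positive constants; I would therefore build $A'$ with the multiplicity of $\lambda'_r$ taken proportional to $1/c_r$, keeping $\lambda'_1 = \lambda_1(A')$ at multiplicity $1$ (its group enters only the denominator, where rescaling by the bounded constant $c_1$ does no harm). With this choice the reweighted groups $c_r \hat Y_r$ concentrate around a single value with probability $1 - o(1/n)$, just as the groups $\hat Y_r$ do in Lemmas \ref{lm:msquared_lb} and \ref{lm:lowerbound_mix}, and the Gauss-quadrature, orthogonal-polynomial, and composite-quadrature estimates of those proofs then apply to the reduced problem line by line, with $m-i+1$ in place of $m$. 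Passing from $m$ to $m-i+1$ iterations only helps (it shrinks the denominator $(m-i+1)^2$), while the factor $1/(1+\epsilon)$ from the normalization and the lower-order $o(1)$ terms are absorbed by the slack between the constants actually produced in those lemmas ($\tfrac{12+27\pi^2}{256}\approx 1.088$ and roughly $\tfrac{.998}{4}(.2495)^2 \approx .0155$) and the constants $1.08$ and $.015$ appearing in the corollary, for all sufficiently large $n$ --- which suffices, since the statement is asymptotic. The $m = o(\ln n)$ case follows in the same way from the first part of Theorem \ref{thm:main}.

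I expect the last step to be the main obstacle: verifying that the probabilistic concentration and the quadrature / orthogonal-polynomial estimates of Lemmas \ref{lm:msquared_lb} and \ref{lm:lowerbound_mix} genuinely survive the combined reweighting-and-multiplicity-adjustment --- that the products $c_r \hat Y_r$ behave as the near-uniform weights those arguments rely on, and that retaining one group at its original small multiplicity does not perturb the estimates. The min--max reduction and the chasing of constants are routine.
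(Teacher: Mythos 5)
Your reduction rests on the same idea as the paper's proof---append $i-1$ large eigenvalues to a hard $\lambda_1$ instance and use the min--max characterization to force $\lambda_i^{(m)}(A,b)$ below a $\lambda_1$-type quantity in dimension $n-i+1$---but your choice of \emph{distinct} $\mu_j$ strictly above $\lambda_1(A')$ introduces the reweighting $c_r=\prod_{j<i}(\lambda_r'-\mu_j)^2$, and the step on which everything then hinges, namely that the proofs of Lemmas \ref{lm:msquared_lb} and \ref{lm:lowerbound_mix} survive ``line by line'' after reweighting the chi-square groups and changing the multiplicities to be proportional to $1/c_r$, is asserted rather than carried out; you flag it yourself as the main obstacle. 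As written this is a genuine gap: the concentration events, the comparison against the Gauss--Legendre weights in Lemma \ref{lm:msquared_lb}, and the composite-quadrature/Jacobi-polynomial analysis in Lemma \ref{lm:lowerbound_mix} are proved for the specific multiplicity pattern of those lemmas, and none of the required re-derivation appears. (It is probably salvageable---for instance, in Lemma \ref{lm:msquared_lb} one has $c_r\ge c_1$ because every $\mu_j$ lies above the spectrum of $A'$, so the event $w_1\min_{j\ge2}\hat Y_j\ge w_m\hat Y_1$ already implies its reweighted analogue---but that verification is the substance of the proof and is missing.) A second, unambiguous failure is the $m=o(\ln n)$ case: the first bound in Theorem \ref{thm:main} is a black-box citation of \cite[Theorem A.1]{simchowitz2018tight} for a uniformly distributed initial vector, whereas your reduction produces a $\lambda_1$ problem whose initial vector is reweighted by the $c_r$; it therefore does not ``follow in the same way,'' since you have no access to the internals of that result to absorb the reweighting.

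The paper avoids all of this with one small change you missed: take $\mu_1=\dots=\mu_{i-1}$ \emph{equal} to $\lambda_1(A')$, i.e.\ give the top eigenvalue multiplicity $i$. Writing $\hat b$ for the projection of $b$ orthogonal to the first $i-1$ eigenvectors, the min--max inequality gives $\lambda_i^{(m)}(A,b)\le\lambda_1^{(m)}(A,\hat b)$, and since $\lambda_1(A)=\lambda_i(A)$ the right-hand side is exactly the extremal eigenvalue problem of dimension $n-i+1$ with a uniformly distributed initial vector (the component along the one surviving top eigenvector keeps its chi-square weight, so no $c_r$ appears). Lemmas \ref{lm:msquared_lb} and \ref{lm:lowerbound_mix}, and the cited result in the $m=o(\ln n)$ regime, then apply verbatim as black boxes, with no reweighting, no multiplicity surgery, and no constant-chasing beyond replacing $n$ by $n-i+1$.
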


\begin{proof}
	Let $\varphi_1,...,\varphi_n$ be the eigenvectors corresponding to $\lambda_1(A) \ge ... \ge \lambda_n(A)$, and $\hat b = b-\sum_{j=1}^{i-1} (\varphi_j^T b) b$. By the inequalities
	$$	\lambda^{(m)}_i (A,b) \le \max_{\substack{x \in \mathcal{K}_m(A,b) \backslash 0  \\ x^T \varphi_j = 0, \\ j = 1,...,i-1}}  \, \frac{x^T A x}{x^T x} \le  \max_{\substack{x \in \mathcal{K}_m\left(A,\hat b\right) \backslash 0}}  \, \frac{x^T A x}{x^T x} = \lambda_1^{(m)}\big(A,\hat b \big),$$
	the relative error 
	$$\frac{\lambda_i(A) - \lambda_i^{(m)}(A,b)}{\lambda_1(A) - \lambda_n(A)} \ge \frac{\lambda_i(A) - \lambda_1^{(m)}(A,\hat b )}{\lambda_1(A) - \lambda_n(A)}.$$
	The right-hand side corresponds to an extremal eigenvalue problem of dimension $n-i+1$. Setting the largest eigenvalue of $A$ to have multiplicity $i$, and defining the eigenvalues $\lambda_i,...,\lambda_n$ based on the eigenvalues (corresponding to dimension $n-i+1$) used in the proofs of Lemmas \ref{lm:msquared_lb} and \ref{lm:lowerbound_mix} completes the proof.
\end{proof}

Next, we provide an upper bound for the relative error in approximating $\lambda_i$ under the assumption of non-zero gaps between eigenvalues $\lambda_1,...,\lambda_i$.

\begin{theorem}\label{thm:arb_eig}
	Let $n \ge 100$, $m \ge 9+i$, $p \ge 1$, and $A \in \mathcal{S}^n$. Then
	$$ \E_{b \sim \mathcal{U}(S^{n-1})} \left[ \left( \frac{\lambda_i(A) - \lambda_i^{(m)}(A,b)}{\lambda_i(A)-\lambda_n(A)}\right)^p \right]^{1/p}  \le .068 \, \frac{ \ln^2 \left( \delta^{-2(i-1)} n(m-i)^{8p}\right) }{(m-i)^2}$$
	and
	$$ \bP_{b \sim \mathcal{U}(S^{n-1})} \left[  \frac{\lambda_i(A) - \lambda_i^{(m)}(A,b)}{\lambda_i(A)-\lambda_n(A)} \le .571 \; \frac{\ln^2\left( \delta^{-2(i-1)/3} n (m-i)^{2/3} \right)}{(m-i)^2} \right]  \ge 1 - o(1/n),$$
	where
	$$ \delta  = \frac{1}{2} \min_{k = 2,...,i} \frac{\lambda_{k-1}(A) - \lambda_{k}(A)}{\lambda_1(A) -\lambda_n(A)}.$$
\end{theorem}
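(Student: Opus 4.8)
The plan is to reduce both inequalities to the $i=1$ estimate of Lemma~\ref{lm:upper_rel} by deflating the top $i-1$ eigendirections \emph{inside} the Krylov space, at the cost of shrinking the effective number of iterations from $m-1$ to $m-i$ and of a ``noise amplification'' factor that is exactly the $\delta^{-2(i-1)}$ in the statement. Normalize so that $\lambda_1(A)=1$, $\lambda_n(A)=0$, and write $b=\sum_j \tilde b_j\varphi_j$. By the max--min characterization (\ref{eqn:maxmin}), it suffices to exhibit an $i$-dimensional subspace $U\subseteq\mathcal K_m(A,b)$ on which every Rayleigh quotient is close to $\lambda_i$. For $k=1,\dots,i$ set $R_k=\ell_k\,S$, where $\ell_k(x)=\prod_{l\le i,\,l\ne k}\frac{x-\lambda_l}{\lambda_k-\lambda_l}$ is the degree-$(i-1)$ Lagrange factor that kills the other eigenvalues of index $\le i$, and $S\in\mathcal P_{m-i}$ is a shifted Chebyshev polynomial normalized by $S(\lambda_i)=1$ and made as small as possible on the ``unwanted'' interval $[\lambda_n,\lambda_{i+1}]$; since $\deg R_k\le m-1$ we have $R_k(A)b\in\mathcal K_m(A,b)$, and $R_k(A)b=S(\lambda_k)\tilde b_k\varphi_k+\eta_k$ with $\eta_k=\sum_{j>i}\ell_k(\lambda_j)S(\lambda_j)\tilde b_j\varphi_j\in\operatorname{span}\{\varphi_{i+1},\dots,\varphi_n\}$. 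Decomposing $x\in U:=\operatorname{span}\{R_1(A)b,\dots,R_i(A)b\}$ into its $\operatorname{span}\{\varphi_1,\dots,\varphi_i\}$ part $x_{\le i}$ and its complement $x_{>i}$, using $x^{T}Ax\ge\lambda_i\|x_{\le i}\|^2+\lambda_n\|x_{>i}\|^2$, $\|x_{>i}\|^2\le(\textstyle\sum_k a_k^2)(\sum_k\|\eta_k\|^2)$, and $\|x_{\le i}\|^2\ge(\min_k S^2(\lambda_k)\tilde b_k^2)\sum_k a_k^2$, yields
$$\frac{\lambda_i-\lambda_i^{(m)}}{\lambda_i-\lambda_n}\le\frac{\sum_{k\le i}\|\eta_k\|^2}{\min_{k\le i}S^2(\lambda_k)\tilde b_k^2}\le\frac{i\,(2\delta)^{-2(i-1)}}{\min_{k\le i}\tilde b_k^2}\cdot\frac{\sum_{j>i}S^2(\lambda_j)\tilde b_j^2}{S^2(\lambda_i)},$$
where I used $|\ell_k(\lambda_j)|\le(2\delta)^{-(i-1)}$ for $j>i$ (because $|\lambda_k-\lambda_l|\ge 2\delta$ for $k\ne l\le i$) and that $S$, being monotone outside $[\lambda_n,\lambda_{i+1}]$, is smallest on $\{\lambda_1,\dots,\lambda_i\}$ at $\lambda_i$. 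Up to the harmless factor $(2\delta)^{-2(i-1)}=2^{-2(i-1)}\delta^{-2(i-1)}$ and the substitution $m-1\mapsto m-i$, the right-hand side is precisely the signal-to-noise quantity handled in the proof of Lemma~\ref{lm:upper_rel}, with $\lambda_1$ replaced by $\lambda_i$.

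The two estimates then follow by replaying the arguments already in the paper on this right-hand side. For the expectation bound, pass to the chi-square variables via (\ref{eqn:chi}), apply Cauchy--Schwarz $q$ times with $2p<2^q\le 4p$ to raise the gaps to the power $2^q$, split the spectrum at a threshold $\beta$ (a near term of size $O(1-\beta)$; a far term handled with $S$ the shifted Chebyshev, whose signal-to-noise ratio grows like $e^{2\sqrt{1-\beta}\,(m-i)}$ just as in Lemma~\ref{lm:upper_rel}), and use $\E\bigl[(\sum_{j>i}Y_j)^{1/4}\bigr]\,\E\bigl[(\min_{k\le i}Y_k)^{-1/4}\bigr]\lesssim n^{1/4}$, the negative quarter-moment of a minimum of $i$ independent $\chi^2_1$ variables being finite. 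Optimizing $\sqrt{1-\beta}=\tfrac1{m-i}\ln\bigl(\delta^{-(i-1)/2}n^{1/4}(m-i)^{2p}\bigr)$ balances the two contributions and produces $.068\,\ln^2\bigl(\delta^{-2(i-1)}n(m-i)^{8p}\bigr)/(m-i)^2$ for $m\ge 9+i$, $n\ge 100$. For the probabilistic bound one instead controls $\sum_{j>i}Y_j$ from above and $\min_{k\le i}Y_k$ from below directly with the Chernoff bounds of Proposition~\ref{prop:cher}, so that $\sum_{j>i}Y_j/\min_{k\le i}Y_k\lesssim n$ with probability $1-o(1/n)$ (the minimum over $i$ terms costing only a union bound), and optimizes $\beta$ after a single Cauchy--Schwarz step; the weaker exponent is what turns $\delta^{-2(i-1)}$ and $n(m-i)^{8p}$ into $\delta^{-2(i-1)/3}$ and $n(m-i)^{2/3}$ and changes the constant to $.571$.

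The main obstacle is the reduction in the first paragraph: converting ``$R_k(A)b$ is nearly parallel to $\varphi_k$'' into a genuine lower bound on the $i$-th Ritz value. In contrast to the $i=1$ case one cannot simply augment by $\operatorname{span}\{\varphi_1,\dots,\varphi_{i-1}\}$, since those eigenvectors need not lie in $\mathcal K_m(A,b)$; one has to work with the $i$-dimensional subspace $U$ above and argue that its minimal Rayleigh quotient is $\lambda_i-O\bigl(\max_k\|\eta_k\|^2/(S^2(\lambda_k)\tilde b_k^2)\bigr)$. This is precisely the step that forces the eigenvalue-gap hypothesis: if any two of $\lambda_1,\dots,\lambda_i$ coalesce, the Lagrange deflation factors $\ell_k$ blow up and the construction collapses, which is exactly what $\delta^{-2(i-1)}$ records. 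It is also why the negative moments of the $\tilde b_k^2$, and hence the dimension $n$, survive inside the logarithm. Once the reduction is in place, everything else is a bookkeeping variant of the proof of Lemma~\ref{lm:upper_rel}.
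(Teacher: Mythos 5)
There is a genuine gap, and it sits exactly at the step you identify as the main obstacle. Your lower bound for the Rayleigh quotient on $U$ uses $x^TAx\ge\lambda_i\|x_{\le i}\|^2+\lambda_n\|x_{>i}\|^2$, which discards the actual eigenvalues of the noise components: every $j>i$ is penalized with the full weight $\lambda_i-\lambda_n$, no matter how close $\lambda_j$ is to $\lambda_i$. Consequently your master inequality
$$\frac{\lambda_i-\lambda_i^{(m)}}{\lambda_i-\lambda_n}\;\le\;\frac{i\,(2\delta)^{-2(i-1)}}{\min_{k\le i}\tilde b_k^2}\cdot\frac{\sum_{j>i}S^2(\lambda_j)\tilde b_j^2}{S^2(\lambda_i)}$$
is \emph{not} the quantity handled in Lemma \ref{lm:upper_rel}: that quantity carries the weights $(1-\lambda_j)$ in the numerator and the full sum $\sum_j y_jQ^2(\lambda_j)$ in the denominator, and it is precisely those weights that make the near region contribute only $O(1-\beta)$ after the split at $\beta$. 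In your inequality the near region has no small factor: for $\lambda_j$ just below $\lambda_i$ one has $S(\lambda_j)\approx S(\lambda_i)=1$, and the hypotheses place no restriction on $\lambda_i-\lambda_{i+1}$ (your $\delta$, like the paper's, only involves gaps among $\lambda_1,\dots,\lambda_i$), so there may be $\Theta(n)$ such eigenvalues. Concretely, if half of $\lambda_{i+1},\dots,\lambda_n$ sit at $\lambda_i-10^{-9}$ and the rest at $\lambda_n$, then with high probability your right-hand side is of order $\sum_{j\,\mathrm{near}}Y_j/\min_{k\le i}Y_k\sim n$, while the theorem asserts an error of order $\ln^2(\cdot)/(m-i)^2$; no degree-$(m-i)$ choice of $S$ can repair this, since $S$ cannot equal $1$ at $\lambda_i$ and be small on $[\lambda_n,\lambda_{i+1}]$ when $\lambda_{i+1}$ is essentially at $\lambda_i$. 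So paragraph two's plan (``a near term of size $O(1-\beta)$'') is inconsistent with the reduction of paragraph one. The subspace construction itself (Lagrange deflation at $\lambda_1,\dots,\lambda_i$, the bound $|\ell_k(\lambda_j)|\le(2\delta)^{-(i-1)}$, linear independence when $\tilde b_k\ne0$) is fine; what is missing is a weighted version of the Rayleigh-quotient bound, e.g. $\lambda_i-\frac{x^TAx}{x^Tx}\le\frac{\sum_{j>i}(\lambda_i-\lambda_j)c_j^2}{\|x\|^2}$ with $c_j$ the eigencoefficients of $x$ (the terms $j<i$ only help), and only then the split at $\beta$; with that repair your route could plausibly be completed, at the cost of additional $i$-dependent constants from $\sum_k\|\eta_k\|^2$ and from the negative moment of $\min_{k\le i}Y_k$.

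For contrast, the paper avoids a trial subspace altogether: it uses the exact variational description of the $i$-th Ritz value, $\lambda_i^{(m)}=\max_{P\in\mathcal P_{m-i}}\frac{\sum_j b_j^2P^2(\lambda_j)\phi_i(\lambda_j)\lambda_j}{\sum_j b_j^2P^2(\lambda_j)\phi_i(\lambda_j)}$ with $\phi_i(x)=\prod_{k=1}^{i-1}\big(\lambda_k^{(m)}-x\big)^2$ vanishing at the previously computed Ritz values, and then a pigeonhole argument: the $i$ disjoint intervals of radius $\delta(\lambda_1-\lambda_n)$ about $\lambda_1,\dots,\lambda_i$ cannot all contain one of the $i-1$ Ritz values, so some $\lambda_{j^*}$ is $\delta$-separated from all of them and serves as the signal term, with $\phi_i$ evaluated there costing only $\delta^{-(i-1)/2}$ after the quarter-power. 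Because this keeps both the weights and the full denominator, the machinery of Lemma \ref{lm:upper_rel} applies essentially verbatim, which is what produces the stated constants.
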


\begin{proof}
	As in previous cases, it suffices to prove the theorem for matrices $A$ with $\lambda_i(A) =1$ and $\lambda_n(A) = 0$ (if $\lambda_i = \lambda_n$, we are done). Similar to the polynomial representation of $\lambda_1^{(m)}(A,b)$, the Ritz value $\lambda_i^{(m)}(A,b)$ corresponds to finding the polynomial in $\mathcal{P}_{m-1}$ with zeros $\lambda_k^{(m)}$, $k = 1,...,i-1$, that maximizes the corresponding Rayleigh quotient. For the sake of brevity, let $\phi_i(x) = \prod_{k=1}^{i-1} (\lambda_k^{(m)} - x)^2$.  Then $\lambda_i^{(m)}(A,b)$ can be written as
	$$ \lambda_i^{(m)}(A,b) = \max_{P \in \mathcal{P}_{m-i}} \frac{\sum_{j=1}^n b_j^2 P^2(\lambda_j) \phi_i(\lambda_j) \lambda_j   }{\sum_{j=1}^n b_j^2 P^2(\lambda_j) \phi_i(\lambda_j) },$$
	and, therefore, the error is bounded above by
	$$	 \lambda_i(A) - \lambda_i^{(m)}(A,b) \le
	\max_{P \in \mathcal{P}_{m-i}} \frac{\sum_{j=i+1}^n b_j^2 P^2(\lambda_j)  \phi_i(\lambda_j) (1 -\lambda_j) }{\sum_{j=1}^n b_j^2 P^2(\lambda_j) \phi_i(\lambda_j) } .$$
	The main idea of the proof is very similar to that of Lemma \ref{lm:upper_rel}, paired with a pigeonhole principle. The intervals $[\lambda_j(A) - \delta \lambda_1 , \lambda_j(A) +\delta\lambda_1]$, $j =1,...,i$, are disjoint, and so there exists some index $j^*$ such that the corresponding interval does not contain any of the Ritz values $\lambda_k^{(m)}(A,b)$, $k = 1,...,i-1$. We begin by bounding expected relative error. As in the proof of Lemma \ref{lm:upper_rel}, by integrating over chi-square random variables and using Cauchy-Schwarz, we have
	\begin{eqnarray*}
		\E \left[ \big( \lambda_i - \lambda_i^{(m)}\big)^p \right]^{\tfrac{1}{p}} &\le& \min_{P \in \mathcal{P}_{m-i}} \left[ \int_{[0,\infty)^n} \left( \frac{\sum_{j=i+1}^n y_j P^2(\lambda_j)  \phi_i(\lambda_j) (1 -\lambda_j)^{2^q}}{\sum_{j=1}^n y_j P^2(\lambda_j)\phi_i(\lambda_j)  } \right)^{\frac{p}{2^q}} f_Y(y) \, dy \right]^{\frac{1}{p}} \\
		&\le& \min_{P \in \mathcal{P}_{m-i}} \left[ \int_{[0,\infty)^n} \left( \frac{\sum_{j: \lambda_j < \beta} y_j P^2(\lambda_j) \phi_i(\lambda_j) (1 -\lambda_j)^{2^q} }{\sum_{j=1}^n y_j P^2(\lambda_j)\phi_i(\lambda_j)  } \right)^{\frac{p}{2^q}} f_Y(y) \, dy \right]^{\frac{1}{p}}  \\
		&&+ \quad  \left[ \int_{[0,\infty)^n} \left( \frac{\sum_{j: \lambda_j \in [\beta,1]} y_j P^2(\lambda_j)  \phi_i(\lambda_j) (1 -\lambda_j)^{2^q} }{\sum_{j=1}^n y_j P^2(\lambda_j)\phi_i(\lambda_j)  } \right)^{\frac{p}{2^q}} f_Y(y) \, dy \right]^{\frac{1}{p}} \\
	\end{eqnarray*}
	for $q \in \mathbb{N}$, $2p < 2^q \le 4 p$, and any $ \beta \in (0,1)$. The second term on the right-hand side is bounded above by $1 - \beta$, and the integrand of the first term is bounded above by	
	\begin{eqnarray*}
		\left( \frac{\sum_{j: \lambda_j < \beta} y_j P^2(\lambda_j) \phi_i(\lambda_j) (1 -\lambda_j)^{2^q}  }{\sum_{j=1}^n y_j P^2(\lambda_j)\phi_i(\lambda_j)  } \right)^{\frac{p}{2^q}}  &\le&  \left( \frac{\sum_{j: \lambda_j < \beta} y_j P^2(\lambda_j)  \phi_i(\lambda_j) (1 -\lambda_j)^{2^q} }{\sum_{j=1}^n y_j P^2(\lambda_j)\phi_i(\lambda_j)  } \right)^{\frac{1}{4}}  \\
		&\le& \max_{x \in [0,\beta]} \frac{|P(x)|^{1/2} \phi^{1/4}_i(x) (1 - x)^{2^{q-2}}}{ |P(\lambda_{j^*})|^{1/2} \phi^{1/4}_i(\lambda_{j^*}) } \left(\frac{\sum_{i:\lambda_i<\beta} y_i}{y_{j^*}}\right)^{\frac{1}{4}}. \\
	\end{eqnarray*}
	By replacing the minimizing polynomial in $\mathcal{P}_{m-i}$ by $ T_{m-i}\left(\frac{2}{\beta} x - 1\right)$, the maximum is achieved at $x =0$, and, by the monotonicity of $T_{m-i}$ on $[1,\infty)$,
	$$T_{m-i}\left(\frac{2}{\beta} \lambda_{j^*} -1\right) 
	\ge	T_{m-i}\left(\frac{2}{\beta} -1\right) 
	\ge \frac{1}{2} \exp \left\{2 \sqrt{1-\beta} \, (m-i) \right\}.$$
	In addition,
	$$ \frac{\phi^{1/4}_i(0) }{\phi^{1/4}_i(\lambda_{j^*})} = \prod_{k=1}^{i-1}  \left| \frac{\lambda_k^{(m)}}{\lambda_k^{(m)} - \lambda_{j^*}} \right|^{1/2}  \le \delta^{-(i-1)/2}.$$
	We can now bound the $p$-norm by
	$$  \E \left[ \left( \lambda_i - \lambda_i^{(m)}\right)^p \right]^{\tfrac{1}{p}} \le  \left[\frac{2^{1/4} \Gamma(1/4)}{\Gamma(1/2)} \frac{n^{1/4}}{\delta^{(i-1)/2}  } \right]^{1/p} e^{- \gamma (m-i) /p} + \gamma^2,$$
	where $\gamma = \sqrt{1-\beta}$. Setting $\gamma = \frac{p}{m-i} \ln \left( \delta^{-(i-1)/2p}n^{1/4p} (m-i)^2 \right)$ (assuming $\gamma<1$, otherwise our bound is already greater than one, and trivially holds), we obtain
	\begin{eqnarray*}
		\E \left[ \left( \lambda_i - \lambda_i^{(m)}\right)^p \right]^{\tfrac{1}{p}}
		&\le&   \frac{\left(\frac{2^{1/4} \Gamma(1/4)}{\Gamma(1/2)}\right)^{1/p} +  \frac{1}{16} \ln^2 \left(\delta^{-2(i-1)}n(m-i)^{8p} \right) }{(m-i)^2} \\
		&\le& .068 \, \frac{ \ln^2 \left(\delta^{-2(i-1)} n(m-i)^{8p}\right) }{ (m-i)^2},
	\end{eqnarray*}
	for $m \ge 9+i$, $n \ge 100$. This completes the proof of the expected error estimate. We now focus on the probabilistic estimate. We have
	\begin{eqnarray*}
		\lambda_i(A) - \lambda_i^{(m)}(A,b) &\le& \min_{P \in \mathcal{P}_{m-i}} \frac{\sum_{j=i+1}^n Y_j P^2(\lambda_j) \phi_i(\lambda_j) (1- \lambda_j)}{\sum_{j=1}^n Y_j P^2(\lambda_j) \phi_i(\lambda_j)} \\
		&\le& \min_{P \in \mathcal{P}_{m-i}} \max_{x \in [0 , \beta]} \frac{P^2(x)  \phi_i(x) (1-x)}{P^2(\lambda_{j^*}) \phi_i(\lambda_{j^*})} \frac{\sum_{j:\lambda_j<\beta} Y_j}{Y_{j^*}} + (1-\beta) \\
		&\le& \delta^{-2(i-1)} \, T_{m-i}^{-2} \left( \frac{2}{\beta} -1 \right) \frac{\sum_{j:\lambda_j<\beta} Y_j}{Y_{j^*}} + (1-\beta) \\
		&\le& 4 \, \delta^{-2(i-1)} \exp \{ -4 \sqrt{1-\beta} (m-i)\} \frac{\sum_{j:\lambda_j<\beta} Y_j}{Y_{j^*}} + (1-\beta).
	\end{eqnarray*}
	By Proposition \ref{prop:cher},
	\begin{eqnarray*}
		\bP\left[\frac{\sum_{j:\lambda_j<\beta} Y_j}{Y_{j^*}} \ge n^{3.02} \right] &\le& \bP\left[ Y_{j^*} \le n^{-2.01} \right] + \bP\left[ \sum_{j\ne j^*} Y_j \ge n^{1.01} \right] \\
		&\le& \left(e/n^{2.01}\right)^{1/2} + \left( n^{.01}e^{1-n^{.01}}\right)^{(n-1)/2} = o(1/n).
	\end{eqnarray*}
	Let $\sqrt{1-\beta} = \frac{ \ln \left( \delta^{-2(i-1)} n^{3.02}  (m-i)^{2}\right)}{4 (m-i)}$. Then, with probability $1-o(1/n)$,
	$$ \lambda_i(A) - \lambda_i^{(m)}(A) \le
	\frac{4}{(m-i)^2} + \frac{\ln^2\left( \delta^{-2(i-1)} n^{3.02}  (m-i)^{2}\right)}{16(m-i)^2}.$$
	The $4/(m-i)^2$ term is dominated by the log term as $n$ increases, and, therefore, with probability $1-o(1/n)$,
	$$\lambda_i(A) - \lambda_i^{(m)}(A) \le .571 \; \frac{\ln^2\left( \delta^{-2(i-1)/3} n (m-i)^{2/3} \right)}{(m-i)^2}.$$
	This completes the proof.
\end{proof}

For typical matrices with no repeated eigenvalues, $\delta$ is usually a very low degree polynomial in $n$, and, for $i$ constant, the estimates for $\lambda_i$ are not much worse than that of $\lambda_1$. In addition, it seems likely that, given any matrix $A$, a small random perturbation of $A$ before the application of the Lanczos method will satisfy the condition of the theorem with high probability, and change the eigenvalues by a negligible amount. Of course, the bounds from Theorem \ref{thm:arb_eig} for maximal eigenvalues $\lambda_i$ also apply to minimal eigenvalues $\lambda_{n-i}$.

Next, we make use of Theorem \ref{thm:main} to produce error estimates for the condition number of a symmetric positive definite matrix. Let $\kappa(A) = \lambda_1(A) / \lambda_n(A)$ denote the condition number of a matrix $A \in \mathcal{S}^n_{++}$ and $\kappa^{(m)}(A,b) = \lambda_1^{(m)}(A,b) / \lambda_m^{(m)}(A,b)$ denote the condition number of the tridiagonal matrix $T_m \in \mathcal{S}^m_{++}$ resulting from $m$ iterations of the Lanczos method applied to a matrix $A \in \mathcal{S}^{n}_{++}$ with initial vector $b$. Their difference can be written as
\begin{eqnarray*}
	\kappa(A) - \kappa^{(m)}(A,b) &=& \frac{\lambda_1}{\lambda_n} - \frac{\lambda_1^{(m)} }{\lambda_m^{(m)}} \,=\, \frac{\lambda_1\lambda_m^{(m)} - \lambda_n \lambda_1^{(m)} }{ \lambda_n \lambda_m^{(m)}} = \frac{ \lambda_m^{(m)} \left( \lambda_1 - \lambda_1^{(m)} \right) + \lambda_1^{(m)} \left(\lambda_m^{(m)} - \lambda_n \right) }{ \lambda_n \lambda_m^{(m)}} \\
	&=& \left(\kappa(A) -1 \right) \left[ \frac{\lambda_1 - \lambda_1^{(m)}}{\lambda_1-\lambda_n} +\kappa^{(m)}(A,b) \frac{\lambda_m^{(m)} - \lambda_n}{\lambda_1-\lambda_n} \right],
\end{eqnarray*}
which leads to the bounds
$$  \frac{\kappa(A) -\kappa^{(m)}(A,b)}{\kappa(A)} \le  \frac{\lambda_1 - \lambda_1^{(m)}}{\lambda_1-\lambda_n} +\kappa(A) \frac{\lambda_m^{(m)} - \lambda_n}{\lambda_1-\lambda_n}$$
and
$$\frac{\kappa(A) -\kappa^{(m)}(A,b)}{\kappa^{(m)}(A,b)} \ge  (\kappa(A) -1) \frac{\lambda_m^{(m)} - \lambda_n}{\lambda_1-\lambda_n}.$$
Using Theorem \ref{thm:main} and Minkowski's inequality, we have the following corollary.

\begin{corollary}\label{cor:cond_exp}
	$$\sup_{\substack{A \in \mathcal{S}^n_{++} \\ \kappa(A) =\bar \kappa}} \bP_{b \sim \mathcal{U}(S^{n-1})} \left[  \frac{\kappa(A) -\kappa^{(m)}(A,b)}{\kappa^{(m)}(A,b)} \ge (1-o(1)) (\bar \kappa -1)\right]  \ge 1 - o(1/n)$$
	for $m = o(\ln n)$,
	$$\sup_{\substack{A \in \mathcal{S}^n_{++} \\ \kappa(A) =\bar \kappa}}   \bP_{b \sim \mathcal{U}(S^{n-1})} \left[  \frac{\kappa(A) -\kappa^{(m)}(A,b)}{\kappa^{(m)}(A,b)} \ge  .015  \, \frac{(\bar \kappa -1) \ln^2 n}{ m^2 \ln^2 \ln n} \right]  \ge 1 - o(1/n)$$
	for $m = \Theta(\ln n)$,
	$$ \sup_{\substack{A \in \mathcal{S}^n_{++} \\ \kappa(A) =\bar \kappa}}   \bP_{b \sim \mathcal{U}(S^{n-1})} \left[  \frac{\kappa(A) -\kappa^{(m)}(A,b)}{\kappa^{(m)}(A,b)}\ge 1.08 \, \frac{\bar \kappa -1}{m^2} \right]  \ge 1 - o(1/n)$$
	for $m = o\left(n^{1/2} \ln^{-1/2} n\right)$ and $\omega(1)$, and
	$$  \sup_{\substack{A \in \mathcal{S}^n_{++} \\ \kappa(A) =\bar \kappa}} \E_{b \sim \mathcal{U}(S^{n-1})} \left[ \left( \frac{\kappa(A) -\kappa^{(m)}(A,b)}{\kappa(A)} \right)^p \right]^{1/p} \le .068\, (\bar \kappa +1) \,  \frac{ \ln^2 \left(n(m-1)^{8p}\right) }{ (m-1)^2}$$
	for $n\ge 100$, $m \ge 10$, $p \ge 1$.
\end{corollary}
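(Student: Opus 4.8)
The plan is to obtain all four inequalities from the two algebraic bounds displayed immediately before the corollary, combined with Theorem \ref{thm:main} and the remark following it that every estimate there holds verbatim for the smallest eigenvalue $\lambda_n$: indeed $\mathcal{K}_m(A,b) = \mathcal{K}_m(-A,b)$, and the Lanczos error $(\lambda_m^{(m)}(A,b) - \lambda_n(A))/(\lambda_1(A) - \lambda_n(A))$ of $A$ equals the error $(\lambda_1(-A) - \lambda_1^{(m)}(-A,b))/(\lambda_1(-A) - \lambda_n(-A))$ of $-A$, so the worst-case behaviour of $\lambda_1$ transfers to $\lambda_n$.

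For the three probabilistic lower bounds, start from
$$\frac{\kappa(A) - \kappa^{(m)}(A,b)}{\kappa^{(m)}(A,b)} \ge (\kappa(A) - 1)\,\frac{\lambda_m^{(m)}(A,b) - \lambda_n(A)}{\lambda_1(A) - \lambda_n(A)}.$$
Fix $\bar\kappa > 1$ (the case $\bar\kappa = 1$ is trivial, both sides vanishing) and a regime for $m$. By Theorem \ref{thm:main} applied to $\lambda_n$ (via $-A$) there is a matrix $B \in \mathcal{S}^n$ with $\lambda_n(B) < \lambda_1(B)$ whose error $(\lambda_m^{(m)}(B,b) - \lambda_n(B))/(\lambda_1(B) - \lambda_n(B))$ is at least $1-o(1)$, resp. $.015\ln^2 n/(m^2\ln^2\ln n)$, resp. $1.08/m^2$, with probability $1-o(1/n)$. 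Apply to $B$ the increasing affine map $x \mapsto 1 + (\bar\kappa - 1)(x - \lambda_n(B))/(\lambda_1(B) - \lambda_n(B))$, which has the form $\alpha x + \beta$ with $\alpha > 0$; the resulting matrix $A$ lies in $\mathcal{S}^n_{++}$ with spectrum in $[1,\bar\kappa]$, so $\kappa(A) = \bar\kappa$, while the Krylov subspace and the relative error above are unchanged. Multiplying the guaranteed lower bound on the relative error by $\bar\kappa - 1$ yields exactly the three claimed statements; since only a single bad event is involved, no union bound is needed and the probability remains $1-o(1/n)$.

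For the $L^p$ estimate, start instead from
$$\frac{\kappa(A) - \kappa^{(m)}(A,b)}{\kappa(A)} \le \frac{\lambda_1 - \lambda_1^{(m)}}{\lambda_1 - \lambda_n} + \kappa(A)\,\frac{\lambda_m^{(m)} - \lambda_n}{\lambda_1 - \lambda_n},$$
take the $L^p(\mathcal{U}(S^{n-1}))$ norm of both sides, and apply Minkowski's inequality to split the right-hand side as the $p$-norm of $(\lambda_1 - \lambda_1^{(m)})/(\lambda_1 - \lambda_n)$ plus $\bar\kappa$ times the $p$-norm of $(\lambda_m^{(m)} - \lambda_n)/(\lambda_1 - \lambda_n)$. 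Each of these two $p$-norms is at most $.068\,\ln^2(n(m-1)^{8p})/(m-1)^2$ for $n \ge 100$, $m \ge 10$, $p \ge 1$: the first by Lemma \ref{lm:upper_rel} (equivalently the last line of Theorem \ref{thm:main}), the second by the same lemma applied to $-A$. Summing the two contributions produces the factor $1 + \bar\kappa$, which is the asserted bound. The proof is short because all the analytic work is already contained in Theorem \ref{thm:main} and the algebraic identity for $\kappa(A) - \kappa^{(m)}(A,b)$; the one point requiring care is that the extremal constructions behind the lower bounds of Theorem \ref{thm:main} can be realized inside the constraint set $\{A \in \mathcal{S}^n_{++} : \kappa(A) = \bar\kappa\}$, and this is handled cleanly by the affine invariance of the Krylov subspace and of the relative error, which lets us rescale any spectrum with distinct extremes onto $[1,\bar\kappa]$.
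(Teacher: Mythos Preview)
Your proof is correct and follows exactly the approach the paper intends: combine the two algebraic inequalities for $\kappa(A)-\kappa^{(m)}(A,b)$ with Theorem~\ref{thm:main} (applied to $\lambda_n$ via affine invariance for the lower bounds) and Minkowski's inequality for the $L^p$ upper bound. The only detail you add beyond the paper's one-line justification is the explicit affine rescaling onto $[1,\bar\kappa]$ to meet the constraint $\kappa(A)=\bar\kappa$, which is the right way to make the argument precise.
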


%
%

As previously mentioned, it is not possible to produce uniform bounds for the relative error of $\kappa^{(m)}(A,b)$, and so some dependence on $\kappa(A)$ is necessary.

\section{Experimental Results}

In this section, we present a number of experimental results that illustrate the error of the Lanczos method in practice. We consider:
\begin{itemize}
	\item eigenvalues of 1D Laplacian with Dirichlet boundary conditions $\Lambda_{lap}=  \{2 + 2 \cos( i\pi/(n+1))\}_{i=1}^n$,
	\item a uniform partition of $[0,1]$, $\Lambda_{unif} = \{(n-i)/(n-1)\}_{i=1}^n$,
	\item eigenvalues from the semi-circle distribution, $\Lambda_{semi} = \{\lambda_i\}_{i=1}^n$, where \\
	$1/2 +  \big(\lambda_i \sqrt{1 - \lambda_i^2} + \arcsin \lambda_i \big)/\pi = (n-i)/(n-1)$, $i = 1,...,n$,
	\item eigenvalues corresponding to Lemma \ref{lm:lowerbound_mix}, $\Lambda_{log}= \{ 1 - \left[(n+1-i)/n\right]^{\frac{\ln \ln n}{\ln n}}\}_{i=1}^n$.
\end{itemize}

\begin{figure}
	\begin{center}
		\subfloat[Plot, $\Lambda_{lap}$]{\includegraphics*[width=2.5 in, height = 1.7 in]{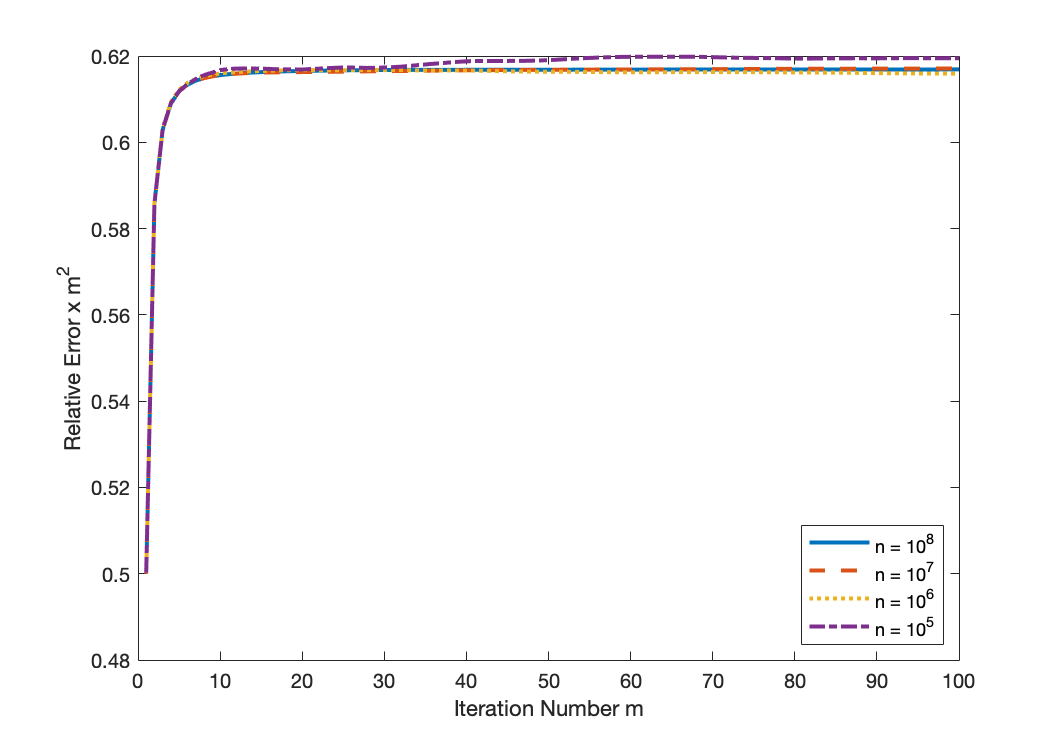}} \subfloat[Box Plot, $\Lambda_{lap}$]{\includegraphics*[width= 2.5 in, height = 1.7 in]{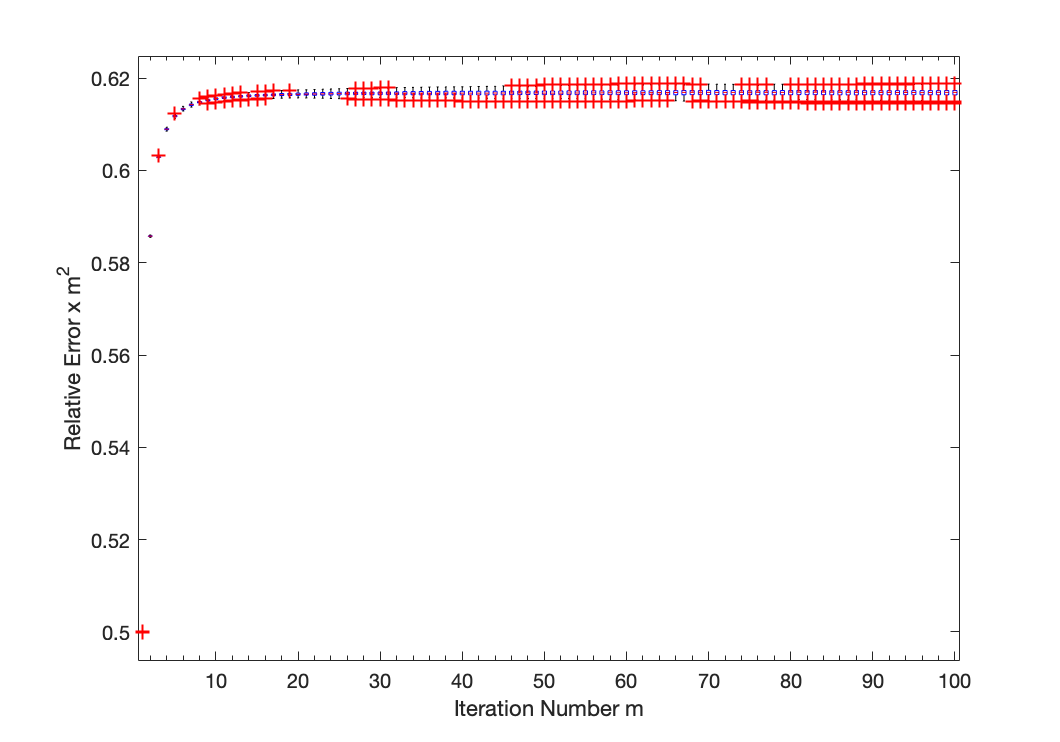}}\\
		\subfloat[Plot, $\Lambda_{unif}$]{\includegraphics*[width=2.5 in, height = 1.7 in]{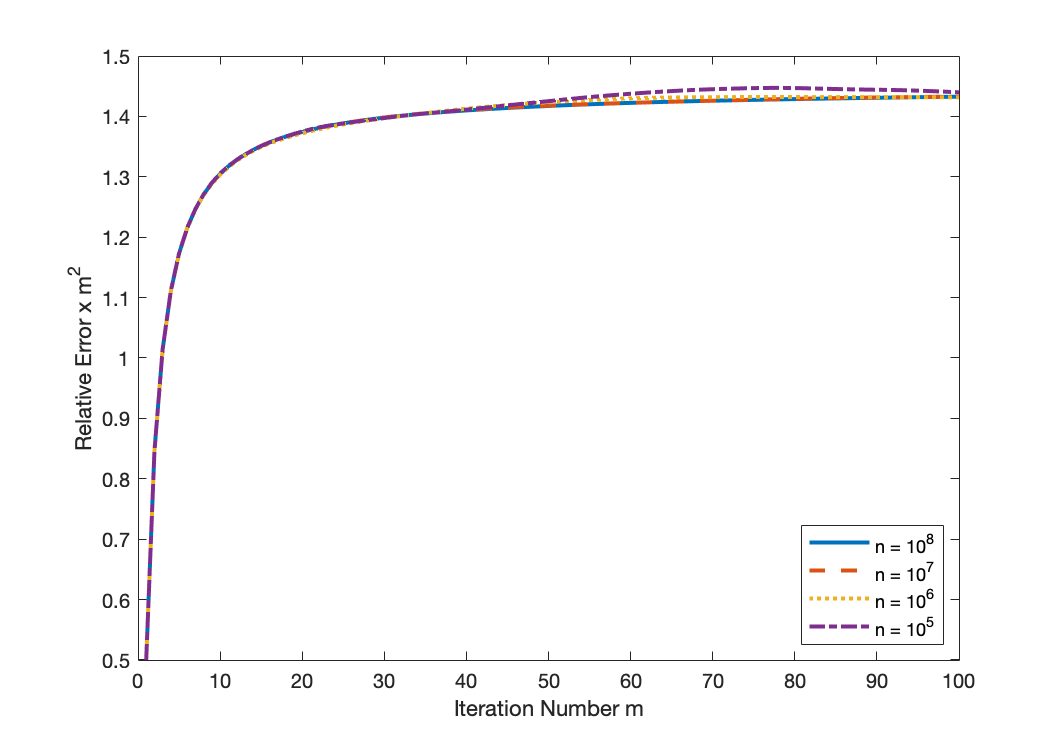}} \subfloat[Box Plot, $\Lambda_{unif}$ ]{\includegraphics*[width=2.5 in, height = 1.7 in]{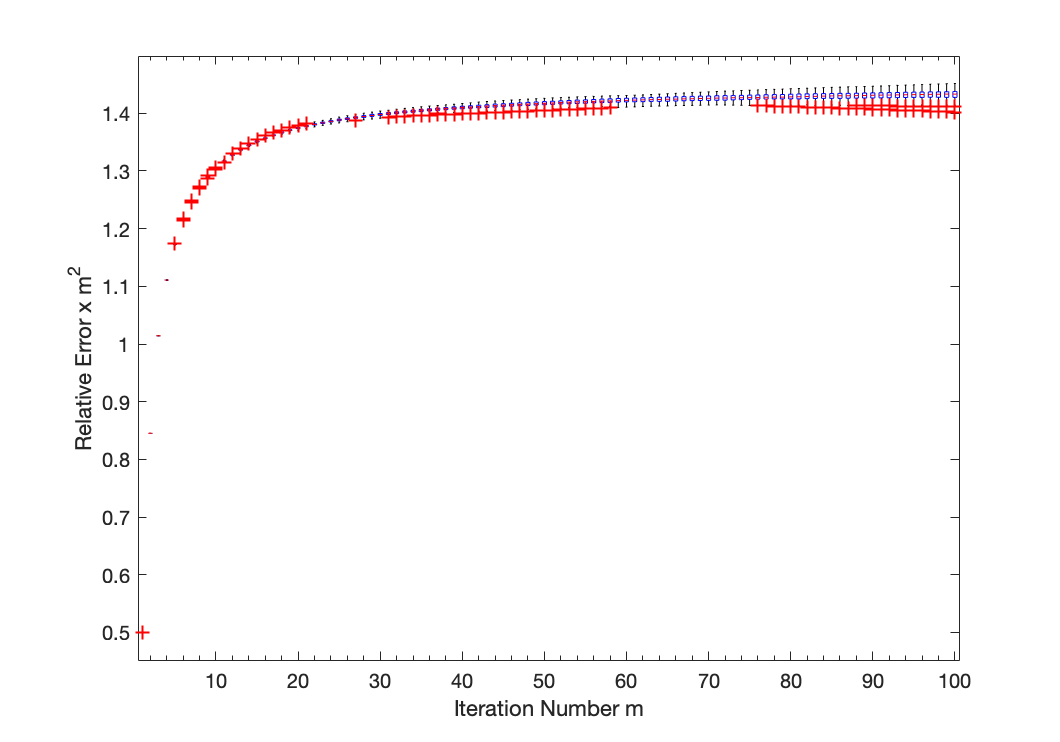}} \\
		\subfloat[Plot, $\Lambda_{semi}$]{\includegraphics*[width=2.5 in, height = 1.7 in]{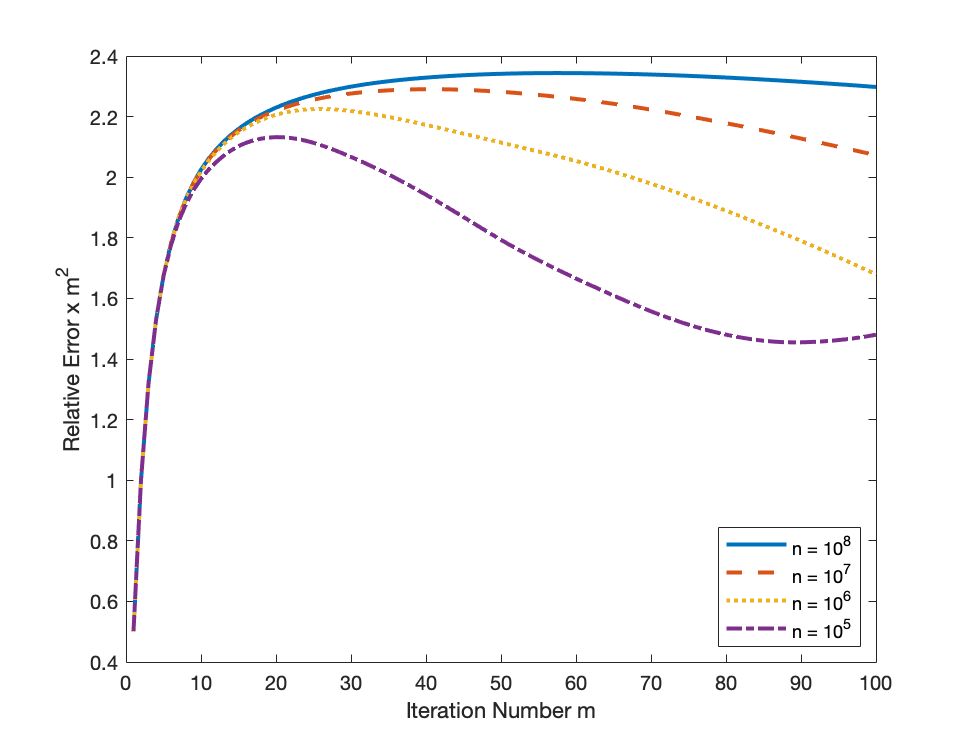}}  \subfloat[Box Plot, $\Lambda_{semi}$]{\includegraphics*[width= 2.5 in, height = 1.7 in]{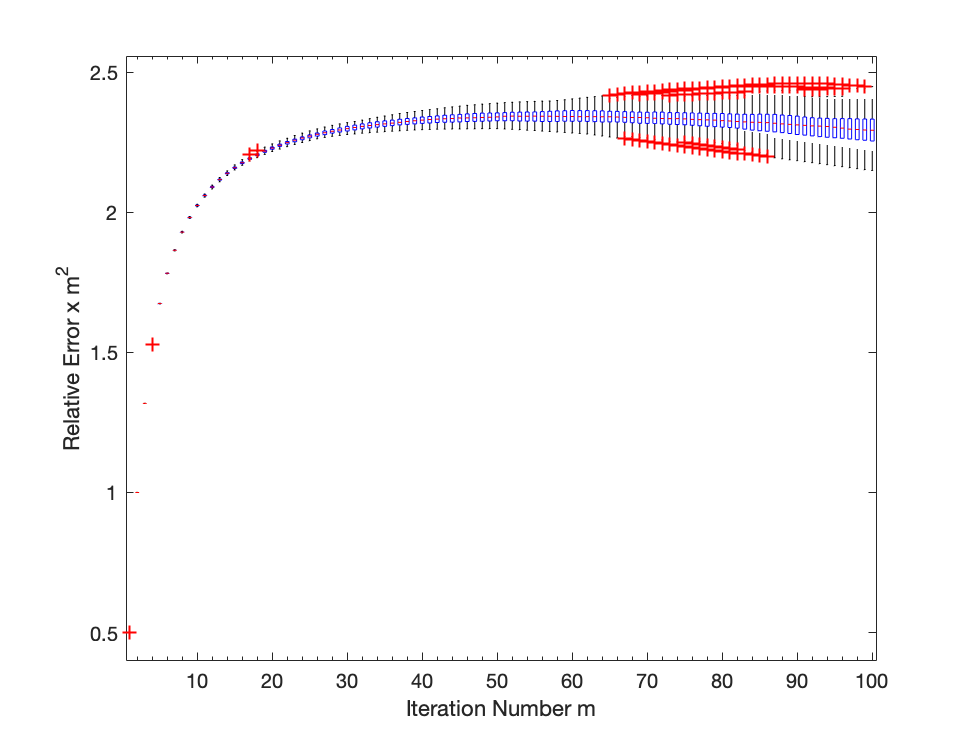}} \\	
		\subfloat[Plot, $\Lambda_{log}$]{\includegraphics*[width=2.5 in, height = 1.7 in]{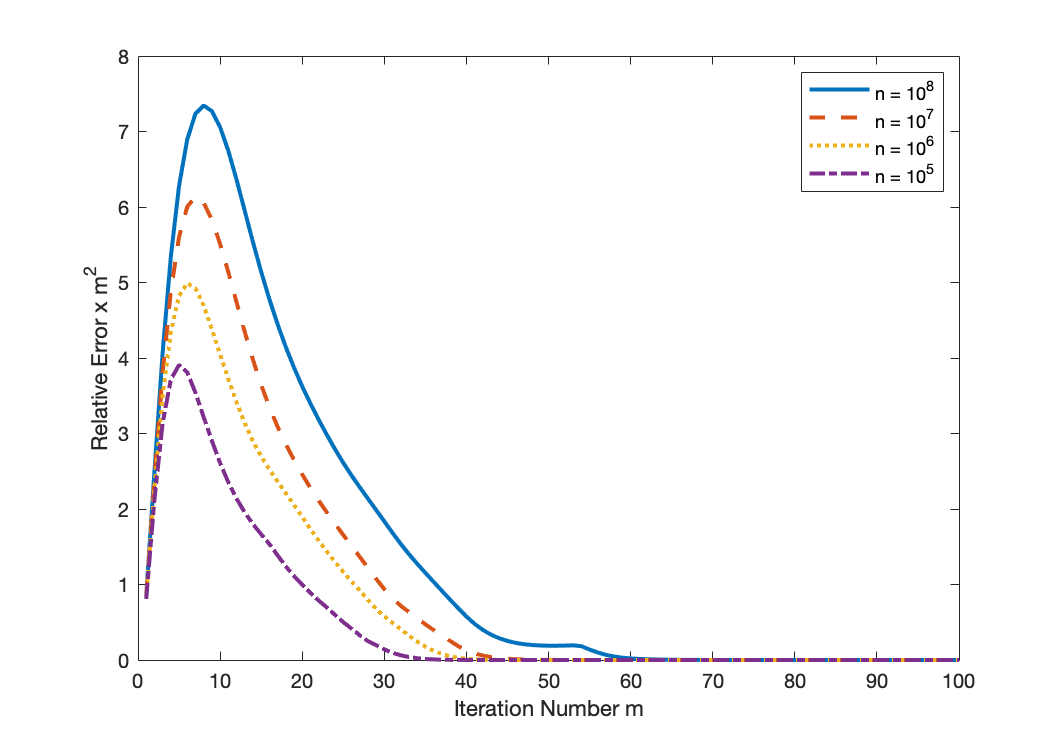}} \subfloat[Box Plot, $\Lambda_{log}$]{\includegraphics*[width= 2.5 in, height = 1.7 in]{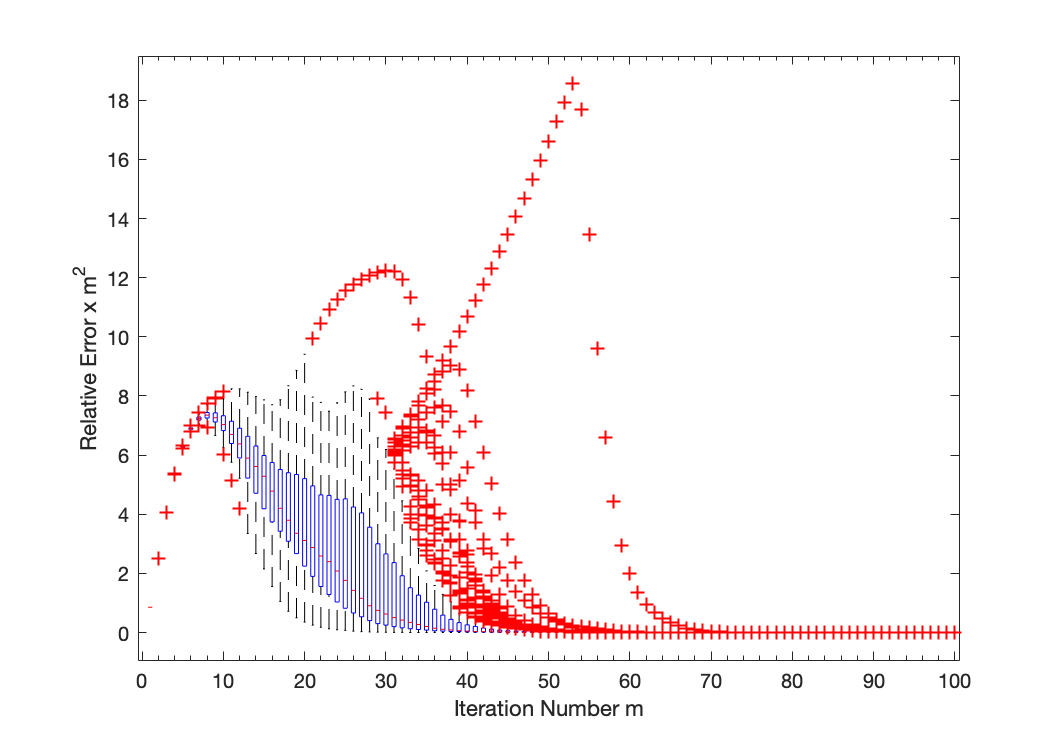}}\\
		
		\caption{Plot and box plot of relative error times $m^2$ vs iteration number $m$ for $\Lambda_{lap}$, $\Lambda_{unif}$, $\Lambda_{semi}$, and $\Lambda_{log}$. The plot contains curves for each dimension $n$ tested. Each curve represents the empirical average relative error for each value of $m$, averaged over $100$ random initializations. The box plot illustrates the variability of relative error for $n = 10^8$. For a given $m$, the $25^{th}$ and $75^{th}$ percentile of the values, denoted by $q_1$ and $q_3$, are the bottom and top of the corresponding box, and the red line in the box is the median. The whiskers extend to the most extreme points in the interval $[q_1 - 1.5(q_3 -q_1), q_3 + 1.5(q_3 - q_1)]$, and outliers not in this interval correspond to the '+' symbol.}
		\label{fig:reg}
	\end{center}
\end{figure}

For each one of these spectra, we perform tests for dimensions $n = 10^i$, $i = 5,6,7,8$. For each dimension, we generate $100$ random vectors $b\sim \mathcal{U}(S^{n-1})$, and perform $m = 100$ iterations of the Lanczos method on each vector. In Figure \ref{fig:reg}, we report the results of these experiments. In particular, for each spectrum, we plot the empirical average relative error $(\lambda_1-\lambda_1^{(m)} )/ (\lambda_1-\lambda_n)$ for each dimension as $m$ varies from $1$ to $100$. In addition, for $n = 10^8$, we present a box plot for each spectrum, illustrating the variability in relative error that each spectrum possesses.

In Figure \ref{fig:reg}, the plots of $\Lambda_{lap}$, $\Lambda_{unif}$, $\Lambda_{semi}$ all illustrate an empirical average error estimate that scales with $m^{-2}$ and has no dependence on dimension $n$ (for $n$ sufficiently large). This is consistent with the theoretical results of the paper, most notably Theorem \ref{thm:conv}, as all of these spectra exhibit suitable convergence of their empirical spectral distribution, and the related integral minimization problems all have solutions of order $m^{-2}$. In addition, the box plots corresponding to $n=10^8$ illustrate that the relative error for a given iteration number has a relatively small variance. For instance, all extreme values remain within a range of length less than $.01 \, m^{-2}$ for $\Lambda_{lap}$, $.1\, m^{-2}$ for $\Lambda_{unif}$, and $.4 \, m^{-2}$ for $\Lambda_{semi}$. This is also consistent with the convergence of Theorem \ref{thm:conv}. The empirical spectral distribution of all three spectra converge to shifted and scaled versions of Jacobi weight functions, namely, $\Lambda_{lap}$ corresponds to $\omega^{-1/2,-1/2}(x)$, $\Lambda_{unif}$ to $\omega^{0,0}(x)$, and $\Lambda_{semi}$ to $\omega^{1/2,1/2}(x)$. The limiting value of $m^2 (1-\xi(m))/2$ for each of these three cases is given by $j_{1,\alpha}^2/4$, where $j_{1,\alpha}$ is the first positive zero of the Bessel function $J_\alpha(x)$, namely, the scaled error converges to $\pi^2/16 \approx .617$ for $\Lambda_{lap}$, $\approx 1.45$ for $\Lambda_{unif}$, and $\pi^2/4 \approx 2.47$ for $\Lambda_{semi}$. Two additional properties suggested by Figure \ref{fig:reg} are that the variance and the dimension $n$ required to observe asymptotic behavior both appear to increase with $\alpha$. This is consistent with the theoretical results, as Lemma \ref{lm:lowerbound_mix} (and $\Lambda_{log}$) results from considering $\omega^{\alpha,0}(x)$ with $\alpha$ as a function of $n$.

The plot of relative error for $\Lambda_{log}$ illustrates that relative error does indeed depend on $n$ in a tangible way, and is increasing with $n$ in what appears to be a logarithmic fashion. For instance, when looking at the average relative error scaled by $m^2$, the maximum over all iteration numbers $m$ appears to increase somewhat logarithmically ($\approx 4$ for $n = 10^5$, $\approx 5$ for $n = 10^6$, $\approx 6$ for $n = 10^7$, and $\approx 7$ for $n = 10^8$). In addition, the boxplot for $n=10^8$ illustrates that this spectrum exhibits a large degree of variability and is susceptible to extreme outliers. These numerical results support the theoretical lower bounds of Section 3, and illustrate that the asymptotic theoretical lower bounds which depend on $n$ do occur in practical computations.

\section*{Acknowledgments}

The author would like to thank Alan Edelman, Michel Goemans and Steven Johnson for interesting conversations on the subject, and Louisa Thomas for improving the style of presentation. The author was supported in part by ONR Research Contract N00014-17-1-2177.

{ \small 
	\bibliographystyle{plain}
	\bibliography{vector} }

\end{document}